\theoremstyle{plain}
\newtheorem{lemma}{Lemma}[section]
\newtheorem{theorem}[lemma]{Theorem}
\newtheorem{corollary}[lemma]{Corollary}
\newtheorem{proposition}[lemma]{Proposition}
\newtheorem{conjecture}[lemma]{Conjecture}
\newtheorem{themainconjecture}[lemma]{The Main Conjecture}
\newtheorem{question}[lemma]{Question}
\theoremstyle{remark}
\newtheorem{remark}[lemma]{Remark}
\newtheorem{example}[lemma]{Example}
\newtheorem{definition}[lemma]{Definition}
\DeclareMathOperator{\weight}{mon}
\DeclareMathOperator{\vagueweight}{wt}
\DeclareMathOperator{\contribution}{contribution}
\DeclareMathOperator{\Ch}{Ch}
\DeclareMathOperator{\id}{id}
\DeclareMathOperator{\Tr}{Tr}
\newcommand{\Moeb}{d}
\newcommand{\R}{\mathbb{R}}
\newcommand{\C}{\mathbb{C}}
\newcommand{\N}{\mathbb{N}}
\newcommand{\HH}{\mathcal{H}}
\newcommand{\QQ}{\mathbb{Q}}
\newcommand{\Sym}[1]{\mathfrak{S}_{#1}}
\newcommand{\newCh}{\widehat{\Ch}}
\newcommand{\V}{\mathcal{V}}
\newcommand{\F}{\mathcal{F}}
\newcommand{\loops}{\mathcal{L}}
\newcommand{\WC}{\mathcal{W}}
\newcommand{\BC}{\mathcal{B}}
\newcommand{\E}{\mathcal{E}}
\date{}
\author[M.~Dołęga]{Maciej Dołęga}
 \address{LIAFA, Universit\'e Paris 7, Case 7014, 75205 Paris Cedex 13,
France \newline \indent Instytut Matematyczny,
Uniwersytet Wrocławski,  \mbox{pl.\ Grunwaldzki~2/4,} 50-384
Wrocław, Poland}
\email{dolega@liafa.univ-paris-diderot.fr}
\author[V.~F\'eray]{Valentin F\'eray}
\address{LaBRI, Universit\'e Bordeaux 1, 351 cours de la Lib\'eration, 33 400
Talence, France}
\email{feray@labri.fr}
\author [P.~Śniady]{Piotr \'Sniady}
\address{
Wydział Matematyki i Informatyki, 
Uniwersytet im.~Adama Mickiewicza, 
Collegium Mathematicum,
Umultowska 87, 
61-614 Poznań, 
Poland, \newline \indent
Instytut Matematyczny, Polska Akademia Nauk, 
\mbox{ul.~\'Sniadec\-kich 8,} \linebreak 00-956 Warszawa, Poland
} 
\email{piotr.sniady@amu.edu.pl}
\title[Jack polynomials and orientability generating series of maps]{Jack polynomials \\ and orientability generating series of maps}
\begin{document}

\begin{abstract}
We study \emph{Jack characters}, 
which are the coefficients of the power-sum expansion of Jack symmetric functions with a suitable normalization. 
These quantities have been introduced by Lassalle who formulated some challenging conjectures about them. 
We conjecture existence of a weight on non-oriented 
maps (i.e., graphs drawn on non-oriented surfaces)
which allows to express 
any given Jack character as a weighted sum of some simple functions indexed by maps. 
We provide a candidate for this weight which gives a positive answer to our conjecture in some, 
but unfortunately not all, cases. 
In particular, it gives a positive answer for Jack characters specialized on Young diagrams of rectangular shape. 
This candidate weight attempts to measure,
in a sense, the non-orientability of a given map.
\end{abstract}

\subjclass[2010]{%
Primary   05E05; 
Secondary 
05C10, 
05C30, 
20C30  
}

\keywords{Jack polynomials, Jack characters, maps, topological aspects of graph theory}

\maketitle

\numberwithin{equation}{section}
\numberwithin{figure}{section}

\section{Introduction}

\subsection{Jack polynomials and Macdonald polynomials}
Jack \cite{Jack1970/1971}
introduced a family of symmetric polynomials ---
which are now known as \emph{Jack polynomials} $J^{(\alpha)}_\pi$ --- indexed by a partition 
and a deformation parameter $\alpha$.
From the contemporary point of view, probably the main motivation for studying Jack polynomials 
comes from the fact that they are a special case of the celebrated \emph{Macdonald polynomials} which 
\emph{``have found applications in special function theory, representation theory, algebraic geometry, 
group theory, statistics and quantum mechanics''} \cite{GarsiaRemmel2005}. 
Indeed, some surprising features of Jack polynomials \cite{Stanley1989} have led in the past to the discovery of 
Macdonald polynomials \cite{Macdonald1995} and Jack polynomials have been regarded as a relatively easy 
case \cite{LapointeVinet1995} which later allowed understanding of the more difficult case of 
Macdonald polynomials \cite{LapointeVinet1997}.
A brief overview of Macdonald polynomials (and their relationship to 
Jack polynomials) is given in \cite{GarsiaRemmel2005}.

Jack polynomials are also interesting on their own,
for instance in the context of Selberg integrals \cite{Kadell1997}
and in theoretical physics \cite{Feigin2002,PhysRevLett.100.246802}.

\subsection{Jack polynomials, Schur polynomials and zonal polynomials}
For some special choices of the deformation parameter $\alpha$, Jack polynomials coincide 
(up to some simple normalization constants) with some very established families of symmetric polynomials. 
In particular, the case $\alpha=1$ corresponds to \emph{Schur polynomials}, 
$\alpha=2$ corresponds to \emph{zonal polynomials}, 
and $\alpha=\frac{1}{2}$ corresponds to \emph{symplectic zonal polynomials}; 
see \cite[Chapter 1 and Chapter 7]{Macdonald1995} for more information about these functions.
For these special values of the deformation parameter, 
Jack polynomials are particularly nice because they have some additional structures and features 
(usually related to algebra and representation theory) and for this reason they are much better understood. 

\subsection{Jack polynomials and maps}

\begin{figure}
\centering
\begin{tikzpicture}[scale=0.6,
white/.style={circle,draw=black,fill=white,inner sep=4pt},
black/.style={circle,draw=black,fill=black,inner sep=4pt},
connection/.style={draw=black!80,black!80,auto}
]
\footnotesize

\begin{scope}
\clip (0,0) rectangle (10,10);
\fill[blue!3] (0,0) rectangle (10,10);
\draw (3,5) node (b1) [black] {};
\draw (b1) +(10,0) node (b1prim) [black] {};

\draw (b1) +(1,-3) node (b1-se) [white] {};
\draw (b1) +(-1,-3) node (b1-sw) [white] {};

\draw (8,8) node (b2) [black] {};
\draw (b2) +(0,-10) node (b2prim) [black] {};
\draw (b2) +(-10,0) node (b2prim2) [black] {};

\draw (6,5) node (w1) [white] {};
\draw (w1) +(0,10) node (w1prim) {};

\draw (12,7) node (w2) [white] {};
\draw (w2) +(-10,0) node (w2prim) [white] {};

\draw[connection,pos=0.2] (b2) to node {\textcolor{black}{$4$}} node [swap] {} (w1prim);
\draw[connection,pos=0.666] (b2prim) to node {\textcolor{black}{$4$}} node [swap] {} (w1);

\draw[connection,pos=0.2] (b2) to node {\textcolor{black}{$6$}} node [swap] {} (w2);
\draw[connection,pos=0.7] (b2prim2) to node {\textcolor{black}{$6$}} node [swap] {} (w2prim);

\draw[connection] (b1) to node {\textcolor{black}{$2$}} node [swap] {} (b1-sw);

\draw[connection] (b1) to node {\textcolor{black}{$3$}} node [swap] {} (b1-se);

\draw[connection] (b1) to node {\textcolor{black}{$5$}} node [swap] {} (w1);

\draw[connection] (w1) to node {} node [swap] {\textcolor{black}{$7$}} (b2);

\draw[connection] (w2) to node {} node [swap] {\textcolor{black}{$1$}} (b1prim);
\draw[connection] (w2prim) to node {} node [swap] {\textcolor{black}{$1$}} (b1);

\end{scope}

\draw[very thick,decoration={
    markings,
    mark=at position 0.666  with {\arrow{>}}},
    postaction={decorate}]  
(0,0) -- (10,0);

\draw[very thick,decoration={
    markings,
    mark=at position 0.666  with {\arrow{>}}},
    postaction={decorate}]  
(0,10) -- (10,10);

\draw[very thick,decoration={
    markings,
    mark=at position 0.666  with {\arrow{>>}}},
    postaction={decorate}]  
(0,0) -- (0,10);

\draw[very thick,decoration={
    markings,
    mark=at position 0.666  with {\arrow{>>}}},
    postaction={decorate}]  
(10,0) -- (10,10);

\end{tikzpicture}
\caption{Example of an \emph{oriented map}. The map is drawn on a torus: the left side of the square should be glued to the right side,
as well as bottom to top, as indicated by the arrows. }
\label{fig:map-kerov}
\end{figure}
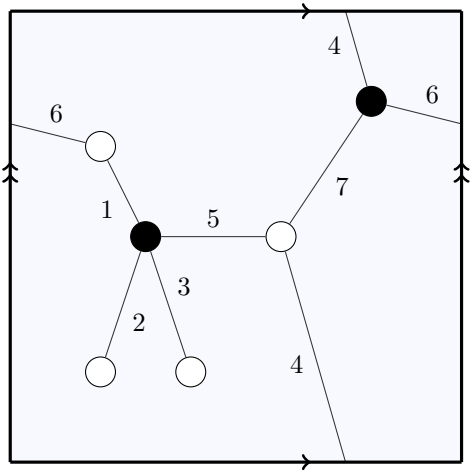

Roughly speaking, a \emph{map} is a graph drawn on a surface, see \cref{fig:map-kerov}.
In this article we will investigate the relationship between combinatorics of Jack polynomials and enumeration of maps.

In the special cases of Schur polynomials ($\alpha=1$) and zonal polynomials ($\alpha=2$ and $\alpha=\frac{1}{2}$) 
this relationship is already well-understood. 
The generic case is much more mysterious and we will be able only to present some partial results.

\subsection{Normalized characters}
The \emph{irreducible character} $\chi^\lambda(\pi)$ of the symmetric group is usually considered as a function 
of the partition $\pi$, with the Young diagram $\lambda$ fixed. It was a brilliant observation of Kerov and Olshanski
\cite{KerovOlshanski1994} that for several problems in the asymptotic representation theory it is convenient 
to do the opposite: keep the partition $\pi$ fixed and let the Young diagram $\lambda$ vary.  
It should be stressed that in this approach the Young diagram $\lambda$ is arbitrary, 
in particular there are no restrictions on the number of boxes of $\lambda$. 
In this way it is possible to study the structure of the series of the 
symmetric groups $\Sym{1}\subset \Sym{2}\subset \cdots$ and their representations in a uniform way. 
This concept is sometimes referred to as \emph{dual approach} to the characters of the symmetric groups.

In order for this idea to be successful, one has to replace the usual characters $\chi^\lambda(\pi)$ 
by the \emph{normalized characters} $\Ch_\pi(\lambda)$. 
Namely, for a partition $\pi$ of size $k$ and a Young diagram $\lambda$ with $n$ boxes we define 
\begin{equation}
\Ch_\pi(\lambda) =\begin{cases} (n)_k\ \frac{\chi^\lambda(\pi \cup 1^{n-k})}{\chi^\lambda(\id)} &
\text{if } k\leq n, \\ 0 & \text{otherwise,} \end{cases}
\label{eq:definition-character} 
\end{equation}
where$\chi^{\lambda}(\pi \cup 1^{n-k})$ is the character of the irreducible
representation indexed by $\lambda$ evaluated on a permutation
of cycle-type $\pi \cup 1^{n-k}$ and
\[(n)_k:=\underbrace{n (n-1) \cdots (n-k+1)}_{\text{$k$ factors}}\] 
denotes the \emph{falling factorial}.

This choice of normalization is justified by the fact that the so defined characters $\Ch_\pi$ belong to the algebra of \emph{polynomial functions on the set of Young diagrams} \cite{KerovOlshanski1994}, which in the last two decades turned out to be essential for several asymptotic and enumerative problems of the representation theory of the symmetric groups 
\cite{Biane1998,IvanovOlshanski2002,DolegaF'eray'Sniady2008}.

\subsection{Jack characters}
Lassalle \cite{Lassalle2008a,Lassalle2009} initiated investigation of a kind of \emph{dual approach} to Jack polynomials.
Roughly speaking, it is the investigation (as a function of $\lambda$, with $\pi$ being fixed) of 
the coefficient standing at $p_{\pi,1,1,\dots,1}$ in the expansion of the Jack symmetric polynomial 
$J^{(\alpha)}_\lambda$ in the basis of \emph{power-sum symmetric functions}. 
This coefficient, with some appropriate normalization factor given in \cref{sec:JackCh},
will be denoted by $\Ch_\pi^{(\alpha)}(\lambda)$.

This normalization factor is chosen in such a way that
in the important special case of the Schur polynomials ($\alpha=1$)
one recovers the normalized characters $\Ch_\pi^{(1)}=\Ch_\pi$ 
given by \eqref{eq:definition-character} which already proved to have a rich and fascinating structure. 
The cases of zonal polynomials, symplectic zonal polynomials and general Jack polynomials give rise to some 
new quantities for which in \cite{FeraySniady2011} we coined the 
names \emph{zonal characters} $\Ch_\pi^{(2)}$, \emph{symplectic zonal characters} $\Ch_\pi^{(1/2)}$, 
and general \emph{Jack characters} $\Ch_\pi^{(\alpha)}$. 

Another motivation for studying Jack characters $\Ch_\pi^{(\alpha)}$ comes from the observation that they form a linear basis of the algebra of \emph{$\alpha$-polynomial functions on the set of Young diagrams} (which is a simple deformation of the algebra of polynomial functions mentioned above). This fact is far from being trivial and was established by Lassalle 
\cite[Proposition 2]{Lassalle2008a}.

The main goal of this paper is to \emph{understand the combinatorial structure of Jack characters $\Ch_\pi^{(\alpha)}$}. In the following we will give more details about this problem.

\subsection{Embeddings of bicolored graphs}
\label{sec:SubsectStanleyGeneral}

\newcommand{\kolorSigma}{blue}
\newcommand{\kolorPi}{red}
\newcommand{\kolorV}{DarkGreen}
\newcommand{\kolorW}{RawSienna}

\begin{figure}
\centering
\subfloat[][]{\begin{tikzpicture}[scale=0.5,
white/.style={circle,draw=black,fill=white,inner sep=4pt},
black/.style={circle,draw=black,fill=black,inner sep=4pt},
connection/.style={draw=black!80,black!80,auto}
]
\footnotesize

\begin{scope}
\clip (0,0) rectangle (10,10);
\fill[blue!3] (0,0) rectangle (10,10);

\draw (3.333,2.333) node (b1)    [black,label=110:$\textcolor{\kolorPi}{\Pi}$] {};
\draw (b1) +(10,0) node (b1prim) [black] {};

\draw (7.666,6.666) node (b2)     [black,label=0:\textcolor{\kolorSigma}{$\Sigma$}] {};
\draw (b2) +(0,-10) node (b2prim) [black] {};

\draw (b2) +(-3,1) node (w2) [white,label=180:$\textcolor{\kolorW}{W}$] {};

\draw (6.666,3.333) node (w1) [white,label=120:$\textcolor{\kolorV}{V}$] {};
\draw (w1) +(-10,0) node (w1left) [white] {};
\draw (w1) +(0,10)  node (w1top)  [white] {};

\draw[connection] (b1) to node {\textcolor{black}{$4$}} node [swap] {} (w1);

\draw[connection] (b2) to node {\textcolor{black}{$3$}} node [swap] {} (w2);

\draw[connection,pos=0.333] (b2) to node {\textcolor{black}{$2$}} node [swap] {} (w1top);
\draw[connection,pos=0.8] (b2prim) to node {\textcolor{black}{$2$}} node [swap] {} (w1);

\draw[connection,pos=0.666] (b1prim) to node {\textcolor{black}{$1$}} node [swap] {} (w1);
\draw[connection,pos=0.333] (b1) to node {\textcolor{black}{$1$}} node [swap] {} (w1left);

\draw[connection,pos=0.666] (w1) to node {} node [swap] {\textcolor{black}{$5$}} (b2);

\end{scope}

\draw[very thick,decoration={
    markings,
    mark=at position 0.666  with {\arrow{>}}},
    postaction={decorate}]  
(0,0) -- (10,0);

\draw[very thick,decoration={
    markings,
    mark=at position 0.666  with {\arrow{>}}},
    postaction={decorate}]  
(0,10) -- (10,10);

\draw[very thick,decoration={
    markings,
    mark=at position 0.666  with {\arrow{>>}}},
    postaction={decorate}]  
(0,0) -- (0,10);

\draw[very thick,decoration={
    markings,
    mark=at position 0.666  with {\arrow{>>}}},
    postaction={decorate}]  
(10,0) -- (10,10);
\end{tikzpicture}
\label{subfig:map}}
\hfill
\subfloat[][]{
\begin{tikzpicture}[scale=1.2]
\begin{scope}

\draw[line width=5pt,\kolorSigma!20] (-0.2,0.5) -- (3.2,0.5);
\draw (3.2,0.5) node[anchor=west] {$\textcolor{\kolorSigma}{\Sigma}$};

\draw[line width=5pt,\kolorPi!20] (-0.2,1.5) -- (1.2,1.5);
\draw (1.2,1.5) node[anchor=west] {$\textcolor{\kolorPi}{\Pi}$};

\draw[line width=5pt,\kolorW!20] (2.5,-0.2) -- (2.5,1.2);
\draw (2.5,1.2) node[anchor=south] {$\textcolor{\kolorW}{W}$};

\draw[line width=5pt,\kolorV!20] (0.5,-0.2) -- (0.5,2.2);
\draw (0.5,2.2) node[anchor=south] {$\textcolor{\kolorV}{V}$};

\draw[ultra thick] (4.5,0) -- (3,0) -- (3,1) -- (1,1) -- (1,2) -- (0,2) -- (0,2.5); 
\draw (0,0) -- (3,0) -- (3,1) -- (1,1) -- (1,2) -- (0,2); 
\clip (0,0) -- (3,0) -- (3,1) -- (1,1) -- (1,2) -- (0,2); 
\draw (0,0) grid (3,3);
\end{scope}
\draw (0.5,-0.2) node[anchor=north,text height=8pt] {$a$};
\draw (1.5,-0.2) node[anchor=north,text height=8pt] {$b$};
\draw (2.5,-0.2) node[anchor=north,text height=8pt] {$c$};

\draw (-0.2,0.5) node[anchor=east]  {$\alpha$};
\draw (-0.2,1.5) node[anchor=east] {$\beta$};

\draw(2.5,0.5) node {$3$};
\draw(0.5,0.5) node {$2,5$};
\draw(0.5,1.5) node {$1,4$};

\end{tikzpicture}
\label{subfig:embed}}

\caption{\protect\subref{subfig:map} Example of a bicolored graph (drawn on the torus) and 
\protect\subref{subfig:embed}  an example of its embedding $F$ into the Young diagram $(3,1)$;
this embedding is given by 
$F(\Sigma)=\alpha$, $F(\Pi)=\beta$, $F(V)=a$, $F(W)=c$,
$F(1)=F(4)=(a \beta)$, $F(2)=F(5)=(a \alpha)$, $F(3)=(c\alpha)$.
The white vertices of the graph were labeled by capital Latin letters, 
the black vertices by capital Greek letters,
and the edges by Arabic numbers.
The columns of the Young diagram were indexed by small Latin letters, the rows by small Greek letters.}
\label{fig:embedding}
\end{figure}
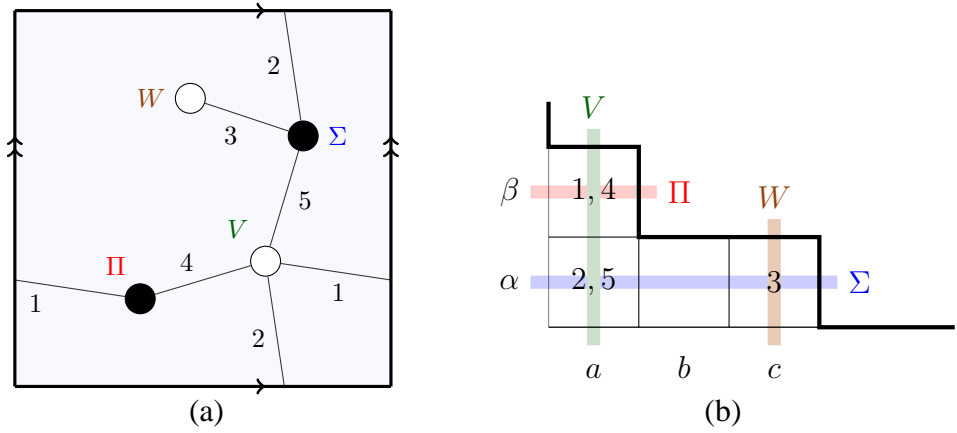

A \emph{bicolored graph $G$} is defined as a bipartite graph together with the choice of the 
coloring of its vertex set $\V=\V(G)$; we denote by
$\V_\bullet=\V_\bullet(G)$ and $\V_\circ=\V_\circ(G)$ respectively the sets
of black and white vertices of $G$.

An \emph{embedding $F$ of a bicolored graph $G$ to a Young diagram $\lambda$} is a function
which maps $\V_\circ$ to the set of columns of $\lambda$, 
maps $\V_\bullet$ to the set of rows of $\lambda$, 
and maps the edge set $\E(G)$ of $G$ to the set of boxes of $\lambda$, see \cref{fig:embedding}. 
We also require that an embedding preserves the relation of \emph{incidence},
i.e.,~a vertex $v\in \V(G)$ and an incident edge $e\in \E(G)$ 
should be mapped to a row or column $F(v)$ which contains the box
$F(e)$. 

We denote by $N_G(\lambda)$ the number of such embeddings of $G$ to $\lambda$. 
The quantities $N_G(\lambda)$ were introduced in the paper \cite{FeraySniady2011a} 
and they proved to be very useful for studying various asymptotic and enumerative problems 
of the representation theory of the symmetric groups \cite{FeraySniady2011a,DolegaF'eray'Sniady2008}.

When dealing with Jack polynomials, it will be convenient to consider
the following slightly deformed version:
\begin{equation}
\label{eq:ng-alpha}
N_G^{(\alpha)}(\lambda):= \left( \frac{1}{\sqrt{\alpha}}\right)^{|\V_\bullet(G)|}
             \left(  {\sqrt{\alpha}}\right)^{|\V_\circ(G)|} N_G(\lambda).   
\end{equation}
Note that $N_G^{(1)}(\lambda)=N_G(\lambda)$.

\subsection{Stanley formulas}
We will recall some formulas that express 
Jack characters $\Ch^{(\alpha)}_\pi$ in terms of functions $N^{(\alpha)}_G$
in the special cases $\alpha\in\left\{\frac{1}{2},1,2\right\}$.
Formulas of this type are called \emph{Stanley formulas} after
Stanley, who found such a formula for $\alpha=1$
as a conjecture \cite{Stanley-preprint2006}.

\subsubsection{Stanley formula for $\alpha=1$ and oriented maps}

Roughly speaking, an \emph{oriented map $M$} is defined as a bicolored graph 
drawn on an \emph{oriented} surface, see \cref{fig:map-kerov}. 
Since oriented maps are not in the focus of this article, 
we do not present all necessary definitions and for the details we refer to \cite{FeraySniady2011a}. 

It has been observed in \cite{FeraySniady2011a} that a certain formula 
conjectured by Stanley \cite{Stanley-preprint2006}
and proved by the second author \cite{F'eray2010}
for the normalized characters of the symmetric groups 
can be expressed as the sum
\begin{equation}
\label{eq:alpha=1}
 \Ch^{(1)}_\pi(\lambda)= \Ch_\pi(\lambda) =(-1)^{\ell(\pi)}\ \sum_{M} (-1)^{|\V_\bullet(M)|}\ 
 N^{(1)}_M(\lambda) 
\end{equation}
over all \emph{oriented bicolored maps $M$ with face-type $\pi$}. 
Here and throughout the paper, $N^{(\alpha)}_M$ denotes the function $N^{(\alpha)}$
indexed by the underlying graph of the map $M$ while
$\ell(\pi)$ denotes the number of parts of the partition $\pi$.

\subsubsection{Stanley formula for $\alpha\in\{2,\frac{1}{2}\}$ and non-oriented maps}
Roughly speaking, a \emph{non-oriented map} --- or, shortly, \emph{map} --- 
is a bicolored graph drawn on a surface. 
For a precise definition (and the definition of the face-type) 
we refer to \cref{subsec:non-oriented}.

In \cite{FeraySniady2011} it has been proved that
\begin{align} 
\label{eq:alpha=2}
\Ch_\pi^{(2)} (\lambda) &= (-1)^{\ell(\pi)}\ \sum_M  (-1)^{|\V_\bullet(M)|}
 \cdot \left(- \frac{1}{\sqrt{2}} \right)^{|\pi|+\ell(\pi)-|\V(M)|}
  N^{(2)}_M(\lambda),\\
\label{eq:alpha=1/2}
\Ch_\pi^{(1/2)} (\lambda) &= (-1)^{\ell(\pi)}\ \sum_M  \left( 
-1\right)^{|\V_\bullet(M)|}
  \cdot \left(\frac{1}{\sqrt{2}} \right)^{|\pi|+\ell(\pi)-|\V(M)|}
  N^{(1/2)}_M(\lambda), 
\end{align}
where the sums run over all \emph{non-oriented maps $M$
with the face-type $\pi$}.

The notations and the normalizations in \cite{FeraySniady2011}
are a bit different, so in \cref{SubsectReformulationFS11}
we make the link between the statements above and the results of \cite{FeraySniady2011}.

\subsection{The main conjecture}
\label{sec:MainConjecture}
Based on the special cases above, on the theoretical results of this paper,
and some computer exploration, we dare to formulate the following conjecture.
\begin{themainconjecture}
    \label{conj:main}
    To each non-oriented map $M$ one can associate some weight $\vagueweight_M(\gamma)$
    such that:
    \begin{itemize}
        \item for every $\lambda$ and $\pi$, the following formula holds:
\begin{equation}
\label{eq:conjectural-mysterious-formula}
\Ch_\pi^{(\alpha)} (\lambda) =  (-1)^{\ell(\pi)} 
            \sum_M  \left(-1\right)^{|\V_\bullet(G)|}
          \vagueweight_M \left( \frac{1-\alpha}{\sqrt{\alpha}}\right)
          N^{(\alpha)}_M(\lambda),
\end{equation}
where the sum runs over all \emph{non-oriented maps $M$ with the face-type $\pi$};
        \item $\vagueweight_M(\gamma)$ is a polynomial in variable $\gamma$ 
with non-negative rational coefficients, of degree (at most)
\[
 \Moeb(M):=2 (\text{number of connected components of $M$}) - \chi(M), 
\]
where \[\chi(M):=|\V(M)|-|\E(M)|+|\F(M)|\]
is the Euler characteristic of $M$. Moreover, the polynomial $\vagueweight_M(\gamma)$ is an even (respectively, odd) polynomial if and only if
the Euler characteristic $\chi(M)$ is an even number (respectively, an odd number).
    \end{itemize}
\end{themainconjecture}

Throughout the paper, we shall denote the argument of $\vagueweight_M$ in \eqref{eq:conjectural-mysterious-formula} by
\begin{equation}
\label{eq:def-gamma}
\gamma=\gamma(\alpha):=\frac{1-\alpha}{\sqrt{\alpha}}.   
\end{equation}

\begin{remark}
\label{remark:not-unique}
Let us explain a bit more the meaning of our conjecture.
We know that for each partition $\pi$ there exists a collection of polynomials $c_G(\gamma)$ indexed by
bicolored graphs such that
\begin{equation}
\label{eq:expansion-not-explicit}
\Ch_\pi^{(\alpha)} (\lambda) = 
\sum_G  c_G(\gamma)\ N^{(\alpha)}_G(\lambda)
\end{equation}
holds true for an arbitrary Young diagram $\lambda$ ---
see \cref{prop:ExistenceSumNG}.

Since the functions $N^{(\alpha)}_G$, seen as functions on the set of Young diagrams, 
are not linearly independent \cite[Proposition 2.2.1]{F'eray2009},
the expansion \eqref{eq:expansion-not-explicit} is not unique.
Also, as there might be several maps which correspond to a given graph, indexing the sum
by maps (instead of graphs) gives even more freedom on the coefficients.
Therefore our conjecture should be understood as a claim about the existence
of a \emph{particularly nice} expansion of $\Ch_\pi^{(\alpha)}$
in terms of the functions $N^{(\alpha)}_M$.
\end{remark}

\subsection{Our (unsuccessful) candidate for the weight $\vagueweight_M(\gamma)$}

As we have seen above, the case $\alpha=1$ corresponds to a summation over \emph{oriented maps} (Eq.~\eqref{eq:alpha=1}), 
while the cases $\alpha=2$ and $\alpha=\frac{1}{2}$ correspond to a summation over \emph{non-oriented maps}
(Eqs.~\eqref{eq:alpha=2}, \eqref{eq:alpha=1/2}) with some simple coefficients which depend only on some general 
features of the map, such as the number of the vertices.
Thus one can expect that the hypothetical weight $\vagueweight_M(\gamma)$ should be interpreted as a kind of 
\emph{measure of non-orientability of a given map $M$}. 

This notion of \emph{measure of non-orientability} is not very well defined.
For example, one could require that for $\alpha=1$ the corresponding coefficient $\vagueweight_M(0)$ 
is equal to $1$ if $M$ is orientable and zero otherwise; 
and that for $\alpha\in\left\{\frac{1}{2},2\right\}$ (which corresponds to $\gamma(\alpha)=\pm \frac{1}{\sqrt{2}}$) 
the coefficient $\vagueweight_M\left(\pm \frac{1}{\sqrt{2}}\right)$ takes some fixed value on all 
(orientable and non-orientable) maps.

In \cref{subsec:measure-of-nonorientability} we will define some quantity $\weight_M=\weight_M(\gamma)$
which attempts to measure the non-orientability of a given map $M$. 
Note that $\weight_M$ depends on $\gamma(\alpha)$ and thus 
implicitly on $\alpha$ as well but we drop this dependence in the notation.  
Roughly speaking, $\weight_M$ is defined as follows: 
we remove the edges of the map $M$ one after another in a random order. 
For each edge which is to be removed we check the \emph{type} of this edge (for example, an edge may be \emph{twisted} if, 
in some sense, it is a part of a \emph{M\"obius band}). 
We multiply the factors corresponding to the types of all edges. 
The quantity $\weight_M$ is defined as the mean value of this product.

One could complain that this is a weak measure of non-orientability of a map; 
in particular for $\alpha=1$ the corresponding weight $\weight_M$ does not vanish on non-orientable maps. 
Nevertheless, as we shall see, 
this weight $\weight_M$ gives a positive answer to our \cref{conj:main} in many (but, regretfully, not all!) cases. 

\subsection{Orientability generating series $\newCh_\pi^{(\alpha)} (\lambda)$}
We define the \emph{orientability generating series} as the formula
\eqref{eq:conjectural-mysterious-formula} from \cref{conj:main} in which the hypothetical
weight $\vagueweight_M$ has been substituted by the weight $\weight_M$ considered in the current paper:
\begin{equation}
\label{eq:conjectured-stanley}
\newCh_\pi^{(\alpha)} (\lambda):= 
(-1)^{\ell(\pi)} \ \sum_M \left(-1\right)^{|\V_\bullet(M)|}
 \weight_M(\gamma) \ N^{(\alpha)}_M(\lambda),  
\end{equation}
where the sum runs over all non-oriented maps $M$ with the face-type $\pi$.

Now it is natural to ask the following question.

\begin{question}
\label{conj:precise}
Does the weight $\weight_M$ give a positive answer to \cref{conj:main}?

In other words, is it true that for any partition $\pi$ and any Young diagram $\lambda$
the corresponding Jack character and the orientability generating series coincide:
\begin{equation}
\label{eq:our-dream-equality}
\Ch^{(\alpha)}_\pi(\lambda) = \newCh^{(\alpha)}_\pi(\lambda)?
\end{equation}
\end{question}

Regretfully, the answer to this question is \textbf{NO}:
with extensive computer calculations we were able to find some concrete counterexamples,
see \cref{sec:Conj_Is_False} for more details.
Nevertheless, as we shall discuss in the following, 
it seems that the formula \eqref{eq:conjectured-stanley} predicts \emph{some} properties 
of Jack characters surprisingly well.
For this reason we hope that the investigation of the quantity \eqref{eq:conjectured-stanley} 
might shed some light on the problem and eventually lead to the correct solution
of \cref{conj:main}.

For example, the positive answer for \cref{conj:precise} holds for the following special cases: 
$\pi=(n)$ which consists of a single part for $1\leq n\leq 8$, 
furthermore for $\pi=(2,2)$, and $\pi=(3,2)$ (the proofs are computer-assisted).
Furthermore, \cref{cor:add-parts-1} shows that the answer for \cref{conj:precise} 
is positive also for any of these partitions augmented by an arbitrary number of parts equal to $1$.

Also, the positive answer for \cref{conj:precise} holds in the special case $\alpha\in\left\{ 2, \frac{1}{2}\right\}$,
see \cref{thm:special-values-are-ok}.

However, in order to present the most interesting and promising type of predictions given by the formula
\eqref{eq:conjectured-stanley}, we will need the notion of \emph{Stanley polynomials}, see below.

\subsection{Stanley polynomials}
\label{Sect:Stanley_pol}
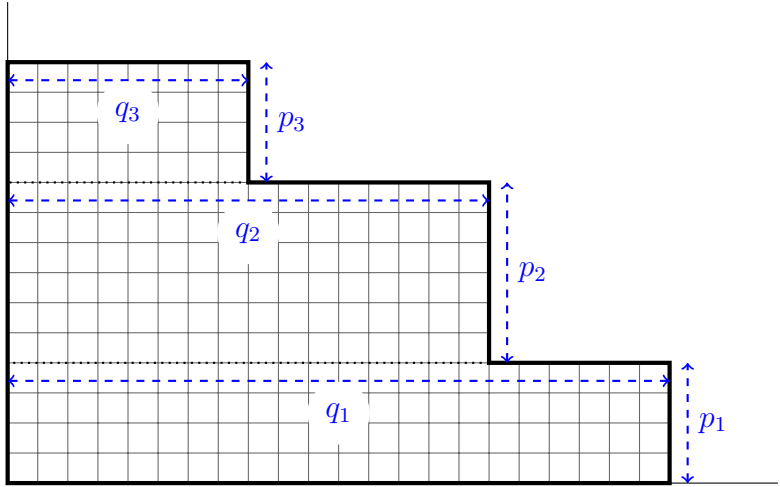
\begin{figure}[t]
\begin{tikzpicture}[scale=0.8]

\begin{scope}
\clip (0,0) -- (11,0) -- (11,2) -- (8,2) -- (8,5) -- (4,5) -- (4,7) -- (0,7) -- cycle;
\draw[black!60] (0,0) grid[step=0.5] (20,10); 
\end{scope}

\draw[ultra thick](0,0) -- (11,0) -- (11,2) -- (8,2) -- (8,5) -- (4,5) -- (4,7) -- (0,7) -- cycle;

\draw[dotted,thick] (8,2) -- (0,2)
(4,5) -- (0,5);

\begin{scope}[<->,thick,auto,dashed,blue]
\draw (11.3,0) to node[swap] {$p_1$} (11.3,2);
\draw (0,1.7) to node[swap,shape=circle,fill=white] {$q_1$} (11,1.7);

\draw (8.3,2) to node[swap] {$p_2$} (8.3,5);
\draw (0,4.7) to node[swap,shape=circle,fill=white] {$q_2$} (8,4.7);

\draw (4.3,5) to node[swap] {$p_3$} (4.3,7);
\draw (0,6.7) to node[swap,shape=circle,fill=white] {$q_3$} (4,6.7);
\end{scope}

\draw(0,0) -- (12.8,0);
\draw(0,0) -- (0,8);

\end{tikzpicture}

\caption{Multirectangular Young diagram $P\times Q$.}

\label{fig:multirectangular}
\end{figure}

If $P=(p_1,\dots,p_\ell)$ and $Q=(q_1,\dots,q_\ell)$ are sequences of non-negative integers 
such that $q_1\geq \cdots \geq q_\ell$, we consider the \emph{multirectangular} Young diagram
\[P \times Q := 
(\underbrace{q_1,\dots,q_1}_{\text{$p_1$ times}},\dots,\underbrace{q_\ell,\dots,q_\ell}_{\text{$p_\ell$ times}}).\]
This concept is illustrated in \cref{fig:multirectangular}. 
Quantities $P$ and $Q$ are referred to as \emph{multirectangular coordinates} of the Young diagram $P \times Q$.

Stanley \cite{Stanley2003/04} proved that the evaluation $\Ch^{(1)}_\pi(P\times Q)$ 
of the normalized character of the symmetric group on a multirectangular Young diagram
is a polynomial in the variables $p_1,\dots,p_\ell, q_1,\dots,q_\ell$.
This polynomial, referred to as \emph{Stanley polynomial}, turned out later to be a powerful tool
 in the context of \emph{Kerov polynomials}
\cite{DolegaF'eray'Sniady2008}.
In particular, one could argue that instead of viewing the character $\lambda\mapsto \Ch^{(1)}_\pi(\lambda)$
as a function of the Young diagram, it is more convenient to view it as a function 
\[(p_1,\dots,p_\ell,q_1,\dots,q_\ell)\mapsto  \Ch^{(1)}_\pi(P\times Q)\]
of the multirectangular coordinates. 
The idea of Stanley polynomials has to be a bit adjusted for the framework of Jack characters $\Ch^{(\alpha)}_{\pi}$;
we will discuss the details below.

If $c$ is a number and $P=(p_1,\dots,p_\ell)$, we use the shorthand notation 
\[ c P= c (p_1,\dots,p_\ell)=(c p_1,\dots, c p_\ell).\]
We will be interested in the evaluations of the Jack character 
\begin{equation}
\label{eq:stanley-for-alpha}
 \Ch^{(\alpha)}_{\pi} \left(  \sqrt{\alpha} P \times \frac{1}{\sqrt{\alpha}} Q\right)    
\end{equation}
as well as in analogous evaluations of the orientability generating series
\begin{equation}
\label{eq:stanley-for-alpha-ogs}
 \newCh^{(\alpha)}_{\pi} \left(  \sqrt{\alpha} P \times \frac{1}{\sqrt{\alpha}} Q\right)    
\end{equation}
in \emph{rescaled multirectangular coordinates} $p_1,\dots,p_\ell,q_1,\dots,q_\ell$.
Note that for the above expressions to make sense we have to assume that
\[\sqrt{\alpha}\ p_1,\dots,\sqrt{\alpha}\ p_\ell,\frac{1}{\sqrt{\alpha}} q_1,\dots,\frac{1}{\sqrt{\alpha}} q_\ell\]
are non-negative integers such that 
\[\frac{1}{\sqrt{\alpha}} q_1\geq \cdots \geq \frac{1}{\sqrt{\alpha}} q_\ell.\]
We will show (see \cref{lem:stanley-polynomials-exist}) that 
\eqref{eq:stanley-for-alpha} (as well as \eqref{eq:stanley-for-alpha-ogs})  
can be written uniquely as a polynomial in the indeterminates
\begin{equation}
\label{eq:stanley-coordinates-enchanced}
p_1,\dots,p_\ell,q_1,\dots,q_\ell,\gamma.   
\end{equation}
These polynomials will also be referred to as \emph{Stanley polynomials}.
It is worth pointing out that the indeterminate $\alpha$ is \emph{not} listed among the indeterminates
\eqref{eq:stanley-coordinates-enchanced}.

\subsection{Orientability Generating Series property}
\newcommand{\monomial}{\mathcal{M}}

As promised, we will discuss now the predictions given by the formula
\eqref{eq:conjectured-stanley} in the context of Stanley polynomials.

\subsubsection{The definition}
\begin{definition}[OGS-property]
\label[definition]{def:ogs-property}
Consider a pair which consists of 
\begin{itemize}
   \item a partition $\pi$, and
   \item a monomial
\begin{equation}
   \label{eq:generic-monomial}
 \monomial=p_1^{e_1} \cdots p_\ell^{e_\ell} \
   q_1^{f_1} \cdots q_\ell^{f_\ell} \
   \gamma^g
\end{equation}
in indeterminates \eqref{eq:stanley-coordinates-enchanced}.
\end{itemize}
We say that the pair $(\pi,\monomial)$ fulfills the \emph{Orientability Generating Series property (OGS-property)}
if the appropriate coefficients standing at the monomial $\monomial$ 
in the Stanley polynomials for Jack character \eqref{eq:stanley-for-alpha} and in the Stanley polynomial
for the orientability generating series \eqref{eq:stanley-for-alpha-ogs}
are equal:
\begin{equation} 
\label{eq:what-is-ogs}
[\monomial]\ \Ch^{(\alpha)}_\pi\left(  \sqrt{\alpha} P \times \frac{1}{\sqrt{\alpha}} Q\right) =
   [\monomial]\ \newCh^{(\alpha)}_\pi\left(  \sqrt{\alpha} P \times \frac{1}{\sqrt{\alpha}} Q\right). 
\end{equation}
\end{definition}

\bigskip

The question is: 
\emph{which pairs $(\pi,\monomial)$ have the OGS-property?}
There are known examples of pairs which \emph{do not} have this property 
(for example $\monomial=p_1 p_2 p_3 q_1 q_2 q_3$ and
$\pi=(9)$, see \cref{sec:Conj_Is_False}).
The positive examples will be discussed in the following.

\subsubsection{OGS-property and rectangular Young diagrams}
\label{subsec:rectangular}
Investigation of the normalized characters $\Ch_\pi(\lambda)$ in the case when $\lambda=p\times q$ is a 
\emph{rectangular} Young diagram was initiated by Stanley \cite{Stanley2003/04} who noticed 
that they have a particularly simple structure; 
in particular he showed that the formula \eqref{eq:alpha=1} holds true in this special case.

This line of research was continued by Lassalle \cite{Lassalle2008a} 
who (apart from other results) studied Jack characters $\Ch^{(\alpha)}_\pi(p\times q)$ on rectangular Young diagrams. 
In particular, Lassalle found a recurrence relation \cite[formula (6.2)]{Lassalle2008a} fulfilled by such characters; 
this recurrence relates the values of Jack characters on a fixed rectangular Young diagram $p\times q$, 
corresponding to various partitions $\pi$. This recurrence relation is essential for the current paper; 
expression \eqref{eq:conjectured-stanley} was formulated by a careful attempt to 
reverse-engineer the hypothetical hidden combinatorial structure behind Lassalle's recurrence.
In particular, our measure of non-orientability of maps $\weight_M$ was from the very beginning chosen in such a way 
that the formula \eqref{eq:our-dream-equality} from \cref{conj:precise}
holds true for an arbitrary rectangular Young diagram $\lambda=p \times q$.
In \cref{sec:rectangular} we will discuss these issues and prove the following theorem.
\begin{theorem}\
\label{thm:ogs-ok-for-rectangle}
\begin{itemize}
\item  
If $\lambda=p\times q$ is a rectangular Young diagram
then the answer for \cref{conj:precise} is positive; in other words
\[ \Ch^{(\alpha)}_\pi(p \times q) = \newCh^{(\alpha)}_\pi(p \times q)\]
holds true for any partition $\pi$ and any positive integers $p$ and $q$.

\item If a monomial $\monomial$ involves only variables $p_i,q_i,\gamma$ 
for some value of the index $i$
(i.e., $\monomial=p_i^e q_i^f \gamma^g$)
and $\pi$ is an arbitrary partition,
then $(\monomial,\pi)$ has the OGS-property, i.e.~the equality \eqref{eq:what-is-ogs} holds true.
\end{itemize}
\end{theorem}

Extensive computer exploration leads us to believe that the following extension of the above theorem holds true
as well.
\begin{conjecture}
\label{conj:2-rectangle}
\
\begin{itemize}
\item 
The answer for \cref{conj:precise} is positive if
$\lambda=(p_1,p_2) \times (q_1,q_2)$ is a multirectangular Young diagram consisting of (at most) two rectangles.

\item 
If monomial $\monomial$ 
involves only variables $p_i,p_j,q_i,q_j,\gamma$ for some choice of the indices $i,j$
(i.e., $\monomial=p_i^{e_i} p_j^{e_j} q_i^{f_i} q_j^{f_j} \gamma^g$)
and 
$\pi$ is an arbitrary partition,
then $(\monomial,\pi)$ has the OGS-property. 
\end{itemize}
\end{conjecture}

\subsubsection{OGS-property for top-degree monomials}
One can show that both Stanley polynomial for Jack character \eqref{eq:stanley-for-alpha}
and for the orientability generating series \eqref{eq:stanley-for-alpha-ogs}
are polynomials in indeterminates \eqref{eq:stanley-coordinates-enchanced} of degree 
$|\pi|+\ell(\pi)$. 
When this article was almost finished, the following positive result 
about the monomials of this maximal degree was announced.
\begin{theorem}[\cite{Sniady2014}]
Let $\pi$ be an arbitrary partition and let monomial $\monomial$ 
of the form \eqref{eq:generic-monomial} be of degree 
\[ e_1+\cdots+e_\ell+f_1+\cdots+f_\ell+g = |\pi|+\ell(\pi).\]

Then the pair $(\monomial, \pi)$ has the OGS-property.
\end{theorem}

This positive result is an indication that there could be indeed some truth behind 
our intuitions on the orientability generating series 
and it encourages further investigation of the topic.

\subsection{Links with other problems}
\subsubsection{Lassalle's conjectures}
\label{subsec:lassalles-conjectures}
Our search for some Stanley formula for general Jack characters $\Ch^{(\alpha)}_\pi$ 
has been motivated by two recent conjectures of Lassalle.
He conjectured some nice positivity and integrality properties
for the coefficients of $\Ch^{(\alpha)}_\pi$ expressed, respectively, 
in terms of \emph{multirectangular coordinates} (i.e., the coefficients of Stanley polynomials)
\cite[Conjecture 1 and Conjecture 2]{Lassalle2008a}
and in terms of \emph{free cumulants} (see \cref{sec:FreeCumulants} for the definition)
\cite[Conjecture 1.1 and Conjecture 1.2]{Lassalle2009}.
The positivity part in both conjectures would follow
from our 
Main Conjecture \ref{conj:main}:
for multirectangular coordinates we explain the connection
in \cref{sec:link_Lassalle}, while
for free cumulants this would follow from the general machinery
developed in \cite{DolegaF'eray'Sniady2008}.

\subsubsection{The $b$-Conjecture of Goulden and Jackson}

In the current paper we investigate the combinatorics of \emph{Jack characters} related to maps. 
The study of analogous connections between \emph{Jack polynomials} and maps is much older. 
In particular, Goulden and Jackson \cite{Goulden1996} formulated a conjecture (called \emph{$b$-Conjecture}) 
which claims, roughly speaking, that the connection coefficients of Jack polynomials can be
explained combinatorially as a summation over certain maps with coefficients that should describe 
\emph{non-orientability of a given map}. An extensive bibliography to this topic can be found in \cite{La2009}.

Although there is no direct link between our problem and the $b$-Conjecture 
(we are unable to show, for instance, that one
implies the other), both problems seem quite close and we hope that any progress on one of them 
could give ideas to solve the other.

\subsection{Outline of the paper}
In \cref{subsec:preliminaries}, we define Jack characters.
Then, in \cref{sec:maps}, we give formal definitions related to non-oriented
maps and define the weight $\weight_M$.
In \cref{sec:rectangular} and \cref{sec:special_alpha},
we prove that the answer for \cref{conj:precise} is positive
for rectangular Young diagrams on the one hand and,
on the other hand, for the special choice of the deformation parameter
$\alpha\in\left\{ \frac{1}{2}, 2\right\}$.
In \cref{sec:link_Lassalle}, we explain the link between 
\cref{conj:main} and the above-mentioned conjectures of Lassalle.
Then finally, in \cref{sec:Conj_Is_False},
we present our numerical exploration and the counterexample.

\section{Preliminaries}
\label{subsec:preliminaries}

\subsection{Partitions and Young diagrams}

A \emph{partition} $\pi=(\pi_1,\dots,\pi_l)$ is defined as a weakly decreasing finite sequence of positive integers. If $\pi_1+\cdots+\pi_l=n$ we also say that \emph{$\pi$ is a partition of $n$} and denote it by $\pi\vdash n$.
We will use the following notations: 
$|\pi|:=\pi_1+\cdots+\pi_l$;
furthermore $\ell(\pi):=l$ denotes the \emph{number of parts of $\pi$} and
\[m_i(\pi):=\big|\{k: \pi_k=i\}\big|\] 
denotes the \emph{multiplicity} of $i\geq 1$ in the partition $\pi$.
When dealing with partitions we will use the shorthand notation 
\[1^l:=(\underbrace{1,\dots,1}_{\text{$l$ times}}).\]

Any partition can be alternatively viewed as a \emph{Young diagram}. 
For drawing Young diagram we use the French convention.

The conjugacy classes of the symmetric group $\Sym{n}$ are in the one-to-one correspondence with the partitions of $n$. 
Thus any partition $\pi\vdash n$ can be also viewed as  some (arbitrarily chosen) permutation $\pi\in\Sym{n}$ 
with the corresponding cycle decomposition. 
This identification between partitions and permutations allows us to define characters 
such as $\Tr \rho(\pi)$, $\Ch^{(\alpha)}_{\pi}$ for $\pi$ being either a permutation or a partition.

\subsection{Jack characters}
\label{sec:JackCh}
\emph{Jack characters} were introduced by Lassalle \cite{Lassalle2008a,Lassalle2009}; 
however, we will use a slightly different normalization than the one used in his papers.
This new normalization was introduced and motivated in \cite{DolegaFeray2014}.
We present it in the following.

Firstly, as there are several of them, we have to fix a normalization of Jack polynomials. 
In our context it is most convenient to use the functions denoted by $J$ in the book of
Macdonald \cite[Section VI, Eq.~(10.22)]{Macdonald1995}. 
We expand the Jack polynomial in the basis of power-sum symmetric functions:
\[
J_\lambda^{(\alpha)}=\sum_{\substack{\rho: \\
|\rho|=|\lambda|}} 
\theta_{\rho}^{(\alpha)}(\lambda)\ p_{\rho};
\]
then we define the \emph{Jack character}
\begin{equation}
\label{eq:character}
\Ch_{\pi}^{(\alpha)}(\lambda):=
\alpha^{-\frac{|\pi|-\ell(\pi)}{2}}
\binom{|\lambda|-|\pi|+m_1(\pi)}{m_1(\pi)}
\ z_\pi \ \theta^{(\alpha)}_{\pi \cup 1^{|\lambda|-|\pi|}}(\lambda),
\end{equation}
where 
\[z_\pi = \pi_1 \pi_2 \cdots \ m_1(\pi)!\ m_2(\pi)! \cdots.\]
It turns out that for $\alpha=1$ we recover the usual
normalized characters \eqref{eq:definition-character} of the symmetric groups
(see \cite[Sections 1.3 and 1.4]{DolegaFeray2014}).

\subsection{Free cumulants}
\label{sec:FreeCumulants}
We will present now the notion of the \emph{free cumulants of a Young diagram}
which will be necessary later in some proofs.

\begin{definition}
    For a Young diagram $\lambda$, we define the sequence of its \emph{free cumulants} 
    $R^{(1)}_2(\lambda),R^{(1)}_3(\lambda),\dots$
    which are given (for $k \ge 2$) by
    \begin{equation}
    \label{eq:definition-free-cumulant}    
    R^{(1)}_k(\lambda) := (-1) \sum_{T} (-1)^{|\V_\bullet(T)|}\ N^{(1)}_{T}(\lambda),
    \end{equation}
    where the sum runs over \emph{bicolored, planted plane trees with $k$ vertices}.
    The formal definition of a planted plane tree will not be necessary for the purposes of this article; 
    we will only need the fact that each free cumulant
    is a linear combinations of the functions $N^{(1)}_{G}$.
    
\end{definition}

The usual way of defining the free cumulants of a Young diagram $\lambda$ is a two-step procedure \cite{Biane1998}: 
it uses \emph{Kerov's transition measure} of $\lambda$ 
\cite{Kerov1993transition} and the notion of \emph{free cumulants of a probability measure on the real line} \cite{NicaSpeicherBook}
which originates in Voiculescu's \emph{free probability theory}.
Our definition has the advantage of being more direct.
The equivalence between both definitions has been established by Rattan \cite{Rattan2007},
see also \cite[Theorem 9]{FeraySniady2011a}.

\begin{definition}
    Dealing with Jack polynomials, it is convenient to use \emph{anisotropic free cumulants} given by
    \begin{equation}
    \label{eq:definiton-anisotropic-free-cumulant}
    R^{(\alpha)}_k(\lambda) := (-1) \sum_{T} \left(-1\right)^{|\V_\bullet(T)|}
    N^{(\alpha)}_{T}(\lambda),
    \end{equation}
    where the sum runs again over \emph{bicolored, planted plane trees with $k\geq 2$ vertices}.
\end{definition}

The difference between \eqref{eq:definition-free-cumulant} and \eqref{eq:definiton-anisotropic-free-cumulant} 
lies in the use of the deformed number of embeddings $N^{(\alpha)}_T$ instead of the non-deformed one.
The anisotropic free cumulants have been introduced by Lassalle \cite{Lassalle2009}.
However, the normalization used here is different and corresponds to the one in \cite{DolegaFeray2014}
(for details see \cref{sec:lassalle's-normalization} below).

\subsection{Relationship to Lassalle's normalization}
\label{sec:lassalle's-normalization}
For Reader's convenience we provide below the relationship between the quantities used by Lassalle 
(in boldface) and the ones used by us:
\begin{align}
\notag    \bm{\vartheta}_{\pi \cup 1^{n-|\pi|}}^\lambda(\alpha) &=
    \binom{|\lambda|-|\pi|+m_1(\pi)}{m_1(\pi)}\ z_\pi\ \theta_{\pi \cup 1^{|\lambda|-|\pi|}}^\lambda(\alpha), 
\\
     \Ch^{(\alpha)}_\pi(\lambda) &=\alpha^{-\frac{|\pi|-\ell(\pi)}{2}}\ 
     \bm{\vartheta}_{\pi \cup 1^{n-|\pi|}}^\lambda(\alpha);
\label{eq:lassalle-normalization-character}
\\
\notag
R_k^{(\alpha)}(\lambda) &= \alpha^{k/2}\ \bm{R_k}(\lambda; \alpha).
\end{align}

Our convention has the advantage of being compatible with the symmetry 
$(\alpha,\lambda) \leftrightarrow (\alpha^{-1},\lambda')$,
where $\lambda'$ is the transpose diagram of $\lambda$ 
(defined, \emph{e.g.}, in \cite[Section 1.1]{Macdonald1995}).
Namely,
\begin{align}
\Ch_\pi^{(\alpha)}(\lambda) &= (-1)^{|\pi|-\ell(\pi)} \Ch_\pi^{(1/\alpha)}(\lambda');
\label{EqDuality}
\\
\notag
R_k^{(\alpha)}(\lambda) &= (-1)^k R_k^{(1/\alpha)}(\lambda').
\end{align}
The first equation is an adaptation of \cite[VI, (10.30)]{Macdonald1995}
to our notations,
while the second one follows easily from the definitions.

\subsection{Existence of an $N^{(\alpha)}_G$-expansion for Jack characters}

\begin{proposition}
\label{prop:existence-N-expansion}
For each partition $\pi$, there exists a collection $(c_G)$ of polynomials 
$c_G\in\QQ[\gamma]$ indexed by bicolored graphs such that
\begin{equation}
\label{eq:expansion-not-explicit-prop}
\Ch_\pi^{(\alpha)} (\lambda) = 
\sum_G  c_G\big(\gamma(\alpha) \big)\ N^{(\alpha)}_G(\lambda)
\end{equation}
holds true for an arbitrary Young diagram $\lambda$.
\label{prop:ExistenceSumNG}
\end{proposition}
\begin{proof}
    Any sum which is on the form given by the right-hand side of \eqref{eq:expansion-not-explicit-prop} 
    shall be called an \emph{$N^{(\alpha)}_G$-expansion}.

    By definition \eqref{eq:definiton-anisotropic-free-cumulant}, 
    any anisotropic free cumulant $R^{(\alpha)}_k$ admits an $N^{(\alpha)}_G$-expansion.
    Moreover, any linear combination and any product of some $N^{(\alpha)}_G$-expansions 
    is also an $N^{(\alpha)}_G$-expansion
    (indeed, it is straightforward to check that $N^{(\alpha)}_G \cdot N^{(\alpha)}_{G'}=N^{(\alpha)}_{G \sqcup G'}$).
    Also, the constant function equal to $\gamma$ has an $N^{(\alpha)}_G$-expansion corresponding 
    to the empty bicolored graph $\emptyset$:
    \[ \gamma = \gamma \ N^{(\alpha)}_{\emptyset}.\]
    Thus we have shown that each element of the algebra  
    generated by the following collection of functions on the set of Young diagrams:
    \[ \gamma, R^{(\alpha)}_2, R^{(\alpha)}_3,\dots \]
    has an $N^{(\alpha)}_G$-expansion.

    But, by \cite[Proposition 3.7]{DolegaFeray2014} the latter algebra contains each Jack character
    $\Ch_\pi^{(\alpha)}$, which concludes the proof.
 \end{proof}
Unfortunately, not much is known about the expansion of the Jack character $\Ch_\pi^{(\alpha)}$ 
in terms of the anisotropic free cumulants,
thus the coefficients $c_G$ in \eqref{eq:expansion-not-explicit} constructed 
by the above reasoning are not very explicit.
Moreover, let us recall that an $N^{(\alpha)}_G$-expansion --- if it exists --- 
is not unique (\cref{remark:not-unique})
and the expansion given by this proof is definitely \emph{not} the one we are looking for in \cref{conj:main}
(the above proof of \cref{prop:existence-N-expansion} gives a sum over \emph{forests}, 
while \cref{conj:main} requires that a coefficient of a forest should be independent of $\alpha$).

\subsection{Existence of Stanley polynomials}
\label{sec:existence-stanley-polynomials}

\begin{lemma}
\label{lem:stanley-polynomials-exist}
For each partition $\pi$, 
both the corresponding Jack character
\[ \Ch^{(\alpha)}_{\pi} \left(  \sqrt{\alpha} P \times \frac{1}{\sqrt{\alpha}} Q\right) \]
and the orientability generating series
\[ \newCh^{(\alpha)}_{\pi} \left(  \sqrt{\alpha} P \times \frac{1}{\sqrt{\alpha}} Q\right) \]
can be expressed (in a unique way) as a polynomial in the indeterminates \eqref{eq:stanley-coordinates-enchanced}.
\end{lemma}
\begin{proof}
We often viewed $\gamma=\gamma(\alpha)$ given by \eqref{eq:def-gamma} as a function of the parameter $\alpha$.
It will be convenient now to reverse the optics; for a given $\gamma\in\R$ we define $\alpha(\gamma)$ as
the unique positive solution of the equation
\[ \gamma = \frac{1-\alpha}{\sqrt{\alpha}}. \]

Let us fix the value of an integer $\ell\geq 1$.
We say that a function $F$ on the set of Young diagrams has \emph{the polynomiality property} if the map
\[ (p_1,\dots,p_\ell,q_1,\dots,q_\ell,\gamma) \mapsto 
F\left(  \sqrt{\alpha(\gamma)}\ P \times \frac{1}{\sqrt{\alpha(\gamma)}} Q\right) \]
coincides with some polynomial in indeterminates \eqref{eq:stanley-coordinates-enchanced}
(with coefficients in $\QQ$)
on the set
\[    \widetilde{PQ} := 
\left\{(P,Q,\gamma) : 
\sqrt{\alpha(\gamma)}\ P \times \frac{1}{\sqrt{\alpha(\gamma)}} Q 
\text{ is a Young diagram} \right\}.
\]

For any bicolored graph $G$, we claim that the function $N_G^{(\alpha)}$ has 
the polynomiality property. Indeed,
a slight variation of \cite[Lemma 3.9]{FeraySniady2011}
implies that $N_G^{(1)}(P \times Q)$ can be expressed as a polynomial (with non-negative, integer coefficients)
in the indeterminates $p_1,\dots,p_\ell,q_1,\dots,q_\ell$
with degree $|\V_\bullet(G)|$ in the variables $P$
and degree $|\V_\circ(G)|$ in the variables $Q$.
Therefore $N_G^{(\alpha)}(P \times Q)$ (see \eqref{eq:ng-alpha}) is
a polynomial in the indeterminates
\[ \frac{1}{\sqrt{\alpha}}\ p_1,\dots,\frac{1}{\sqrt{\alpha}}\ p_\ell,
\sqrt{\alpha}\ q_1,\dots,\sqrt{\alpha}\ q_\ell. \]
Equivalently, $N_G^{(\alpha)}$ has the polynomiality property.

Clearly, any linear combination of the functions $N_G^{(\alpha)}$ with coefficients in $\QQ[\gamma]$, 
also has the polynomiality property.
In particular (by \cref{prop:existence-N-expansion} and definition \eqref{eq:conjectured-stanley} respectively), 
so do $\Ch^{(\alpha)}_{\pi}$ and $\newCh^{(\alpha)}_{\pi}$.

\medskip

In order to prove uniqueness, it is enough to show that whenever a polynomial $F$ in indeterminates \eqref{eq:stanley-coordinates-enchanced}
vanishes on the set $\widetilde{PQ}$,
then $F$ is identically equal to $0$.
Consider such a polynomial $F$ and set $d$ to be its degree.
Let us fix $\alpha_0 >0$ and denote $\gamma_0 = \frac{1-\alpha_0}{\sqrt{\alpha_0}}$.
We define 
\begin{align*}
    S &= \{ i/\sqrt{\alpha_0} : 1 \leq i \leq d+1\}, \\ 
    S_j &= \{ (j(d+1)+i) \cdot \sqrt{\alpha_0} : 1\leq i \leq d+1\} \quad 
\text{for } 0 \leq j \leq l-1.
\end{align*}
Strictly from the definition, we have the following inclusion:
\[ \underbrace{S\times\cdots\times S}_{\text{$l$ factors}} \times 
S_{l-1} \times \cdots \times S_{0} \times \{\gamma_0\}\subset \widetilde{PQ},\]
so that $F$ vanishes on this Cartesian product.
It is easy to prove (for example, by induction over $n$, the number of the variables)
that, if a polynomial of degree $d$
\[ f \in \R[x_1,\dots,x_n]\]
vanishes on a Cartesian product $T_1\times\cdots\times T_n \subset \R^n$,
with $|T_i| > d$ (for $1 \le i \le n$), then
$f$ is identically equal to $0$.
Therefore --- for any fixed value of $\gamma_0$ --- $F(P,Q,\gamma_0)$ is equal to $0$
as a polynomial in $P$ and $Q$.
As this holds for any choice of the real number $\gamma_0$,
we conclude that
$F(P,Q,\gamma)$ is equal to $0$ as a polynomial in $P$, $Q$ and $\gamma$,
as wanted.
\end{proof}

\section{The measure of non-orientability of maps}
\label{sec:maps}

\subsection{Pairings and polygons}
\label{SubsectPairingsPolygons}
A \emph{set-partition} of a set $S$ is a collection \linebreak
$\{I_1,\cdots,I_r\}$ of pairwise disjoint, non-empty subsets,
the union of which is equal to $S$.

A \emph{pairing} (or, alternatively, \emph{pair-partition}) of $S$ 
is a set-partition into pairs.
If $s$ is an element of $S$ and $P$ is a pairing of $S$,
the \emph{partner} of $s$ in $P$ is defined as the unique element $t \in S$
such that $\{s,t\}$ is a pair of $P$.

For instance, for any integer $n \ge 1$,
\[P=\big\{ \{1,2\},\{3,4\},\ldots,\{2n-1,2n\} \big\} \]
is a pairing of $[2n]$ (we use the standard notation $[n]:=\{1,\cdots,n\}$).
Note that the existence of a pairing of $S$ clearly implies
that $|S|$ is even.

Let us consider now two pairings $\BC,\WC$ of the same set $S$ consisting of $2n$ elements.
We consider the following bicolored, edge-labeled graph $\loops(\BC,\WC)$:
\begin{itemize}
    \item it has $n$ black vertices indexed by the pairs of $\BC$
        and has $n$ white vertices indexed by the pairs of $\WC$;
    \item its edges are labeled with the elements of $S$.
        The extremities of the edge labeled $i$ are the pair of $\BC$ containing $i$
        and the pair of $\WC$ containing $i$.
\end{itemize}

Note that each vertex has degree $2$ and each edge has one white and one black
extremity. Besides, if we erase the indices of the vertices, it is easy to
recover them from the labels of the edges (the index of a vertex is the set of
the two labels of the edges leaving this vertex). Thus, in the following we forget the indices
of the vertices and view $\loops(\BC,\WC)$ as an edge-labeled graph.

As every vertex has degree $2$, the graph $\loops(\BC,\WC)$ can be seen as a collection of
polygons. Moreover, because of the proper bicoloration of the vertices, each polygon
has an even length. Let $2\ell_1\geq 2\ell_2\geq\cdots$ be the ordered lengths of
these polygons. The partition $(\ell_1,\ell_2,\dots)$ is called the \emph{type} of
$\loops(\BC,\WC)$ or the \emph{type} of the couple $(\BC,\WC)$. 

Special role will be played by the polygons having exactly $2$ edges. 
Such a polygon will be referred to as \emph{bigon}.

\begin{example}
\label{example:1}
For partitions
\begin{align*}
    \BC &= \big\{ \{1,2\},\{3,4\},\{5,6\},\{7,8\},\{9,10\}, \{A,B\},\{C,D\} \big\}, \\
    \WC &= \big\{ \{2,3\},\{4,5\},\{6,7\},\{8,9\},\{10,1\}, \{B,C\},\{D,A\} \big\}
\end{align*}
the corresponding polygons $\loops(\BC,\WC)$ are shown in \cref{fig:polygons}.
\end{example}

\newcommand{\faceA}{red!50}
\newcommand{\faceB}{blue}
\newcommand{\faceAfill}{red!10}
\newcommand{\faceBfill}{blue!20}

\begin{figure}[t]
\begin{tikzpicture}[scale=0.5,
black/.style={circle,thick,draw=black,fill=white,inner sep=4pt},
white/.style={circle,draw=black,fill=black,inner sep=4pt},
connection/.style={draw=black,black,auto}
]
\small

\fill[pattern color=\faceAfill,pattern=north west lines]  (0*36:5) -- (1*36:5) -- (2*36:5) -- (3*36:5) -- (4*36:5) -- (5*36:5) -- (6*36:5) -- (7*36:5) -- (8*36:5) -- (9*36:5);

\draw (0*36:5)  node (b1)     [black] {};
\draw (1*36:5)  node (b2)     [white] {};
\draw (2*36:5)  node (b3)     [black] {};
\draw (3*36:5)  node (b4)     [white] {};
\draw (4*36:5)  node (b5)     [black] {};
\draw (5*36:5)  node (b6)     [white] {};
\draw (6*36:5)  node (b7)     [black] {};
\draw (7*36:5)  node (b8)     [white] {};
\draw (8*36:5)  node (b9)     [black] {};
\draw (9*36:5)  node (b10)    [white] {};

\draw (0.5*36:4.1)  node {$1$};
\draw (1.5*36:4.1)  node {$2$};
\draw (2.5*36:4.1)  node {$3$};
\draw (3.5*36:4.1)  node {$4$};
\draw (4.5*36:4.1)  node {$5$};
\draw (5.5*36:4.1)  node {$6$};
\draw (6.5*36:4.1)  node {$7$};
\draw (7.5*36:4.1)  node {$8$};
\draw (8.5*36:4.1)  node {$9$};
\draw (9.5*36:4.1)  node {$10$};

\draw[connection] (b1) to  (b2);
\draw[connection] (b2) to  (b3);
\draw[connection] (b3) to  (b4);
\draw[connection] (b4) to  (b5);
\draw[connection] (b5) to  (b6);
\draw[connection] (b6) to  (b7);
\draw[connection] (b7) to  (b8);
\draw[connection] (b8) to  (b9);
\draw[connection] (b9) to  (b10);
\draw[connection] (b10)to (b1);

\begin{scope}[shift={(10,0)},scale=0.5]
\fill[\faceBfill]  (0:5) -- (90:5) -- (180:5) -- (270:5);

\draw (90*0:5)  node (b1)     [black] {};
\draw (90*1:5)  node (b2)     [white] {};
\draw (90*2:5)  node (b3)     [black] {};
\draw (90*3:5)  node (b4)     [white] {};

\draw (90*0.5:2.2) node {$A$};
\draw (90*1.5:2.2) node {$B$};
\draw (90*2.5:2.2) node {$C$};
\draw (90*3.5:2.2) node {$D$};

\draw[connection] (b1) to  (b2);
\draw[connection] (b2) to  (b3);
\draw[connection] (b3) to  (b4);
\draw[connection] (b4) to  (b1);
\end{scope}
\end{tikzpicture}

\caption{Polygons obtained from the couple of pairings from \cref{example:1}.}
\label{fig:polygons}

\vspace{3ex}

\begin{tikzpicture}[scale=0.6,
black/.style={circle,thick,draw=black,fill=white,inner sep=4pt},
white/.style={circle,draw=black,fill=black,inner sep=4pt},
connection/.style={draw=black,black,auto}
]
\scriptsize

\begin{scope}
\clip (0,0) rectangle (10,10);

\fill[pattern color=\faceAfill,pattern=north west lines] (0,0) rectangle (10,10);
\fill[\faceBfill] (5,5) rectangle (8,8);

\draw (2,7)  node (b1)     [black] {};
\draw (12,3) node (b1prim) [black] {};
\draw (3,2)  node (w1) [white] {};
\draw (-7,8) node (w1prim) [white] {};

\draw (5,5)  node (AA)     [black] {};
\draw (8,5)  node (BA)     [white] {};
\draw (5,8)  node (AB)     [white] {};
\draw (8,8)  node (BB)     [black] {};

\draw[connection]         (w1)      to node [pos=0.6] {$4$} node [swap,pos=0.6] {$9$} (AA);
\draw[connection]         (AA)      to node {$5$} node [swap] {$D$} (AB);
\draw[connection]         (AB)      to node {$6$} node [swap] {$C$} (BB);
\draw[connection]         (BB)      to node {$7$} node [swap] {$B$} (BA);
\draw[connection]         (BA)      to node {$8$} node [swap] {$A$} (AA);

\draw[connection]         (w1)      to node [pos=0.5] {$10$} node [swap,pos=0.425] {$2$} (b1prim);
\draw[connection]         (w1prim)  to node [pos=0.87] {$2$} node [swap,pos=0.95] {$10$} (b1);
\draw[connection]         (w1)      to node [pos=0.625] {$1$}  node [swap,pos=0.5] {$3$} (b1);

\end{scope}

\draw[very thick,decoration={
    markings,
    mark=at position 0.666  with {\arrow{>}}},
    postaction={decorate}]  
(0,0) -- (10,0);

\draw[very thick,decoration={
    markings,
    mark=at position 0.666  with {\arrow{>}}},
    postaction={decorate}]  
(10,10) -- (0,10);

\draw[very thick,decoration={
    markings,
    mark=at position 0.666  with {\arrow{>>}}},
    postaction={decorate}]  
(10,0) -- (10,10);

\draw[very thick,decoration={
    markings,
    mark=at position 0.666  with {\arrow{>>}}},
    postaction={decorate}]  
(0,10) -- (0,0);

\end{tikzpicture}
\caption{Example of a \emph{non-oriented map} drawn on the projective plane.
The left side of the square should be glued with a twist to the right side, 
as well as bottom to top (also with a twist), as indicated by arrows. 
This map has been obtained by gluing the edge-sides of the polygon from \cref{fig:polygons} 
according to the pair-partition given by Eq.~\eqref{eq:example-pairing}.}
\label{fig:map-projective-plane}

\end{figure}

\begin{definition}
\label{def:even-and-odd}
Let $s_1$ and $s_2$ be two elements of $S$ that belong to
the same polygon of $\loops(P_1,P_2)$.
Fix an arbitrary orientation of this polygon.
Then, one can consider the number of elements of $S$
between $s_1$ in $s_2$ in the polygon.
We say that $s_1$ and $s_2$ are in an \emph{even (respectively, odd) position}
if this number is even (respectively, odd).
As each polygon has an even size, this definition does not depend
on the choice of the orientation.
For example, in \cref{example:1} the elements $4$ and $9$ are in an even position, 
while the elements $1$ and $3$ are in an odd position.
\end{definition}

\subsection{Non-oriented maps}

\label{subsec:non-oriented}

The central combinatorial object in this paper is the following one.
\begin{definition}
\label{def:set-theoretic-definitions}
    An \emph{(unoriented) map} 
    is a triplet $M=(\BC,\WC,\E)$ of pairings of the same set $S$.
    \label{DefUmaps}
\end{definition}

The terminology comes from the fact that it is possible
to represent such a triplet of pair-partitions as a bicolored graph embedded in a non-oriented 
(and possibly non-connected) surface.
Let us explain how this works.

First, we consider the union of the polygons $\loops(\BC,\WC)$
defined in \cref{SubsectPairingsPolygons}.
The edges of these polygons, that is the elements of the set $S$,
are called \emph{edge-sides}.

We consider the union of the interiors of these polygons as a (possibly disconnected) surface
with a boundary.
If we consider two edge-sides, we can \emph{glue} them:
that means that we identify  with each other their white extremities, 
their black extremities, and the edge-sides themselves.

For any pair in the pairing $\E$, we glue the two corresponding edge-sides.
In this way we obtain a (possibly disconnected, possibly non-orientable)
surface $\varSigma$ without boundary.
After the gluing, the edges of the polygons form a bicolored graph $G$ embedded in the surface.
For instance, with the pairings $\BC$ and $\WC$ from \cref{example:1} and
\begin{equation}
\label{eq:example-pairing}
 \E = \big\{ \{1,3\},\{2,10\},\{4,9\},\{5,D\},\{6,C\}, \{7,B\},\{8,A\} \big\},  
\end{equation}
we get the graph from \cref{fig:map-projective-plane} embedded in the projective plane.

In general, the graph $G$ has as many connected components as the surface $\varSigma$.
Besides, the connected components of $\varSigma \setminus G$
correspond to the interiors of the collection of polygons we are starting from
and, thus, are homeomorphic to open discs.
These connected components are called \emph{faces}. 

This makes the link with the more common definition of the maps:
usually, a (non-oriented, bicolored) map is defined as a (bicolored)
\emph{connected} graph $G$ embedded in a (non-oriented) surface $\varSigma$ in such a way that
each connected component of $\varSigma \setminus G$ is homeomorphic to an open disc.
It should be stressed that with our definition --- contrary to the traditional convention --- 
we do not require the map to be \emph{connected}.

\begin{definition}
\label{def:set-theoretic-definitions2}
    Let $M=(\BC,\WC,\E)$ be a map.
    \begin{itemize}
        \item The elements of $\BC$ (respectively, $\WC$) are called black (respectively, white) \emph{corners}.
        \item The elements of $\E$ are called \emph{edges}; 
            we use the notation $\E(M)$ for the set of edges of a map
            (that is the third element of the triplet defining the map).
        \item The polygons $\loops(\BC,\WC)$ corresponding to the couple of pairings $(\BC,\WC)$
            are called \emph{faces}; the set of faces will be denoted by $\F(M)$.
            The \emph{face-type} of the map is the type
            of the couple $(\BC,\WC)$, as defined in \cref{SubsectPairingsPolygons}.
        \item The polygons $\loops(\BC,\E)$ (respectively, $\loops(\WC,\E)$)
            of the couple of pairings $(\BC,\E)$ (respectively, $(\WC,\E)$)
            are called \emph{black vertices} (respectively, \emph{white vertices});
            their set is denoted by $\V_\bullet(M)$ (respectively, $\V_\circ(M)$).
        \item A \emph{leaf} is a vertex of $M$ of degree $1$, that is
            a bigon of $\loops(\BC,\E)$ or $\loops(\WC,\E)$.
            In other terms, a leaf is a pair of edge-sides which belongs to both $\E$
            and $\BC$ or which belongs to both $\E$ and $\WC$.
        \item The \emph{connected components} of the map $M$ correspond
            to the connected components of the graph $G$ constructed above.
            Equivalently, they are the equivalence classes of the transitive
            closure of the relation: $x \sim y$ if $x$ is the partner of $y$
            in $\E$, $\BC$ or $\WC$.
    \end{itemize}
\end{definition}

Note that our maps have labeled edge-sides and
each element of $S$ is used exactly once as a label.

The pairing $\BC$ (respectively, $\WC$) indicates which edge-sides share
the same corner around a black (respectively, white) vertex.
This explains the names of these pairings.

This encoding of (non-oriented) maps by triplets of pairings
is of course not new.
It can for instance be found in \cite{Goulden1996a};
the presentation in that paper is nevertheless a bit different
as the authors consider there \emph{connected monochromatic} maps.

\subsection{Ribbon graphs}
\label{subsec:ribbon-graphs}

\begin{figure}[t]
\centering
\begin{tikzpicture}[scale=1,
white/.style={circle,thick,draw=black,fill=white,inner sep=4pt},
black/.style={circle,draw=black,fill=black,inner sep=4pt},
]
\begin{scope}
\clip (0,0) rectangle (5,5);
\fill[blue!3] (0,0) rectangle (5,5);
\node (v1) at (1.3,3.5) [black] {};
\node (v2) at (3.7,1.5) [white] {};
\draw (v2) +(-5,-0) to node [auto,swap,pos=0.7] {\footnotesize $2$} node [auto,pos=0.8] {\footnotesize $4$}
        (v1) to node [auto,swap] {\footnotesize $1$} node [auto] {\footnotesize $5$}
        (v2);
\draw (v1) +(5,0) to node [auto,swap,pos=0.7] {\footnotesize $4$} node [auto,pos=0.8] {\footnotesize $2$}
        (v2);
\draw (v1) to node [auto] {\footnotesize $3$} node [auto,swap] {\footnotesize $6$} (1.3,5);
\draw (v2) to node [auto] {\footnotesize $3$} node [auto,swap] {\footnotesize $6$} (3.7,0);
\end{scope}
\draw[very thick,decoration={
    markings,
    mark=at position 0.666  with {\arrow{>}}},
    postaction={decorate}]  
(0,0) -- (5,0);

\draw[very thick,decoration={
    markings,
    mark=at position 0.666  with {\arrow{>}}},
    postaction={decorate}]  
(5,5) -- (0,5);

\draw[very thick,decoration={
    markings,
    mark=at position 0.666  with {\arrow{>>}}},
    postaction={decorate}]  
(0,0) -- (0,5);

\draw[very thick,decoration={
    markings,
    mark=at position 0.666  with {\arrow{>>}}},
    postaction={decorate}]  
(5,0) -- (5,5);

\end{tikzpicture}
\caption{
Example of a map drawn on the Klein bottle: 
the left-hand side of the square should be glued to the right-hand one (without a twist) 
and the top side should be glued to the bottom one (with a twist), as indicated by the arrows.}
\label{fig:exampleA}

\centering
\begin{tikzpicture}[scale=0.5,
white/.style={circle,thick,draw=black,fill=white,inner sep=7pt},
black/.style={circle,draw=black,fill=black,inner sep=7pt},
connection/.style={auto,double distance=3mm}
]
\clip (-3,-8) rectangle (13,5);
\draw[black!80,connection] (0,0) .. controls (0,10) and (10,-20) .. (10,0); 
\draw[connection,draw=white,ultra thick] (0,0) .. controls (0,-20) and (10,10) .. (10,0); 
\draw[black!80,connection] (0,0) .. controls (0,-20) and (10,10) .. (10,0); 
\fill[fill=white,draw=white] (0,-3mm) rectangle (10,3mm);

\draw[draw=white,ultra thick] plot[smooth] file {ribbonStraightA.txt};
\draw[black!80] plot[smooth] file {ribbonStraightA.txt};
\draw[draw=white,ultra thick] plot[smooth] file {ribbonStraightB.txt};
\draw[black!80] plot[smooth] file {ribbonStraightB.txt};

\draw (0,0) node[black] {};
\draw (10,0) node[white] {};
\draw (4.5,0.3) node[above] {$3$};
\draw (4.5,-0.3) node[below] {$6$};

\draw (0,-4) node[anchor=east] {$1$};
\draw (0.5,-4) node[anchor=west] {$5$};

\draw (10,-4) node[anchor=west] {$4$};
\draw (9.5,-4) node[anchor=east] {$2$};
\end{tikzpicture}
\caption{The map from \protect\cref{fig:exampleA} drawn as a \emph{ribbon graph}.}
\label{fig:exampleB}
\end{figure}

For the purposes of the current paper it is sometimes convenient to represent 
a map as a \emph{ribbon graph} (see \cref{fig:exampleB}) as follows:
each vertex is represented as a small disc, 
and each edge is represented by a thin ribbon connecting two discs 
in a way that a walk along the boundary of the ribbons corresponds 
to the walk along the boundary of the faces of a given map. 

\subsection{Summation over maps with a specified face-type}
Summations over maps with a specified face-type
(such as in \cref{eq:conjectured-stanley,eq:alpha=2,eq:alpha=1/2})
should be understood as follows:
we fix a couple of pairings $(\BC,\WC)$ of type $\pi$
and consider all pairings $\E$ of the same ground set;
we sum over the resulting collection of maps $(\BC,\WC,\E)$.
The \emph{set of maps with a specified face-type} should be understood in an analogous way.

\subsection{Three kinds of edges}
\label{subsec:anatomy}

\begin{figure}[tbp]
\centering
\begin{tikzpicture}[scale=0.7,
black/.style={circle,thick,draw=black,fill=white,inner sep=4pt},
white/.style={circle,draw=black,fill=black,inner sep=4pt},
faceAs/.style={\faceA, dashed,  line width=6pt},
faceBs/.style ={\faceB, line width=6pt},
connection/.style={draw=black!80,black!80,auto}
]
\scriptsize

\begin{scope}
\clip (0,0) rectangle (10,10);

\fill[pattern color=\faceAfill,pattern=north west lines] (0,0) rectangle (10,10);

\draw (2,7)  node (b1)     [black] {};
\draw (12,3) node (b1prim) [black] {};
\draw (3,2)  node (w1) [white] {};
\draw (-7,8) node (w1prim) [white] {};

\draw (5,5)  node (AA)     [black] {};
\draw (8,5)  node (BA)     [white] {};
\draw (5,8)  node (AB)     [white] {};
\draw (8,8)  node (BB)     [black] {};

\draw[faceAs,left to-left to] (w1)      to (AA);
\draw[faceAs,left to-left to] (AA)      to (AB);
\draw[faceAs,left to-left to] (AB)      to (BB);
\draw[faceAs,left to-left to] (BB)      to (BA);
\draw[faceAs,left to-left to] (BA)      to (AA);
\draw[faceAs, ->] (w1)      to (b1prim);
\draw[faceAs, ->] (w1prim)  to (b1);
\draw[faceAs, <-] (w1)      to (b1);

\begin{scope}
\clip (5,5) -- (8,5) -- (8,8) -- (5,8);
\fill[\faceBfill] (5,5) rectangle (8,8);
\draw (5,5)  node (AA)     [black] {};
\draw (8,5)  node (BA)     [white] {};
\draw (5,8)  node (AB)     [white] {};
\draw (8,8)  node (BB)     [black] {};

\draw[faceBs, left to-left to] (AA) -- (AB);
\draw[faceBs, left to-left to] (AB) -- (BB);
\draw[faceBs, left to-left to] (BB) -- (BA);
\draw[faceBs, left to-left to] (BA) -- (AA);
\end{scope}

\draw[connection,draw=white,double=black,ultra thick]         (w1)      to node [pos=0.6] {\textcolor{black}{$4$}} node [swap,pos=0.6] {\textcolor{black}{$9$}} (AA);
\draw[connection,draw=white,double=black,ultra thick]         (AA)      to node {\textcolor{black}{$5$}} node [swap] {\textcolor{black}{$D$}} (AB);
\draw[connection,draw=white,double=black,ultra thick]         (AB)      to node {\textcolor{black}{$6$}} node [swap] {\textcolor{black}{$C$}} (BB);
\draw[connection,draw=white,double=black,ultra thick]         (BB)      to node {\textcolor{black}{$7$}} node [swap] {\textcolor{black}{$B$}} (BA);
\draw[connection,draw=white,double=black,ultra thick]         (BA)      to node {\textcolor{black}{$8$}} node [swap] {\textcolor{black}{$A$}} (AA);

\draw[connection,draw=white,double=black,ultra thick,pos=0.4] (w1)      to node [pos=0.5]  {\textcolor{black}{$10$}} node [swap,pos=0.425] {\textcolor{black}{$2$}} (b1prim);
\draw[connection,draw=white,double=black,ultra thick,pos=0.9] (w1prim)  to node [pos=0.82]  {\textcolor{black}{$2$}} node [swap,pos=0.9] {\textcolor{black}{$10$}} (b1);
\draw[connection,draw=white,double=black,ultra thick]         (w1)      to node [pos=0.625] {\textcolor{black}{$1$}} node [swap,pos=0.5] {\textcolor{black}{$3$}} (b1);

\end{scope}

\draw[very thick,decoration={
    markings,
    mark=at position 0.666  with {\arrow{>}}},
    postaction={decorate}]  
(0,0) -- (10,0);

\draw[very thick,decoration={
    markings,
    mark=at position 0.666  with {\arrow{>}}},
    postaction={decorate}]  
(10,10) -- (0,10);

\draw[very thick,decoration={
    markings,
    mark=at position 0.666  with {\arrow{>>}}},
    postaction={decorate}]  
(10,0) -- (10,10);

\draw[very thick,decoration={
    markings,
    mark=at position 0.666  with {\arrow{>>}}},
    postaction={decorate}]  
(0,10) -- (0,0);

\end{tikzpicture}
\caption{
The non-oriented map from \protect\cref{fig:map-projective-plane}.
On the boundary of each face some arbitrary orientation was chosen, as indicated by the arrows.
The edge $\{4,9\}$ is an example of a \emph{straight edge}, 
the edge $\{1,3\}$ is an example of a \emph{twisted edge}, 
the edge $\{6,C\}$ is an example of an \emph{interface edge}.}
\label{fig:map-orientation}

\end{figure}
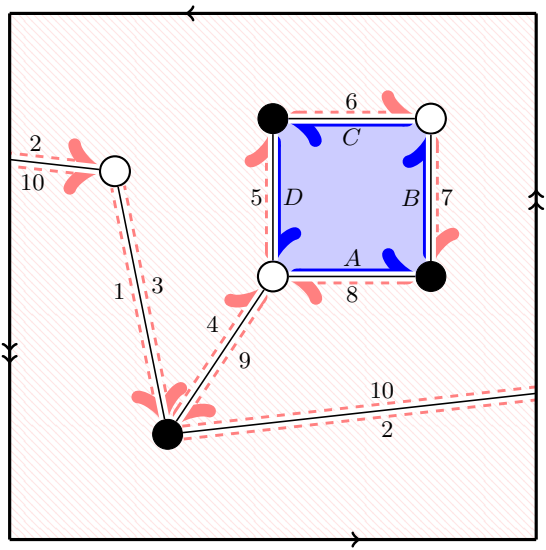

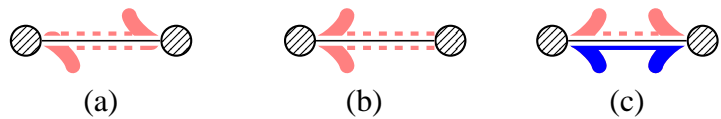
\begin{figure}[t]
\centering
\subfloat[]{
\begin{tikzpicture}[scale=0.5,
White/.style={circle,thick,draw=black,pattern=north east lines,inner sep=4pt},
Black/.style={circle,thick,draw=black,pattern=north east lines,inner sep=4pt},
connection/.style={draw=black!80,black!80,auto},
faceAs/.style={\faceA, dashed,  line width=6pt},
faceBs/.style ={\faceB, line width=6pt},
]

\draw[white] (-1,-1) rectangle (5,1);
\draw (0,0) node (b) [Black] {};
\draw (4,0) node (w) [White] {};

\draw[faceAs, line width=7pt, left to-left to] (w) to (b);
\draw[connection,draw=white,double=black,ultra thick] (w) to (b);
\end{tikzpicture}
\label{subfig:straight}
} 
\quad
\subfloat[]{
\begin{tikzpicture}[scale=0.5,
White/.style={circle,thick,draw=black,pattern=north east lines,inner sep=4pt},
Black/.style={circle,thick,draw=black,pattern=north east lines,inner sep=4pt},
connection/.style={draw=black!80,black!80,auto},
faceAs/.style={\faceA, dashed,  line width=6pt},
faceBs/.style ={\faceB, line width=6pt},
]

\draw[white] (-1,-1) rectangle (4,1);
\draw (0,0) node (b) [Black] {};
\draw (4,0) node (w) [White] {};

\draw[faceAs, line width=7pt, ->] (w) to (b);
\draw[connection,draw=white,double=black,ultra thick] (w) to (b);
\end{tikzpicture}
\label{subfig:mobius}
}
\quad
\subfloat[]{
\begin{tikzpicture}[scale=0.5,
White/.style={circle,thick,draw=black,pattern=north east lines,inner sep=4pt},
Black/.style={circle,thick,draw=black,pattern=north east lines,inner sep=4pt},
connection/.style={draw=black!80,black!80,auto},
faceAs/.style={\faceA, dashed,  line width=6pt},
faceBs/.style ={\faceB, line width=6pt},
]

\draw[white] (-1,-1) rectangle (4,1);
\draw (0,0) node (b) [Black] {};
\draw (4,0) node (w) [White] {};

\draw[faceAs, line width=7pt,<->] (w) to (b);
\begin{scope}
   \clip (-1,-1) rectangle (5,0);
   \draw[faceBs, line width=7pt, <->] (w) to (b);
\end{scope}

\draw[connection,draw=white,double=black,ultra thick] (w) to (b);
\end{tikzpicture}
\label{subfig:interface}
}
\caption{Three possible kinds of edges in a map (see \cref{fig:map-orientation}):
\protect\subref{subfig:straight} \emph{straight edge}: 
both edge-sides of the edge belong to the same face and have opposite orientations, 
\protect\subref{subfig:mobius} \emph{twisted edge}: both edge-sides of the edge belong to the same face 
and have the same orientation, 
\protect\subref{subfig:interface} \emph{interface edge}: the edge-sides of the edge belong to two different faces; 
their orientations are not important.
In all three cases the colors of the vertices are not important.}
\label{fig:3cases}
\end{figure}

Let a map $M$ with some selected edge $E=\{s_1,s_2\}$ be given.
We distinguish three cases (a schematic description and an example
of each case are given in Figures~\ref{fig:map-orientation} and
\ref{fig:3cases}): 
\begin{itemize}
 \item Both edge-sides $s_1$ and $s_2$ belong to the same face $F$
     \emph{and} are in an even position (see \cref{def:even-and-odd}).

     Graphically, this means that if we travel
     along the boundary of the face $F$ then we visit the edge $E$ twice \emph{and} 
the directions in which we travel twice
along the edge $E$ are \emph{opposite}, see \cref{subfig:straight}. 

In this case the edge $E$ is called \emph{straight} and we associate to it the
weight 
\[\weight_{M,E}:=1.\]

 \item Both edge-sides $s_1$ and $s_2$ belong to the same face $F$ 
     \emph{and} are in an odd position.
          
       Graphically, this means that if we travel
along the boundary of the face $F$, we visit the edge $E$ twice
\emph{and} the directions in which we travel twice
along the edge $E$ are \emph{the same}, see \cref{subfig:mobius}.

In this case the edge $E$ is called \emph{twisted} and we associate to it the
weight 
\[\weight_{M,E}:=\gamma= \frac{1-\alpha}{\sqrt{\alpha}}.\]

 \item Edge-sides $s_1$ and $s_2$ belong to different faces of the map,
     see \cref{subfig:interface}. 

In this case the edge $E$ is called \emph{interface} and we associate to it the
weight 
\[\weight_{M,E}:=\frac{1}{2}.\] 
\end{itemize}

\subsection{Removal of edges}

Let $P$ be a pairing of a set $S$ and $s_1, s_2$ be two distinct elements of $S$.
We define a pairing $P_{\{s_1,s_2\}}$
of the set $S \setminus \{s_1,s_2\}$ as follows:
\begin{itemize}
    \item if $\{s_1,s_2\}$ is a pair of $P$, 
        then \[P_{\{s_1,s_2\}}:=P \setminus \big\{\{s_1,s_2\}\big\};\]
    \item otherwise, consider the partners $t_1$ and $t_2$ of, respectively, $s_1$ and $s_2$.
        The elements $s_1,s_2,t_1,t_2$ are all distinct.
        We define
        \[P_{\{s_1,s_2\}}:=\left( P \setminus \big\{\{s_1,t_1\},\{s_2,t_2\}\big\} \right) \cup \{t_1,t_2\}. \]
        In other words, we remove the two pairs containing $s_1$ or $s_2$
        and we match together the two unmatched elements of $S \setminus \{s_1,s_2\}$.
\end{itemize}

Let $M=(\BC,\WC,\E)$ be a map and $E\in\E$ be an edge of $M$.
Then we define $M\setminus\{E\}$ (or, shortly, $M \setminus E$)
as the triplet 
\[
M\setminus\{E\}=M\setminus E:= (\BC_E,\WC_E,\E_E) 
\]
which consists of the three original pairings with the edge $E$ removed.

Graphically, this corresponds to the removal of the edge $E$ from the map $M$.
If one extremity (or both extremities) of this edge is a leaf (are leaves),
we also remove this vertex (both vertices). 
Note that a 
removal of an edge might drastically change
the topology of the surface on which the map is drawn;
for this reason it might be more convenient to use the graphical representation of a map as a \emph{ribbon graph}
(see \cref{subsec:ribbon-graphs}) 
because the effect of an edge removal from a map in this framework is much simpler, 
namely we just remove the appropriate edge from the corresponding ribbon graph.

\begin{lemma}
\label{lem:removal-commutative}
    Let $P$ be a pairing of a set $S$ and $s_1, s_2,s_3,s_4$ be four
    distinct elements of $S$.
    Then \[ \left(P_{\{s_1,s_2\}} \right)_{\{s_3,s_4\}}=
    \left(P_{\{s_3,s_4\}} \right)_{\{s_1,s_2\}}.\]
    \label{LemCommutativityRemoval}
\end{lemma}
\begin{proof}
    Easy case by case analysis.
\end{proof}

The above lemma implies that for any two distinct edges $E_1$, $E_2$ of a map $M$ one has
\[\big(M\setminus E_1 \big)\setminus E_2 =\big(M\setminus E_2 \big)\setminus E_1 ,\]
which allows to define the map 
\[ M \setminus \{E_1,\dots,E_i\}:=\Big( \big(M\setminus E_1 \big)\setminus \cdots \Big) \setminus E_i\] 
for an arbitrary subset $\{E_1,\dots,E_i\}\subseteq \E(M)$ of the edges of $M$ by an iterative procedure.
The fact that the above object is well-defined (i.e., it does not depend on the choice of the order
of the edges $E_1,\dots,E_i$)
can be also easily justified in a more intuitive way by referring to the ribbon graphs.

\subsection{Effect of an edge removal on the faces}
\label{subsec:effect-of-edge-removal-on-faces}

Let $E=\{s_1,s_2\}$ be one of the edges of the map $M$.
In the following we will investigate the faces of the map $M \setminus E$,
i.e.~the polygons associated to $(\BC_E,\WC_E)$.

\subsubsection{Removal of a straight edge}
\label{subsec:split}

Suppose that $E$ is a straight edge of the map $M$.
By definition, it means that the two edge-sides $s_1$ and $s_2$ of $E$
belong to the same face $F$ of $M$.
Besides, if we fix some orientation on the face $F$, there is
an even number, let us say $2i$, of the edge-sides between $s_1$ and $s_2$ in $F$.
With the other orientation there would also be an even number,
let us say $2j$, of the edge-sides between $s_1$ and $s_2$ in $F$.
In particular, the face $F$ consists of $2i+2j+2$ edge-sides. 
When we remove the edge $E$, the face $F$ is split into two faces $F_1$
and $F_2$ which consist of, respectively, $2i$ and $2j$ edge-sides
(in the degenerate case when $i$ or $j$ is equal to $0$,
the corresponding face does not exist).

This can be easily seen graphically, see \cref{fig:split}.
This figure, and the similar ones hereafter, should be understood as follows.
\begin{itemize}
    \item We represent on the figures only the polygon(s) containing the edge-sides
        $s_1$ and $s_2$; on the left-hand side in the map $M$ and on the right-hand side
        in the map $M \setminus E$. The other polygons are not modified by the edge removal.
    \item The edge-sides which are to be removed, namely $s_1$ and $s_2$, are indicated by thick dashed lines.
    \item For improved readability, the edge-side labels are written on the outside of the face.
        For $i\in\{1,2\}$ we denote by $t^\BC_i$ the partner of $s_i$ in $\BC$ and
	  by $t^\WC_i$ the partner of $s_i$ in $\WC$. 
           
  \item Consider the black (respectively, white) extremity of the edge $E=\{s_1,s_2\}$ in the map $M$.
      This vertex of $M$
       corresponds to the two black (respectively, white) vertices of the polygons,  
       namely to the black (respectively, white) extremities of the edge-sides $s_1$ and $s_2$;
       note that in a degenerate case these two extremities might coincide.  
       These two black (respectively, white) vertices are 
       decorated on the left-hand side of our figures with a square shape (respectively, with a diamond shape).
       After removal of the edge $E$ these two vertices are glued together into a single black (respectively, white)
       vertex which on the right-hand side of our figures is also decorated with the same square shape 
       (respectively, diamond shape).
\end{itemize}

\newcommand{\faceCfill}{black!50!green!10}
\begin{figure}[t]
    \begin{tikzpicture}
    \tikzstyle{bv}=[circle,fill=black,inner sep=0pt,minimum size=2mm]
    \tikzstyle{wv}=[circle,thick,draw=black,fill=white,inner sep=0pt,minimum size=2mm]
    \begin{scope}[xshift=.65cm]
    \coordinate (V1) at (67.5:2);
    \coordinate (V2) at (22.5:2);
    \coordinate (V3) at (-22.5:2);
    \coordinate (V4) at (-67.5:2);
    \end{scope}

    \begin{scope}[rotate=180,xshift=.65cm]
    \coordinate (W1) at (67.5:2);
    \coordinate (W2) at (22.5:2);
    \coordinate (W3) at (-22.5:2);
    \coordinate (W4) at (-67.5:2);
    \end{scope}

    \fill[pattern color=\faceAfill,pattern=north west lines]  
	  (W1) -- (W2) -- (W3) -- (W4) -- (V1) -- (V2) -- (V3) -- (V4) -- (W1);

    \node (v1) at (V1) [wv] {};
    \node (v2) at (V2) [bv] {};
    \node (v3) at (V3) [wv] {};
    \node (v4) at (V4) [bv] {};

    \draw (v1) to node [auto] {\footnotesize $t^{\BC}_2$} (v2);
    \draw[ultra thick,dashed] (v2) to node [auto] {\footnotesize $s_2$} (v3);
    \draw (v3) to node [auto] {\footnotesize $t^{\WC}_2$} (v4);

    \node (w1) at (W1) [bv] {};
    \node (w2) at (W2) [wv] {};
    \node (w3) at (W3) [bv] {};
    \node (w4) at (W4) [wv] {};

    \draw (w1) to node [auto] {\footnotesize $t^{\WC}_1$} (w2);
    \draw[ultra thick,dashed] (w2) to node [auto] {\footnotesize $s_1$} (w3);
    \draw (w3) to node [auto] {\footnotesize $t^{\BC}_1$} (w4);
        
    \node[draw=black, rectangle, minimum size=4mm] at (v2) {};
    \node[draw=black, rectangle, minimum size=4mm] at (w3) {};

    \node[draw=black, diamond, minimum size=4mm] at (v3) {};
    \node[draw=black, diamond, minimum size=4mm] at (w2) {};

    \draw[dashed] (w4) to (v1);
    \draw[dotted] (w1) to (v4);
    \draw[snake=brace] (w3)+(0,1.5) to node [auto] {$2j$ edges between $s_1$ and $s_2$} (2.49,2.27);
    \draw[snake=brace] (v3)+(0,-1.5) to node [auto] {$2i$ edges between $s_2$ and $s_1$} (-2.49,-2.27);
     \draw[->] (3.3,0) to node [auto] {removal of} 
     node [auto,swap] {a straight edge} (5.8,0);

     \begin{scope}[xshift=8cm,yshift=.76cm]
         \coordinate (NV2) at (0,0);
         \coordinate (NV1) at (30:1.5);
         \coordinate (NW4) at (150:1.5);

         \fill[pattern color=\faceBfill,pattern=horizontal lines]
             (NV2) -- (NW4) to [bend left] (NV1) -- (NV2);

         \draw node (nv2) at (NV2) [bv] {};
         \draw node (nv1) at (NV1) [wv] {};
         \draw node (nw4) at (NW4) [wv] {};

         \draw (nv1) to node [auto] {\footnotesize $t^{\BC}_2$} (nv2);
         \draw (nv2) to node [auto] {\footnotesize $t^{\BC}_1$} (nw4);
         \draw[dashed]  (nw4) to [bend left] (nv1);
         \draw[snake=brace] (-1.3,1.5) to node [auto] {$2j$ edges in total} (1.3,1.5);
     \end{scope}

     \begin{scope}[xshift=8cm,yshift=-.76cm]
         \coordinate (NW2) at (0,0);
         \coordinate (NW1) at (-30:1.5);
         \coordinate (NV4) at (-150:1.5);

         \fill[pattern color=\faceCfill,pattern=vertical lines]
             (NW2) -- (NV4) to [bend right] (NW1) -- (NW2);

         \draw node (nw2) at (NW2) [wv] {};
         \draw node (nw1) at (NW1) [bv] {};
         \draw node (nv4) at (NV4) [bv] {};
         \draw (nw2) to node [auto] {\footnotesize $t^{\WC}_2$} (nw1);
         \draw (nw2) to node [auto,swap] {\footnotesize $t^{\WC}_1$} (nv4);
         \draw[dotted]  (nv4) to [bend right] (nw1);

         \draw[snake=brace] (1.3,-1.5) to node [auto] {$2i$ edges in total} (-1.3,-1.5);

    \node[draw=black, diamond, minimum size=4mm] at (nw2) {};
    \node[draw=black, rectangle, minimum size=4mm] at (nv2) {};

     \end{scope}
    \end{tikzpicture}
    \caption{The impact on the faces of a removal of a straight edge.}
    \label{fig:split}
\end{figure}
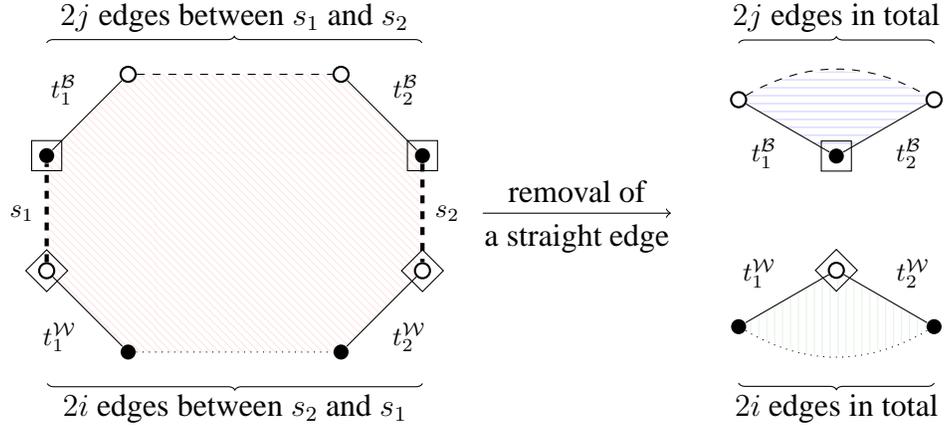

\subsubsection{Topological viewpoint on removal of an edge}
In order to give some topological intuition to the Reader,
we shall describe now and in the following 
(\cref{sec:topological-removal-straight-edge,sec:topological-removal-twisted-edge,sec:topological-removal-interface-edge})
the effect on the surface 
of a removal of an edge in each of the three cases.
As all the proofs in this paper are done rigorously with the 
\emph{``set theoretical point of view on maps''} 
(\cref{def:set-theoretic-definitions,def:set-theoretic-definitions2}, etc.),
we permit ourselves to stay a bit informal in these topological considerations.
For backgrounds on non-orientable surfaces, we refer to 
\cite[Section 2.1]{La2009} and the references therein.

We denote by $\varSigma$ (respectively, $\varSigma_e$) the surface on which
the map $M$ (respectively, $M \setminus E$) is naturally embedded; 
see \cref{subsec:non-oriented}.
We also denote by $G_M\subset \varSigma$ (respectively, by $G_{M\setminus E}\subset \varSigma_e$)
the graph associated to the map $M$ (respectively, to the map $M\setminus E$), viewed as a subset of the surface
$\varSigma$ (respectively, $\varSigma_e$).
Furthermore, $G_M\setminus E\subset \varSigma$ denotes the graph $G_M$ with the edge $E$ removed.

We shall compare:
\begin{enumerate}[label=(S\arabic*)]
   \item \label{surface:A}
the connected components of $\varSigma\setminus G_M$, i.e.~the faces of the map $M$,  
   \item \label{surface:B}
the connected components of
$\varSigma\setminus (G_M \setminus E)$, 
   \item \label{surface:C}
the connected components of $\varSigma_e\setminus (G_{M\setminus E})$, 
i.e.~the faces of the map $M\setminus E$.  
\end{enumerate}
Note that, by definition, each connected component in \ref{surface:A} and \ref{surface:C}
is homeomorphic to an open disc, while \emph{a~priori} some connected components in \ref{surface:B}
might be \emph{not} homeomorphic to a disc.
If this is indeed the case, then the surface $\Sigma$ does not fulfill
the defining property of the surface $\Sigma_e$ and thus 
$\varSigma\neq\varSigma_e$.

\subsubsection{Removal of a straight edge --- topological viewpoint}
\label{sec:topological-removal-straight-edge}
We come back now from the general considerations to the special case when the edge $E$ 
is a straight edge of the map $M$.
The edge-sides $s_1$ and $s_2$ of $E$ lie on the border of the same face $F$
(i.e., on the border of same connected component considered in \ref{surface:A}),
which --- by definition --- is homeomorphic to an open disc.
The parity condition on the relative position of the edge-sides $s_1$ and $s_2$ in $F$ implies that the 
situation depicted in \cref{subfig:straight} occurs.
Thus if we erase the edge $E$ and consider the corresponding surface $\Sigma\setminus(G_M\setminus E)$,   
the face $F$ becomes homeomorphic to an annulus, \emph{i.e.} the set $\{z \in \C: 1<|z|<2\}$.
In other terms, one of the components in \ref{surface:B} is an annulus
(each other is homeomorphic to an open disc).
In order to get the surface $\varSigma_e$, we must replace this annulus by two disjoint open discs
(both objects have the same border) which will be the elements of \ref{surface:C}.
This is in accordance with the fact that the face $F$ of $M$
is split in $M \setminus E$ into a pair of faces.

In the other direction, in order to transform $\varSigma_e$ into $\varSigma$,
one has to take two disjoint open discs lying on the surface $\varSigma_e$ and replace them by an annulus,
which corresponds to adding a \emph{handle} to the surface.

\subsubsection{Removal of a twisted edge}
\label{subsec:twist}
Suppose $E$ is a twisted edge of the map $M$.
By definition, it means that the two edge-sides $s_1$, $s_2$ of $E$
belong to the same face $F$ of the map $M$ and are in an odd position.
Let us denote by $2r$ the number of the edge-sides of the face $F$.
Then, after removal of the edge $E$, the face $F$ is replaced
by a face with $2(r-1)$ edge-sides; see \cref{fig:twist}.
The other faces are not modified.

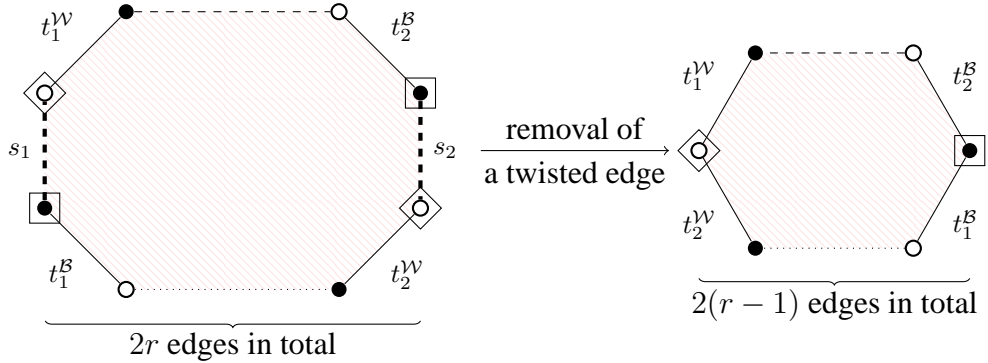
\begin{figure}[t]
    \begin{tikzpicture}
    \tikzstyle{bv}=[circle,fill=black,inner sep=0pt,minimum size=2mm]
    \tikzstyle{wv}=[circle,thick,draw=black,fill=white,inner sep=0pt,minimum size=2mm]
    \begin{scope}[xshift=.65cm]
    \coordinate (V1) at (67.5:2);
    \coordinate (V2) at (22.5:2);
    \coordinate (V3) at (-22.5:2);
    \coordinate (V4) at (-67.5:2);
    \end{scope}

    \begin{scope}[rotate=180,xshift=.65cm]
    \coordinate (W1) at (67.5:2);
    \coordinate (W2) at (22.5:2);
    \coordinate (W3) at (-22.5:2);
    \coordinate (W4) at (-67.5:2);
    \end{scope}

\fill[pattern color=\faceAfill,pattern=north west lines]  
(W1) -- (W2) -- (W3) -- (W4) -- (V1) -- (V2) -- (V3) -- (V4) -- (W1);

    \node (v1) at (V1) [wv] {};
    \node (v2) at (V2) [bv] {};
    \node (v3) at (V3) [wv] {};
    \node (v4) at (V4) [bv] {};

    \draw (v1) to node [auto] {\footnotesize $t^{\BC}_2$} (v2);
    \draw[ultra thick,dashed] (v2) to node [auto] {\footnotesize $s_2$} (v3);
    \draw (v3) to node [auto] {\footnotesize $t^{\WC}_2$} (v4);

    \node (w1) at (W1) [wv] {};
    \node (w2) at (W2) [bv] {};
    \node (w3) at (W3) [wv] {};
    \node (w4) at (W4) [bv] {};

    \draw (w1) to node [auto] {\footnotesize $t^{\BC}_1$} (w2);
    \draw[ultra thick,dashed] (w2) to node [auto] {\footnotesize $s_1$} (w3);
    \draw (w3) to node [auto] {\footnotesize $t^{\WC}_1$} (w4);        

    \draw[dashed] (w4) to (v1);
    \draw[dotted] (w1) to (v4);

    \node[draw=black, diamond, minimum size=4mm] at (w3) {};
    \node[draw=black, rectangle, minimum size=4mm] at (w2) {};
    \node[draw=black, diamond, minimum size=4mm] at (v3) {};
    \node[draw=black, rectangle, minimum size=4mm] at (v2) {};

    \draw[snake=brace] (v3)+(0,-1.5) to node [auto] {$2r$ edges in total} (-2.49,-2.27);
     \draw[->] (3.3,0) to node [auto] {removal of} 
     node [auto,swap] {a twisted edge} (5.8,0);
     
     \begin{scope}[xshift=8cm]
         \begin{scope}[xshift=.3cm]
             \coordinate (NV1) at (60:1.5);
             \coordinate (NV2) at (0:1.5);
             \coordinate (NV3) at (-60:1.5);
         \end{scope}
         \begin{scope}[rotate=180,xshift=.3cm]
             \coordinate (NW1) at (60:1.5);
             \coordinate (NW2) at (0:1.5);
             \coordinate (NW3) at (-60:1.5);
         \end{scope}
     \fill[pattern color=\faceAfill,pattern=north west lines]  
(NW1) -- (NW2) -- (NW3) -- (NV1) -- (NV2) -- (NV3) --  (NW1);

             \node (nv1) at (NV1) [wv] {};
             \node (nv2) at (NV2) [bv] {};
             \node (nv3) at (NV3) [wv] {};

             \node (nw1) at (NW1) [bv] {};
             \node (nw2) at (NW2) [wv] {};
             \node (nw3) at (NW3) [bv] {};

    \draw (nv1) to node [auto] {\footnotesize $t^{\BC}_2$} (nv2);
    \draw (nv2) to node [auto] {\footnotesize $t^{\BC}_1$} (nv3);
    \draw (nw1) to node [auto] {\footnotesize $t^{\WC}_2$} (nw2);
    \draw (nw2) to node [auto] {\footnotesize $t^{\WC}_1$} (nw3);
    \draw[dotted] (nv3) -- (nw1);
    \draw[dashed] (nv1) -- (nw3);
    \draw[snake=brace] (1.8,-1.7) to node [auto] {$2(r-1)$ edges in total}
        (-1.8,-1.7);
    \node[draw=black, diamond, minimum size=4mm] at (nw2) {};
    \node[draw=black, rectangle, minimum size=4mm] at (nv2) {};
     
	\end{scope}
     
    \end{tikzpicture}
    \caption{The impact on the faces of a removal of a twisted edge.}
    \label{fig:twist}
\end{figure}

\subsubsection{Removal of a twisted edge --- topological viewpoint}
\label{sec:topological-removal-twisted-edge}

Topologically, a removal of a twisted edge has the following effect.
Consider the map $M$ embedded in its surface $\varSigma$.
Again, both edge-sides $s_1$ and $s_2$ of $E$ are assumed to lie on the same face $F$ of $M$.
However, now the situation depicted on \cref{subfig:mobius} occurs and when we remove the edge $E$, 
the face $F$ becomes 
homeomorphic to a Möbius strip;
this Möbius strip,
denoted by $F'$,
is one of the connected components in \ref{surface:B}.
As the border of a Möbius strip is homeomorphic to a circle, 
one can remove $F'$ from $\varSigma$ and replace it by a disk;
this disk will be one of the elements of \ref{surface:C}.
In this way we obtain a surface $\Sigma'$ on which $G_M \setminus E$ is embedded in such a way that each
connected component of $\Sigma'\setminus(G_M\setminus E)$ is homeomorphic to an open disk.
This is the defining property of $\varSigma_e$, thus $\varSigma_e=\Sigma'$.

Conversely, $\varSigma$ is obtained from $\varSigma_e$ by replacing a disk by a Möbius strip or,
in other words, by adding a \emph{cross-cap}.

\subsubsection{Removal of an interface edge}
\label{subsec:join}
Suppose $E$ is an interface edge of $M$.
By definition, it means that the two edge-sides $s_1$, $s_2$ of $E$
belong to different faces $F_1$ and $F_2$ of $M$.
Let us denote by $2r$ and $2s$ the number of the edge-sides of the face $F_1$ and $F_2$, respectively.
Then after removal of the edge $E$, the faces $F_1$ and $F_2$ are replaced
by a single new face with $2(r+s-1)$ edge-sides; see \cref{fig:join}.
The other faces are not modified.

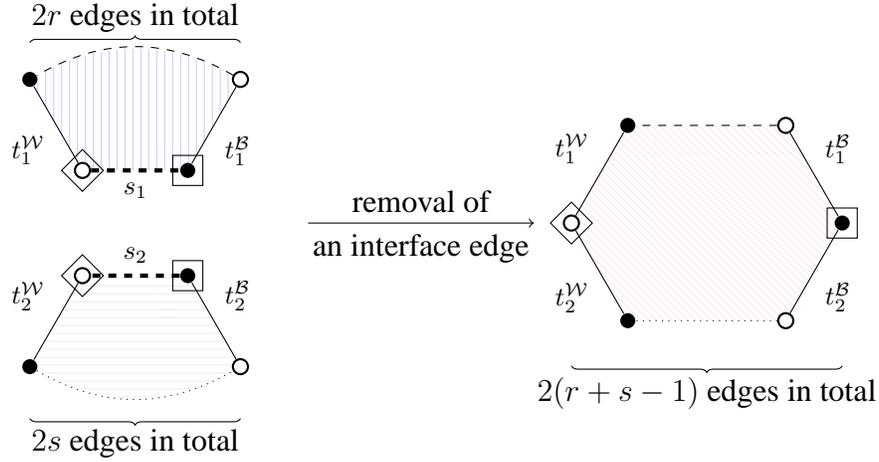
\begin{figure}[t]
    \begin{tikzpicture}
    \tikzstyle{bv}=[circle,fill=black,inner sep=0pt,minimum size=2mm]
    \tikzstyle{wv}=[circle,thick,draw=black,fill=white,inner sep=0pt,minimum size=2mm]
    \begin{scope}[yshift=0.7cm,scale=.7]
    \coordinate (V1) at (-2,1.73);
    \coordinate (V2) at (-1,0);
    \coordinate (V3) at (1,0);
    \coordinate (V4) at (2,1.73);
    \end{scope}  

    \fill[pattern color=\faceBfill,pattern=vertical lines]
             (V4) to [bend right] (V1) -- (V2) -- (V3);

    \node (v1) at (V1) [bv] {};
    \node (v2) at (V2) [wv] {};
    \node (v3) at (V3) [bv] {};
    \node (v4) at (V4) [wv] {};
    \draw (v1) to node [auto,swap] {\footnotesize $t^{\WC}_1$} (v2);
    \draw[ultra thick,dashed] (v2) to node [auto,swap] {\footnotesize $s_1$} (v3);
    \draw (v3) to node [auto,swap] {\footnotesize $t^{\BC}_1$} (v4);
    \draw[dashed] (v4) to [bend right] (v1);
        
    \node[draw=black, diamond, minimum size=4mm] at (v2) {};
    \node[draw=black, rectangle, minimum size=4mm] at (v3) {};
    \draw[snake=brace] (-1.4,2.4) to node [auto] {$2r$ edges in total} (1.4,2.4);
    \begin{scope}[rotate=180,yshift=.7cm,scale=.7] 
    \coordinate (W1) at (-2,1.73);
    \coordinate (W2) at (-1,0);
    \coordinate (W3) at (1,0);
    \coordinate (W4) at (2,1.73);
    \end{scope}  

    \fill[pattern color=\faceCfill,pattern=horizontal lines]
             (W4) to [bend right] (W1) -- (W2) -- (W3);

    \node (w1) at (W1) [wv] {};
    \node (w2) at (W2) [bv] {};
    \node (w3) at (W3) [wv] {};
    \node (w4) at (W4) [bv] {};
    \draw (w1) to node [auto,swap] {\footnotesize $t^{\BC}_2$} (w2);
    \draw[ultra thick,dashed] (w2) to node [auto,swap] {\footnotesize $s_2$} (w3);
    \draw (w3) to node [auto,swap] {\footnotesize $t^{\WC}_2$} (w4);
    \draw[dotted] (w4) to [bend right] (w1);

    \node[draw=black, diamond, minimum size=4mm] at (w3) {};
    \node[draw=black, rectangle, minimum size=4mm] at (w2) {};

    \draw[snake=brace] (1.4,-2.6) to node [auto] {$2s$ edges in total} (-1.4,-2.6);
     \draw[->] (2.3,0) to node [auto] {removal of} 
     node [auto,swap] {an interface edge} (5.3,0);
     \begin{scope}[xshift=7.6cm]
         \begin{scope}[xshift=.3cm]
             \coordinate (NV1) at (60:1.5);
             \coordinate (NV2) at (0:1.5);
             \coordinate (NV3) at (-60:1.5);
         \end{scope}
         \begin{scope}[rotate=180,xshift=.3cm]
             \coordinate (NW1) at (60:1.5);
             \coordinate (NW2) at (0:1.5);
             \coordinate (NW3) at (-60:1.5);
         \end{scope}

     \fill[pattern color=\faceAfill,pattern=north west lines]  
(NW1) -- (NW2) -- (NW3) -- (NV1) -- (NV2) -- (NV3) --  (NW1);

             \node (nv1) at (NV1) [wv] {};
             \node (nv2) at (NV2) [bv] {};
             \node (nv3) at (NV3) [wv] {};

             \node (nw1) at (NW1) [bv] {};
             \node (nw2) at (NW2) [wv] {};
             \node (nw3) at (NW3) [bv] {};
    \draw (nv1) to node [auto] {\footnotesize $t^{\BC}_1$} (nv2);
    \draw (nv2) to node [auto] {\footnotesize $t^{\BC}_2$} (nv3);
    \draw (nw1) to node [auto] {\footnotesize $t^{\WC}_2$} (nw2);
    \draw (nw2) to node [auto] {\footnotesize $t^{\WC}_1$} (nw3);
    \draw[dotted] (nv3) -- (nw1);
    \draw[dashed] (nv1) -- (nw3);
    \draw[snake=brace] (1.8,-1.9) to node [auto] {$2(r+s-1)$ edges in total}
        (-1.8,-1.9);
        
    \node[draw=black, diamond, minimum size=4mm] at (nw2) {};
    \node[draw=black, rectangle, minimum size=4mm] at (nv2) {};
     \end{scope}
    \end{tikzpicture}
    \caption{The impact on the faces of an interface edge removal.}
    \label{fig:join}
\end{figure}

\subsubsection{Removal of an interface edge --- topological viewpoint}
\label{sec:topological-removal-interface-edge}
Topologically, a removal of an interface edge is very simple.
Indeed, when we erase the edge $E$, the two faces $F_1$ and $F_2$ adjacent to $E$ in $M$ are merged into
a single connected component in \ref{surface:B} which is homeomorphic to an open disc.
So the condition that components of the surface without the graph
must be homeomorphic to open disks is not violated
and one can simply take $\varSigma_e=\varSigma$ as the surface on which 
$G_M \setminus E=G_{M\setminus E}$ is drawn.

\subsection{Weight associated to a map with a history}
Let $M$ be a map and let some linear order $\prec$ on its edges be given. This
linear order will be called
\emph{history}.

Let $E_1\prec \dots \prec E_n$ be the edges of $M$, listed according to the
linear order $\prec$. We set 
$M_i = M \setminus \{ E_1,\dots, E_i \}$ and define
\begin{equation}
\label{eq:weight-map-order}
 \weight_{M,\prec} := \prod_{0\leq i \leq n-1}  \weight_{M_{i}, E_{i+1}} 
 \end{equation}
(we recall that the quantity $\weight_{M,E}$ appearing on the right-hand side 
was defined in \cref{subsec:anatomy}).
This product $\weight_{M,\prec}$ can be interpreted as follows: from the map $M$ we remove (one by
one) all the edges, in the order
specified by the history. For each edge which is about to be removed we consider
its weight relative to the current map; finally we multiply all weights.

Please note that the type of a given edge (i.e.,~\emph{straight} versus \emph{twisted} versus \emph{interface}) 
might change in the process of removing edges and the weight $\weight_{M,\prec}$ 
usually depends on the choice of the history $\prec$.

\subsection{Measure of non-orientability of a map}
\label{subsec:measure-of-nonorientability}

Let $M$ be a map with $n$ edges. We define
\begin{equation}
\label{eq:take-average}
 \weight_M := \frac{1}{n!} \sum_{\prec} \weight_{M,\prec}. 
\end{equation}
This quantity can be interpreted as the mean value of the weight associated to
the map $M$ equipped with a randomly
selected history (with all histories having equal probability).
This is the central quantity for the current paper, we call it \emph{the measure of non-orientability} of the map $M$.

\begin{example}
\label{example}
We consider the map $M$ depicted in \cref{fig:exampleA}.
For calculations involving removal of edges it is more convenient to represent 
this map as a \emph{ribbon graph}, see \cref{fig:exampleB}.
For the history $\{3,6\} \prec \{1,5\} \prec \{2,4\}$ 
the corresponding weight is equal to $\weight_{M,\prec}=1 \cdot \frac{1}{2} \cdot 1$
while for the history $\{1,5\}\prec \{3,6\}\prec \{2,4\}$ the corresponding weight is equal to 
$\weight_{M,\prec}=\gamma \cdot \gamma \cdot 1$. The other histories are analogous to these two cases; finally
\[ \weight_M = \frac{\overbrace{1 \cdot \frac{1}{2} \cdot 1+1 \cdot \frac{1}{2} \cdot 1}^{\text{$2$ summands}}+
\overbrace{\gamma \cdot \gamma \cdot 1+\cdots+\gamma \cdot \gamma \cdot 1}^{\text{$4$ summands}} }{6}.\]
\end{example}

\begin{lemma}
\label{lem:how-twisted}
Let $M$ be a map. Then $\weight_M$ is a polynomial in variable $\gamma$ of degree (at most)
\[ \Moeb(M):=2 (\text{number of connected components of $M$}) - \chi(M), \]
where \[\chi(M):=|\V(M)|-|\E(M)|+|\F(M)|\]
is the Euler characteristic of $M$.

The polynomial $\weight_M(\gamma)$ is an even (respectively, odd) polynomial if and only if
the Euler characteristic $\chi(M)$ is an even number (respectively, an odd number).
\end{lemma}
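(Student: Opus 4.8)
The plan is to prove the statement by induction on $n = |\E(M)|$, using the edge-removal recursion that underlies the definition of $\weight_M$. The base case $n=0$ is trivial: $\weight_M = 1$, a polynomial of degree $0 = \Moeb(M)$ (here $\chi(M) = |V(M)| + |F(M)|$ equals twice the number of connected components since every component is an isolated... actually a map with no edges has components consisting of a single vertex which is simultaneously the unique face, so $\chi(M) = 2\cdot(\#\text{components})$, giving $\Moeb(M)=0$), and it is even, matching the parity of $\chi(M)$. For the inductive step, I would fix an edge $E$ and condition on whether $E$ is removed first in a random history. Writing $n = |\E(M)|$, one gets the recursion
\[
\weight_M = \frac{1}{n} \sum_{E \in \E(M)} \weight_{M,E} \cdot \weight_{M \setminus E},
\]
since a uniformly random history has each edge equally likely to come first, and conditioning on $E_1 = E$ leaves a uniformly random history on $M \setminus E$. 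Each $\weight_{M\setminus E}$ is, by the inductive hypothesis, a polynomial in $\gamma$ of degree at most $\Moeb(M\setminus E)$ with the stated parity.

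**Controlling degree and parity through the three edge types.**

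The heart of the argument is to track how $\Moeb$ and $\chi$ change under each of the three kinds of edge removal, and to match this against the weight $\weight_{M,E} \in \{1, \gamma, \tfrac12\}$. Using the case analysis of Section \ref{subsec:effect-of-edge-removal-on-faces}: for a \emph{straight} edge, $F$ splits into two faces (or one face if degenerate, when $E$ is a bridge with a leaf), so $|F|$ increases by $1$ (or stays the same), $|\E|$ drops by $1$, and $|V|$ either stays the same or drops by $1$ or $2$ if leaves disappear; a careful bookkeeping shows $\Moeb(M \setminus E) \le \Moeb(M)$, so multiplying by $\weight_{M,E}=1$ keeps the degree bound. For a \emph{twisted} edge, the unique face $F$ is replaced by a face of size $2(r-1)$, so $|F|$ is unchanged, $|\E|$ drops by $1$, $|V|$ may drop; here $\chi$ \emph{increases} by $1$ modulo vertex losses, hence $\Moeb(M\setminus E) \le \Moeb(M) - 1$, and multiplying by $\weight_{M,E} = \gamma$ raises the degree by exactly $1$, staying within $\Moeb(M)$. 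For an \emph{interface} edge, two faces merge into one so $|F|$ drops by $1$, $|\E|$ drops by $1$, and the number of connected components can drop by $1$ if $E$ was a bridge joining two components (in which case, by Lemma \ref{lem:BridgeStraight}, $E$ is not interface — so when $E$ is a genuine interface edge the components are unchanged); thus $\Moeb(M\setminus E) \le \Moeb(M) - 1$ or $\Moeb(M\setminus E) \le \Moeb(M) + 1$ in the component-merging case which cannot occur — overall $\Moeb(M\setminus E)\le\Moeb(M)$, and multiplying by the constant $\weight_{M,E}=\tfrac12$ preserves the degree bound. The parity claim follows the same way: in each case $\chi(M\setminus E) - \chi(M)$ has a fixed parity (it changes parity exactly when $E$ is twisted or interface and not when $E$ is straight), which is precisely compensated by the parity of $\weight_{M,E}$ (the monomial $\gamma$ is odd, while $1$ and $\tfrac12$ are even), so $\weight_{M,E}\cdot\weight_{M\setminus E}$ always has the parity dictated by $\chi(M)$, and summing preserves this.

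**Anticipated main obstacle.**

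I expect the bookkeeping around \emph{leaves and degenerate cases} to be the delicate point. When removing a straight edge, if one or both extremities are leaves they vanish, so $|V|$ drops and one of the two split faces may be empty; one must check that $\Moeb$ still does not increase, and that the Euler characteristic parity still behaves correctly. Similarly, for twisted and interface edges one should verify leaves cannot be created or that their disappearance is harmless — indeed Lemma \ref{lem:LeafStraight} guarantees an edge incident to a leaf is straight, so leaf-incident edges never fall in the twisted or interface cases, which simplifies matters. Once these degenerate configurations are correctly handled, the induction closes: $\weight_M$ is a sum (with positive rational coefficients, as a byproduct, though that is not claimed here) of terms each of which is a polynomial of degree $\le \Moeb(M)$ with the parity of $\chi(M)$, hence so is $\weight_M$.
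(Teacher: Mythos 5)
Your proposal is essentially the same argument as the paper's: the heart in both is the per-edge claim that $\weight_{M,E}$ is a polynomial in $\gamma$ of degree at most $\Moeb(M)-\Moeb(M\setminus E)$ with parity matching that difference, verified by the same five-way case analysis (leaf/leaf, one leaf, straight non-leaf, twisted, interface); the paper then telescopes the product $\weight_{M,\prec}=\prod_i\weight_{M_i,E_{i+1}}$ for a fixed history and averages, while you induct on $|\E(M)|$ via the recursion $\weight_M=\tfrac1n\sum_E\weight_{M,E}\,\weight_{M\setminus E}$, which is a cosmetic restructuring of the same telescope. One small slip: you assert that $\chi(M\setminus E)-\chi(M)$ changes parity when $E$ is interface, but in fact an interface removal gives $\Delta|F|=-1$, $\Delta|\E|=-1$, $\Delta|V|=0$, hence $\Delta\chi=0$ (even); the parity change happens \emph{only} for twisted edges, which is exactly what the odd monomial $\gamma$ compensates, and the even weight $\tfrac12$ for interface then matches the even $\Delta\chi$ --- so your conclusion is right even though the intermediate claim is not.
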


Before the proof
notice that if $M$ is a 
\emph{connected} map 
then $\frac{1}{2}\Moeb(M)$ has a natural interpretation as the \emph{Euler 
genus} of the surface on which $M$ is drawn.
Moreover, if its underlying surface is orientable
then the map cannot have any twisted edges and
thus $\weight_M(\gamma)$ does not depend on $\gamma$.
Hence the degree of the polynomial $\weight_M(\gamma)$ satisfies
the same bound as some invariants defined by Brown and Jackson
\cite[Lemma 3.3]{BrownJacksonCandidateMatchingJackConjecture} and 
La Croix \cite[Theorem 4.4]{La2009}.

\begin{proof}[Proof of \cref{lem:how-twisted}]
We claim that for an arbitrary map $M$ and its edge $E$:
\begin{enumerate}[label=\emph{(\Alph*)}]
\item \label{littlelemma-A}
$\weight_{M,E}$ is a polynomial in $\gamma$ of degree (at most) $\Moeb(M)-\Moeb(M\setminus E)$,
\item \label{littlelemma-B}
$\weight_{M,E}$ is an even (respectively, odd) polynomial if and only if 
$\Moeb(M)-\Moeb(M\setminus E)$ is an even number (respectively, an odd number).
\end{enumerate}
Indeed, this statement follows by a careful investigation of 
each of the three cases considered in \cref{subsec:anatomy}
(cases when at least one of the endpoints of $E$
has degree $1$ must be considered separately).
We present the details in the following.
\begin{itemize}
    \item Assume that both extremities of $E$ are leaves.
        Then $E$ is clearly a straight edge, 
        so $\weight_{M,E}=1$.
        But $M \setminus E$ has two vertices less, one edge less, one face less
        and one connected component less than $M$. So $\Moeb(M \setminus E)=\Moeb(M)$
        and the claim holds in this case.
    \item Assume that exactly one extremity of $E$ is a leaf.
        Then $E$ is clearly a straight edge, 
        so $\weight_{M,E}=1$.
        But $M \setminus E$ has one vertex less, one edge less, and the same number of faces
        and connected components as $M$. So $\Moeb(M \setminus E)=\Moeb(M)$
        and claim holds in this case.
    \item Assume that $E$ is straight, but none of its extremities is a leaf.
        Recall that $\weight_{M,E}=1$ in this case.
        But $M \setminus  E$ has one edge less and one more face
        (see \cref{subsec:split}) than $M$.
        The number of vertices is unchanged, while the number of connected
        components can be constant or increase by $1$.
        In the first case, $\Moeb(M \setminus  E)=\Moeb(M)-2$ and in the second
        $\Moeb(M \setminus  E)=\Moeb(M)$.
        In both cases our claim holds.
    \item Assume that $E$ is twisted. In this case $\weight_{M,E}=\gamma$.
        Furthermore, maps $M$ and $M \setminus  E$ have the same number of
        connected components,
        the same number of faces and the same number of vertices (see \cref{subsec:twist}).
        However, $M\setminus E$ has one edge less than $M$.
        Thus $\Moeb(M \setminus  E)=\Moeb(M)-1$, and our claim holds in this case.
    \item Assume that $E$ is an interface edge. In this case $\weight_{M,E}=\frac{1}{2}$.
        Furthermore, maps $M$ and $M \setminus  E$ have the same number of
        connected components and the same number of vertices.
        However, the number of faces decreases by $1$ (see \cref{subsec:join}) as well as 
        the number of edges. 
        Thus $\Moeb(M \setminus  E)=\Moeb(M)$, and our claim holds in this case.
\end{itemize}

We apply the above claims \ref{littlelemma-A} and \ref{littlelemma-B} to each factor on the right-hand side of 
\eqref{eq:weight-map-order}; 
as the sum of the degrees of the factors is a telescopic sum,
it follows that the statement of the lemma holds true if the polynomial $\weight_M$ 
is replaced by $\weight_{M,\prec}$, where $\prec$ is an arbitrary history. 
The latter result finishes the proof by taking the average over $\prec$.
\end{proof}

\begin{remark}
    The above case-by-case analysis can be understood easily from
    the topological point of view, at least when the edge removal does not disconnect the graph.
    Indeed, a removal of a straight (respectively, twisted or interface) edge increases the Euler characteristic
    of the surface by $2$ (respectively, by $1$ or $0$), 
    see \cref{sec:topological-removal-straight-edge,sec:topological-removal-twisted-edge,sec:topological-removal-interface-edge}.
    It also increases the degree in $\gamma$ by $0$ (respectively, $1$ or $0$).
    In all cases, the degree in $\gamma$ increases at most as much as the Euler characteristic
    of the surface decreases, which explains our Lemma.
\end{remark}

\section{Rectangular Young diagrams and Lassalle's recurrence}
\label{sec:rectangular}

The main result of this section is the following one, which constitutes the first part of \cref{thm:ogs-ok-for-rectangle}
(the proof of the remaining part of \cref{thm:ogs-ok-for-rectangle} is postponed until \cref{sec:proof-OGS-rectangle}).

\begin{theorem}
\label{thm:rectangular-ok}
For any rectangular Young diagram 
\[\lambda = p \times q=(\underbrace{q,\dots,q}_{\text{$p$ times}}),\] 
and for an arbitrary partition $\pi$ and parameter $\alpha>0$,
the answer for \cref{conj:precise} is positive, i.e.,
\begin{equation}
\label{eq:equality-rectangular}
 \Ch_\pi^{(\alpha)}(p\times q) = \newCh_\pi^{(\alpha)} (p\times q)
 \end{equation}
holds true for an arbitrary partition $\pi$ and arbitrary positive integers $p$ and $q$.
\end{theorem}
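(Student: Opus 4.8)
The plan is to prove \eqref{eq:equality-rectangular} by exhibiting one and the same recurrence relation in the partition $\pi$ (with $p,q,\alpha$ kept fixed) that is satisfied by both the Jack character $\Ch_\pi^{(\alpha)}(p\times q)$ and the orientability generating series $\newCh_\pi^{(\alpha)}(p\times q)$, together with the common initial value at $\pi=\emptyset$. This recurrence is Lassalle's recurrence for Jack characters on rectangular Young diagrams; as explained in Section \ref{subsec:rectangular}, Conjecture \ref{conj:precise} was reverse-engineered precisely from it, so the weight $\weight_M$ is tailored to make this work. The first step is the \textbf{rectangular specialisation}: for a rectangular diagram $\lambda=p\times q$ and an arbitrary bipartite graph $G$ one has $N_G(p\times q)=p^{|V_\bullet(G)|}\,q^{|V_\circ(G)|}$, since any assignment of a row to each black vertex and of a column to each white vertex extends in a unique way to an embedding (the box at the intersection of a chosen row and a chosen column automatically lies inside $p\times q$). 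Substituting this into \eqref{eq:conjectured-stanley} gives
\[
\newCh_\pi^{(\alpha)}(p\times q)=(-1)^{\ell(\pi)}\sum_M\Bigl(-\tfrac{p}{\sqrt{\alpha}}\Bigr)^{|V_\bullet(M)|}\bigl(q\sqrt{\alpha}\bigr)^{|V_\circ(M)|}\weight_M ,
\]
which is visibly a polynomial in $p$ and $q$; the left-hand side is polynomial in $p,q$ as well, because $\Ch_\pi^{(\alpha)}$ is $\alpha$-shifted symmetric \cite[Proposition~2]{Lassalle2008a}. Hence it suffices to prove the equality for all positive integers $p,q$, and polynomial recurrences are legitimate.

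The second step is to \textbf{recall Lassalle's recurrence} \cite[formula~(6.2)]{Lassalle2008a}, rewritten in our normalisation by means of \eqref{eq:character} and \eqref{eq:lassalle-normalization-character}. After specialisation to $\lambda=p\times q$ it takes the following shape: singling out a part $r$ of $\pi$ and writing $\pi=(r)\cup\mu$, the value $\Ch_{(r)\cup\mu}^{(\alpha)}(p\times q)$ is an explicit linear combination of (i) $\Ch_{(r-1)\cup\mu}^{(\alpha)}(p\times q)$, with a coefficient linear in $p,q$ and affine in $\gamma$ (``decrement'' of the part $r$); (ii) the characters $\Ch_{(i)\cup(j)\cup\mu}^{(\alpha)}(p\times q)$ with $i+j=r-1$, together with products $\Ch_{\nu'}^{(\alpha)}(p\times q)\,\Ch_{\nu''}^{(\alpha)}(p\times q)$ over the corresponding splittings of $(r)\cup\mu$ (``split'' of $r$); and (iii) the characters $\Ch_{(r+t-1)\cup(\mu\setminus(t))}^{(\alpha)}(p\times q)$ over the parts $t$ of $\mu$ (``merge'' of $r$ with another part). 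Together with $\Ch_\emptyset^{(\alpha)}\equiv 1$ this determines all $\Ch_\pi^{(\alpha)}(p\times q)$ uniquely by induction on $|\pi|$.

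The heart of the proof is the third step: \textbf{the same recurrence for $\newCh$}, which I would prove by induction on $k:=|\pi|=|\E(M)|$. The key observation is that conditioning a uniformly random history on its first removed edge turns \eqref{eq:weight-map-order} into the identity $\weight_M=\tfrac{1}{k}\sum_{E\in\E(M)}\weight_{M,E}\,\weight_{M\setminus E}$, and that both $\weight$ and $N$ are multiplicative over disjoint unions of maps. Substituting this into the formula from Step 1 and exchanging the order of summation, the task becomes to reorganise the sum over pairs $(M,E)$ with $E\in\E(M)$: such a pair is the same datum as the smaller map $M':=M\setminus E$ together with the gluing data of the edge-addition operation inverse to the edge-removal of Section \ref{subsec:effect-of-edge-removal-on-faces}. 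Splitting this sum according to whether $E$ is \emph{straight}, \emph{twisted}, or an \emph{interface} edge, and invoking Sections \ref{subsec:split}, \ref{subsec:twist}, \ref{subsec:join} and Lemma \ref{lem:shortening_polygon}, one sees that the three cases modify the face-type precisely by a split, a decrement, and a merge of parts respectively, and that a straight \emph{bridge} produces a disconnected $M'$, hence a product of two $\newCh$'s; recording how $|V_\bullet|$ and $|V_\circ|$ change in each case reproduces exactly the coefficients $-p/\sqrt{\alpha}$, $q\sqrt{\alpha}$ and $\weight_{M,E}\in\{1,\gamma,\tfrac12\}$. Since $\sum_{E\in\E(M)}$, grouped by faces, runs over all parts $r$ of $\pi$ with multiplicity, the three groups of terms match, part by part, the three kinds of terms of Lassalle's recurrence, and one obtains an identity expressing $\newCh_\pi^{(\alpha)}(p\times q)$ (up to an explicit positive constant) as a combination over the parts $r$ of Lassalle's right-hand side with $\Ch$ replaced by $\newCh$. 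By the inductive hypothesis $\newCh_{\pi'}^{(\alpha)}=\Ch_{\pi'}^{(\alpha)}$ for every strictly smaller $\pi'$ occurring there, so each such right-hand side equals $\Ch_\pi^{(\alpha)}(p\times q)$ by Lassalle's recurrence, and the combination collapses to $\newCh_\pi^{(\alpha)}(p\times q)=\Ch_\pi^{(\alpha)}(p\times q)$. The base case $\pi=\emptyset$ is immediate: there is a single empty map, with no vertices, $\weight_M=1$ and $N_M\equiv1$, so $\newCh_\emptyset^{(\alpha)}\equiv1=\Ch_\emptyset^{(\alpha)}$.

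I expect the main obstacle to lie entirely in Step 3: matching the reorganised map sum with Lassalle's recurrence on the nose. One must carefully track (i) the degenerate configurations — leaf edges and bridges, handled via Lemmas \ref{lem:LeafStraight} and \ref{lem:BridgeStraight}, and straight-edge removals that create a face of size $0$; (ii) the combinatorial multiplicities introduced by the edge-addition operation (choices of the corners into which the two new edge-sides are inserted) and by passing between labelled edge-sides and unordered data; and (iii) the signs coming from the changes of $|V_\bullet|$ and $|V_\circ|$, so that all coefficients emerge with the correct sign and the overall constants conspire correctly. In essence this computation is the reverse-engineering of Lassalle's recurrence alluded to in Section \ref{subsec:rectangular}, and it is exactly what forces the otherwise mysterious definition of $\weight_M$; Lemma \ref{lem:how-twisted} serves as a useful consistency check on the $\gamma$-degree and parity of the polynomial produced.
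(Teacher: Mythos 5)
Your overall strategy is the one the paper follows: establish, for the rectangular diagram, a recurrence for $\newCh^{(\alpha)}_\pi(p\times q)$ (the paper's Proposition~\ref{prop:recurrence}) that mirrors Lassalle's recurrence~\eqref{eq:recurrence-lassalle} for $\Ch^{(\alpha)}_\pi(p\times q)$, and then compare the two by induction on $|\pi|$. Steps~1 and~3 (the identity $N_G(p\times q)=p^{|V_\bullet(G)|}q^{|V_\circ(G)|}$ and the decomposition of $\weight_M$ by conditioning on the first removed edge, reorganised by the type straight/twisted/interface of that edge) are exactly the paper's Lemma~\ref{lem:NGRectangular} and the proof of Proposition~\ref{prop:recurrence}.

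There is, however, a genuine gap that the paper spends a lemma on and that your sketch glosses over: \textbf{Lassalle's recurrence~\eqref{eq:recurrence-lassalle} is only stated and used for $\pi$ with $m_1(\pi)=0$}, and moreover the smaller partitions $\pi_{\downarrow(r)}$ that appear on its left-hand side \emph{do} acquire a part equal to $1$ when $r=2$. So the recurrence together with the single initial value $\Ch_\emptyset\equiv 1$ does \emph{not} by itself propagate the equality $\Ch_\pi=\newCh_\pi$ through the induction. One also needs the separate identity $\Ch^{(\alpha)}_{\pi\cup 1^l}(\lambda)=(|\lambda|-|\pi|)_l\,\Ch^{(\alpha)}_\pi(\lambda)$, trivial for $\Ch$ from~\eqref{eq:character}, \emph{and} its counterpart~\eqref{eq:nielubie1B} for $\newCh$, which is nontrivial: the paper proves it in Lemma~\ref{lem:1part} via the explicit bijection $\HH$ between pairs (map with bigon face, history) and pairs (smaller map, history), split according to whether the bigon is itself an edge. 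Your proposal never mentions this, and your claim that the recurrence plus $\Ch_\emptyset\equiv1$ determines everything is false as stated.

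A secondary issue is that your recollection of Lassalle's recurrence is inaccurate. Formula~\eqref{eq:recurrence-lassalle} (and likewise the matching recurrence for $\newCh$ in Proposition~\ref{prop:recurrence}) is \emph{linear} in the characters; there are no products $\Ch_{\nu'}\Ch_{\nu''}$. Both $\Ch$ and $\newCh$ are defined via sums over \emph{possibly disconnected} maps, so when removing a straight bridge disconnects $M$, the resulting $M'$ is simply one more term in the single sum defining $\newCh_{\pi_{\uparrow(i,j)}}$ — no factorisation arises. (Multiplicativity of $\weight$ and $N$ over disjoint unions is true, but it is neither needed nor invoked in the paper's proof.) Relatedly, the ``straight'' case splits into two subcases with very different effect: straight with a leaf extremity decrements a part and produces the $(p/\sqrt{\alpha}-\sqrt{\alpha}q)$ coefficient, whereas straight without a leaf splits a part into two; your description collapses these. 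These inaccuracies would have to be corrected before the matching of terms ``on the nose'' that you describe in Step~3 could go through.
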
 

The main idea of the proof is to find a combinatorial interpretation of the
recurrence \eqref{eq:recurrence-lassalle} found by Lassalle \cite[formula (6.2)]{Lassalle2008a}.

\subsection{Lassalle's recurrence}

Following Lassalle \cite{Lassalle2008a} we denote by $\pi \cup (s)$ the partition $\pi$ with an extra part
$s$ added and by $\pi\setminus (s)$ the partition $\pi$ with one part $s$ removed 
(we will not use this notation when $\pi$ does not contain a part equal to $s$).
We also denote
\begin{align*}
 \pi \cup 1^l &= \pi \cup \underbrace{(1) \cup \cdots \cup (1)}_{\text{$l$ times}},
 \\ 
\pi_{\downarrow (s)} &= \big( \pi \setminus (s)\big) \cup (s-1), \\
 \pi_{\uparrow (rs)} &= \big( \pi \setminus (r+s+1)\big) \cup (r) \cup (s), \\
\pi_{\downarrow (rs)}
&= \Big( \big(\pi \setminus (r) \big) \setminus (s) \Big) \cup (r+s-1). 
\end{align*}

Consider a rectangular Young diagram $\lambda = p \times q$ and a partition $\pi$ such that $m_1(\pi) = 0$ 
(i.e., $\pi$ does not contain any part equal to $1$). Then Lassalle's recurrence relation
\cite[formula (6.2)]{Lassalle2008a}, after adapting to our normalizations takes the form:
\begin{multline}
\label{eq:recurrence-lassalle}
\left(\frac{p}{\sqrt{\alpha}}-\sqrt{\alpha} q\right) \sum_r r\ m_r(\pi)\
\Ch^{(\alpha)}_{\pi\downarrow(r)}(\lambda)\\
+\sum_r r\ m_r(\pi) \sum_{i=1}^{r-2}
\Ch^{(\alpha)}_{\pi\uparrow(i,r-i-1)}(\lambda)
\\
-\gamma \sum_r r(r-1)\ m_r(\pi)\
\Ch^{(\alpha)}_{\pi\downarrow(r)}(\lambda)\\
+ \sum_{r,s} rs\ m_r(\pi) \big( m_s(\pi)- \delta_{r,s} \big)\
\Ch^{(\alpha)}_{\pi\downarrow(rs)}(\lambda)\\
= - |\pi|\ \Ch^{(\alpha)}_\pi(\lambda). 
\end{multline}

The difficulty in this formula comes from the fact
that it was proved only under the assumption that $m_1(\pi)=0$.
In the following we will show how to overcome this issue.

\subsection{Partitions with parts equal to $1$}

Luckily, Jack characters corresponding a to partition $\pi$ with some parts equal to $1$
can be deduced from the case without parts equal to $1$.
Indeed, strictly from the definition of Jack characters \eqref{eq:character},
we have the following identity 
\begin{equation}
\label{eq:nielubie1A}
\Ch^{(\alpha)}_{\pi\cup 1^l}(\lambda) 
= \left(|\lambda|-|\pi|\right)_l\ \Ch^{(\alpha)}_\pi(\lambda).
\end{equation}

The following result shows that an analogous property is fulfilled by the orientability generating series $\newCh^{(\alpha)}_\pi$ as well.
\begin{lemma}
\label{lem:1part}
Let $\pi$ be a partition and $\lambda$ be an arbitrary Young diagram. Then
\begin{equation}
\label{eq:nielubie1B}
\newCh^{(\alpha)}_{\pi \cup 1^l}(\lambda) = (|\lambda| -
|\pi|)_l\
\newCh^{(\alpha)}_\pi(\lambda).
\end{equation}
\end{lemma}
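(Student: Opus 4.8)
The strategy is to analyze directly how adding $l$ extra parts equal to $1$ to the face-type $\pi$ affects the sum defining $\newCh^{(\alpha)}_\pi(\lambda)$. Fix a couple of pairings $(\BC,\WC)$ of type $\pi$; adding a part $1$ to the face-type amounts, combinatorially, to adding a \emph{bigon} face to the collection of polygons $\loops(\BC,\WC)$, i.e.\ enlarging the ground set $S$ by two new edge-sides that form one new $\BC$-pair, one new $\WC$-pair, and constitute a face of size $2$. By iterating, passing from $\pi$ to $\pi\cup 1^l$ corresponds to adding $l$ such bigon faces. The plan is to prove \eqref{eq:nielubie1B} for $l=1$ and then conclude by induction on $l$, noting that the falling factorial satisfies $(|\lambda|-|\pi|)_l = (|\lambda|-|\pi|)\,\big(|\lambda|-(|\pi|+1)\big)_{l-1}$ and that $|\pi\cup 1|=|\pi|+1$.

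So fix the case $l=1$. A map $M'$ with face-type $\pi\cup 1$ is obtained by choosing a pairing $\E'$ on the enlarged ground set $S'=S\sqcup\{a,b\}$, where $\{a,b\}$ is both the new $\BC$-pair and the new $\WC$-pair (hence a bigon face of $\loops(\BC',\WC')$). I would split such maps $M'$ according to the $\E'$-pair containing $a$. \textbf{Case 1:} $\{a,b\}\in\E'$. Then the two new edge-sides form a leaf-edge attached to a new isolated vertex–pair; in fact $M'$ is the disjoint union of the old map $M=(\BC,\WC,\E)$ (any map of face-type $\pi$, since $\E'$ restricted to $S$ is an arbitrary pairing of $S$) together with one extra connected component consisting of a single edge which is a leaf on both sides. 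This extra edge is straight by Lemma \ref{lem:LeafStraight}, it contributes a factor $1$ to $\weight_{M'}$, it contributes one black vertex and one white vertex, so the prefactor gets multiplied by $\left(-\tfrac{1}{\sqrt\alpha}\right)\left(\sqrt\alpha\right)=-1$, and $N_{M'}(\lambda)=N_M(\lambda)\cdot(\text{number of ways to embed the extra edge})$. The extra edge with its two leaf-endpoints can be embedded in $\lambda$ in exactly $|\lambda|$ ways (choose a box; its row and column are then forced). \textbf{Case 2:} $\{a,b\}\notin\E'$, i.e.\ $a$ and $b$ are matched by $\E'$ to distinct old edge-sides $s,t\in S$. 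Here I claim the contribution cancels: removing the bigon should, after summing, produce no net term, which is why only Case~1 survives and why one gets the single factor $|\lambda|-|\pi|$ rather than $|\lambda|$.

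The main obstacle is Case 2: one must show these maps contribute zero to $\newCh^{(\alpha)}_{\pi\cup 1}(\lambda)$ \emph{after summation}, and simultaneously extract the correction that turns the naive $|\lambda|$ from Case~1 into $|\lambda|-|\pi|$. I expect the right bookkeeping is to use the history/removal structure: condition on the moment the history $\prec$ first removes one of the two edges incident to the new bigon. Using the local analysis of Sections \ref{subsec:split}--\ref{subsec:join} (a leaf-edge removal is straight with weight $1$ and deletes its leaf vertices), together with the fact that $N$ counts embeddings and a pendant edge/vertex can be attached in a number of ways that depends linearly on $|\lambda|$ minus the number of already-used boxes, one should see the $l=1$ identity as the statement that a pendant leaf contributes the operator ``multiply by $|\lambda|$ minus (current number of edges)''. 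Since a map of face-type $\pi$ has $|\pi|$ edges (the ground set $S$ has $2|\pi|$ elements), this yields the factor $|\lambda|-|\pi|$. I would make this precise by an explicit weight-preserving bijection between the Case~2 maps and pairs (map of face-type $\pi$, choice of an edge), with signs and embedding counts matching so as to subtract exactly $|\pi|\cdot\newCh^{(\alpha)}_\pi(\lambda)$ from the $|\lambda|\cdot\newCh^{(\alpha)}_\pi(\lambda)$ coming from Case~1. Once $l=1$ is established, the general case follows by the induction indicated above.
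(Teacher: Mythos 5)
Your decomposition and your treatment of Case~1 match the paper's proof exactly: the paper also fixes $(\BC',\WC')$ of type $\pi$, adjoins a bigon $\{b_1,b_2\}$ to get face-type $\pi\cup(1)$, and splits the set of pairs $(M,\prec)$ (map with history) into $F^0$ (your Case~1, $\{b_1,b_2\}\in\E$) and $F^1$ (your Case~2). In Case~1 the pendant edge is straight with weight $1$, adds one vertex of each colour, and multiplies $N_M$ by $|\lambda|$, just as you write.

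However, the mechanism you propose for extracting the $-|\pi|$ correction in Case~2 contains a genuine error. The heuristic that ``a pendant leaf contributes the operator `multiply by $|\lambda|$ minus (number of already-used boxes)' '' is false for this $N_G$: by definition $N_G(\lambda)$ counts \emph{all} embeddings with no injectivity requirement, so attaching a pendant leaf-edge multiplies $N_G$ by exactly $|\lambda|$ regardless of how large the rest of $G$ is. The subtracted $|\pi|$ does not come from $N$ at all; it comes from the \emph{weight}. In Case~2 the first of the two bigon-adjacent edges to be removed under $\prec$ is an \emph{interface} edge (the bigon is a separate face at that moment), so $\weight_{M,\prec}=\tfrac12\,\weight_{M',\prec'}$, while $N_M=N_{M'}$ and the vertex counts are unchanged. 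Moreover, a ``weight-preserving bijection'' with pairs $(\text{map}, \text{edge})$ is not the right structure: $\weight_M$ is an \emph{average} over histories and the factor $\tfrac12$ appears only history-by-history, so one must work with $(M,\prec)$ pairs and compute the fibres of $\HH$. The paper finds that each $(M',\prec')$ has $|\pi|+1$ preimages in $F^0$ (insertions of $\{b_1,b_2\}$ into $\prec$) and $2(|\pi|+1)|\pi|$ preimages in $F^1$ (two ways to split a chosen edge, two choices of which new edge is removed first, and the positions for the earlier one). Combined with the normalisation $\tfrac{1}{(|\pi|+1)!}$ on the $\pi\cup(1)$ side versus $\tfrac{1}{|\pi|!}$, the net factor is $\tfrac{1}{|\pi|+1}\bigl(|\lambda|(|\pi|+1)-\tfrac12\cdot 2(|\pi|+1)|\pi|\bigr)=|\lambda|-|\pi|$. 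Until you carry out this fibre count over histories, your Case~2 remains unproved, and the ``used boxes'' reasoning would lead you down the wrong path.
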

\begin{proof}
It is enough to prove that for any partition $\pi$ the following holds:
\begin{equation}
 \label{eq:1part}
\newCh^{(\alpha)}_{\pi\cup(1)}(\lambda) = \left(|\lambda| -
|\pi|\right)\ \newCh^{(\alpha)}_\pi(\lambda).
\end{equation}

For a partition $\pi$ let $F_\pi$ be the set of pairs $(M', \prec)$,
where $M'$ is a map with the face-type $\pi$ and with a history $\prec$.
More explicitly: we start by fixing two pairings $\BC',\WC'$ of
the same set $S'$ so that $(\BC',\WC')$ has type $\pi$.
Then maps of face-type $\pi$ are triplets $(\BC',\WC',\E')$,
where $\E'$ is a pairing of $S'$.
In other words, each element of $F_\pi$ can be viewed as a pair-partition $\E'$
of $S'$, equipped with some linear order on the pairs.

Consider two elements $b_1$, $b_2$ that are \emph{not} in $S'$
and denote $S:=S' \sqcup \{b_1,b_2\}$.
We also consider the pairings $\BC:=\BC' \sqcup \{\{b_1,b_2\}\}$
and $\WC:=\WC' \sqcup \{\{b_1,b_2\}\}$ of $S$.
The couple $(\BC,\WC)$ has type $\pi \cup (1)$.
Hence maps of face type $\pi \cup (1)$ are triplets $M=(\BC,\WC,\E)$,
where $\E$ is an arbitrary pairing of $S$.

If $(M,\prec) \in F_{\pi\cup(1)}$ is such that 
$\{b_1,b_2\}$ is a pair of $\E$,
we will say that $(M,\prec) \in F^0_{\pi\cup(1)}$;
in the other case we say that $(M,\prec) \in F^1_{\pi\cup(1)}$.
This gives a disjoint decomposition 
\begin{equation}
\label{eq:decomposition}
F_{\pi\cup(1)} = F^0_{\pi\cup(1)} \sqcup F^1_{\pi\cup(1)}.
\end{equation}

\bigskip

Let $\HH\colon F_{\pi\cup(1)} \to F_\pi$ be a function 
$ \HH\colon (M,\prec) \mapsto (M',\prec')$ with $M'=(\BC,\WC,\E)$ defined as follows:
\begin{itemize}
\item
If $(M,\prec) \in F^0_{\pi\cup(1)}$,
then $\E':= \E \setminus \big\{ \{b_1,b_2\} \big\}$ 
is defined as the pairing $\E$
with the pair $\{b_1,b_2\}$ removed;
as the linear order $\prec'$
we take the restriction of $\prec$ to $\E'\subset \E$.

In this case $M$, viewed as a bicolored graph,
is a disjoint union of the bicolored graph $M'$
and the bicolored graph consisting of two vertices
connected by the edge $\{b_1,b_2\}$.
Thus the number of embeddings of $M$ into the Young diagram $\lambda$ 
is simply a product of $N^{(1)}_{M'}(\lambda)$ 
with the number of embeddings of a single edge into the Young diagram $\lambda$ 
(the latter number of embeddings is equal to $|\lambda|$, 
because any box of the Young diagram $\lambda$ can be the image of the edge, 
see \cref{sec:SubsectStanleyGeneral}).
The process of calculating $\weight_{M,\prec}$ is almost identical to the analogous process of calculating  
$\weight_{M',\prec'}$ except for the additional edge $\{b_1,b_2\}$ which is clearly a straight edge. Thus 
\begin{align*} N^{(1)}_M(\lambda) &=|\lambda|\ N^{(1)}_{M'}(\lambda), \\ 
|\V_\circ(M)| &= |\V_\circ(M')| + 1,\\ 
|\V_\bullet(M)| &= |\V_\bullet(M')| + 1, \\
\weight_{M,\prec} &= \weight_{M',\prec'}.\\
\intertext{The first three equations also imply that}
N^{(\alpha)}_M(\lambda) &= |\lambda|\ N^{(\alpha)}_{M'}(\lambda).  
\end{align*}

\item
Let $(M,\prec) \in F^1_{\pi\cup(1)}$.
The edge-sides $b_1$ and $b_2$ appear
in two different edges $\{e_1, b_i\},\{e_2, b_j\}\in \E$;
we choose the indices $\{i,j\}=\{1,2\}$ in such a way that $\{e_1, b_i\}\prec \{e_2, b_j\}$.
Then we set
\[\E' := \E_{\{b_1,b_2\}}= \left( \E \cup \big\{ \{e_1, e_2\} \big\}\right) \setminus 
\big\{  \{e_1, b_i\}, \{e_2, b_j\} \big\}.  \]
As the linear order $\prec'$ we take the unique linear order on $\E'$
that coincides with $\prec$ on the intersection $\E \cap \E'$ of their domains
and such that for any pair $P\in \E \cap \E'$ 
we have that \[P \prec' \{e_1, e_2\} \iff P \prec \{e_2, b_j\};\]
in other words the order $\prec'$ is obtained from $\prec$
by substituting the pair $\{e_2, b_j\}$ by $\{e_1, e_2\}$ and by removing the pair $\{e_1, b_i\}$.

The map $M$ is obtained from $M'$
by replacing the edge $\{e_1,e_2\}$
by a pair of edges in such a way that a new face is created 
(this face corresponds to the bigon $B=\{b_1,b_2\}$ in $\loops(\BC,\WC)$). 
It means that, as a bicolored graph, $M$ is obtained from the graph $M'$ by replacing one of the edges by a pair
of edges, hence $N^{(\alpha)}_M = N^{(\alpha)}_{M'}$.
The process of calculating $\weight_{M,\prec}$ is almost identical
to the analogous process of calculating  $\weight_{M',\prec'}$ except for
the edge $\{e_1,b_i\}$, which is the one of the two edges adjacent to the bigon $B$ which is removed first. 
This edge is clearly an interface edge. Thus
\begin{align*} 
N^{(1)}_M(\lambda) &= N^{(1)}_{M'}(\lambda), \\ 
 |\V_\circ(M)| &= |\V_\circ(M')|,\\ 
 |\V_\bullet(M)| &= |\V_\bullet(M')|, \\
  \weight_{M,\prec} &= \frac{1}{2} \weight_{M',\prec'}. \\
\intertext{The first three equations imply that}
N^{(\alpha)}_M(\lambda) &= N^{(\alpha)}_{M'}(\lambda).  
\end{align*}
\end{itemize}
This concludes the definition of the map $\HH$.

\bigskip

The left-hand side of \eqref{eq:1part} is equal to
\begin{multline}
\label{eq:decomposition-in-action}
\newCh^{(\alpha)}_{\pi\cup(1)}(\lambda) 
= \sum_{(M,\prec) \in F_{\pi\cup(1)}} \frac{(-1)^{\ell(\pi)+1}}{(|\pi|+1)!} 
\left(-1\right)^{|\V_\bullet(M)|}
  \weight_{M,\prec} \ N^{(\alpha)}_M(\lambda) \\
= \sum_{(M,\prec) \in F^0_{\pi\cup(1)}} \frac{(-1)^{\ell(\pi)}}{(|\pi|+1)!} 
\left(-1\right)^{|\V_\bullet(M')|}
  \weight_{M',\prec'} |\lambda| \ N^{(\alpha)}_{M'}(\lambda) \\
-\sum_{(M,\prec) \in F^1_{\pi\cup(1)}} \frac{(-1)^{\ell(\pi)}}{(|\pi|+1)!} 
\left(-1\right)^{|\V_\bullet(M')|}
  \weight_{M',\prec'} \frac{1}{2}\ N^{(\alpha)}_{M'}(\lambda),
\end{multline}
where $(M',\prec')$ is the image of $(M,\prec)$ by $\HH$
and in the last equality we used the decomposition \eqref{eq:decomposition}.

\bigskip

In the following we will show that for each $(M',\prec') \in F_\pi$ its preimage fulfills: 
\begin{align*} 
|\HH^{-1}(M', \prec') \cap F^0_{\pi \cup (1)}| &= |\pi|+1, \\
|\HH^{-1}(M', \prec') \cap F^1_{\pi \cup (1)}| &= 2\ (|\pi| + 1)|\ \pi | .
\end{align*}

First observe that for all pairs $(M,\prec)\in\HH^{-1}(M', \prec') \cap F^0_{\pi \cup (1)}$,
the maps $M$ are all the same: their edge pairing is given
by $\E=\E'\cup \big\{ \{b_1,b_2\} \big\}$.
Moreover, the order $\prec$ is obtained from the order $\prec'$
by adding the additional pair $\{b_1,b_2\}$ anywhere
between pairs of $\E'$ and this can be done in $|\pi|+1$ ways. 

Similarly, consider $(M,\prec)\in\HH^{-1}(M', \prec') \cap F^1_{\pi \cup (1)}$.
Then $\E$ is obtained from $\E'$ by removing some pair $\{e_1, e_2\}$
and adding the pairs $\{e_1, b_i\}$ and $\{e_2, b_j\}$
for some choice of $\{i,j\}=\{1,2\}$. 
Since the edges $e_1$ and $e_2$ play different roles (because by convention
$\{e_1, b_i\} \prec \{e_2, b_j\}$, 
we have altogether $4$ choices for doing this for each edge $\{e_1, e_2\}$.
The pair $\{e_1, e_2\}\in \E'$ can be equivalently specified by saying that there are $\ell$ elements 
which are smaller than $\{e_1, e_2\}$ (with respect to $\prec'$) with $0\leq \ell \leq |\pi|-1$.
The linear order $\prec$ is obtained by substituting the pair $\{e_1,e_2\}$ by $\{e_2,b_j\}$ and by adding the pair $\{e_1,b_i\}$ in such a way that $\{e_1,b_i\}\prec \{e_2,b_j\}$; there are $\ell+1$ choices for this. 
Thus the total number of choices is equal to
\[ \sum_{0\leq \ell \leq |\pi|-1} 4 (\ell+1) = 2\ (|\pi|+1)\ |\pi|,\]
just as we claimed.

\bigskip

Concluding, \eqref{eq:decomposition-in-action} gives us
\begin{multline*}
\newCh^{(\alpha)}_{\pi\cup(1)}(\lambda) \\
\shoveleft{= \sum_{(M', \prec') \in F_\pi} \frac{(-1)^{\ell(\pi)}}{(|\pi|)!\ (|\pi| + 1)} 
\left(-1\right)^{|\V_\bullet(M')|}}\\
\shoveright{\times \weight_{M',\prec'} \ N^{(\alpha)}_{M'}(\lambda) 
        \left( |\lambda|\ (|\pi|+1) - 2\ (|\pi| + 1)\ |\pi|\  \frac{1}{2} \right) 
}\\ 
 = (|\lambda| - |\pi|)
 \times \sum_{(M', \prec') \in F_\pi} \frac{(-1)^{\ell(\pi)}}{(|\pi|)!} 
\left(-1\right)^{|\V_\bullet(M')|}
  \weight_{M',\prec'} \ N^{(\alpha)}_{M'}(\lambda) \\
 = (|\lambda| - |\pi|)\ \newCh^{(\alpha)}_{\pi}(\lambda)
\end{multline*}
which finishes the proof.
\end{proof}

\begin{corollary}
\label{cor:add-parts-1}
If the answer for \cref{conj:precise} is positive for some partition $\pi$ and some Young diagram $\lambda$, 
it is also true for $\pi':=\pi\cup 1$ and $\lambda$. 
\end{corollary}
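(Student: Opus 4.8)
The plan is to combine the two compatibility identities that have just been established: the one for Jack characters, namely \eqref{eq:nielubie1A}, and the one for the orientability generating series, namely Lemma \ref{lem:1part}. Both assert that adjoining a part equal to $1$ to the partition multiplies the corresponding quantity by the same scalar factor $(|\lambda|-|\pi|)_l$, which for $l=1$ is simply $|\lambda|-|\pi|$. Recall also that, by definition of the orientability generating series, Conjecture \ref{conj:precise} holding for a pair $(\sigma,\lambda)$ is literally the equality $\Ch^{(\alpha)}_\sigma(\lambda) = \newCh^{(\alpha)}_\sigma(\lambda)$.

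Concretely, I would argue as follows. By hypothesis, Conjecture \ref{conj:precise} holds for $(\pi,\lambda)$, i.e.\ $\Ch^{(\alpha)}_\pi(\lambda) = \newCh^{(\alpha)}_\pi(\lambda)$. Applying \eqref{eq:nielubie1A} with $l=1$ gives $\Ch^{(\alpha)}_{\pi\cup 1}(\lambda) = (|\lambda|-|\pi|)\,\Ch^{(\alpha)}_\pi(\lambda)$, while applying Eq.~\eqref{eq:1part} (the $l=1$ instance of Lemma \ref{lem:1part}) gives $\newCh^{(\alpha)}_{\pi\cup 1}(\lambda) = (|\lambda|-|\pi|)\,\newCh^{(\alpha)}_\pi(\lambda)$. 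Since the two right-hand sides agree, so do the left-hand sides, i.e.\ $\Ch^{(\alpha)}_{\pi\cup 1}(\lambda) = \newCh^{(\alpha)}_{\pi\cup 1}(\lambda)$, which is exactly the assertion that Conjecture \ref{conj:precise} holds for $\pi' = \pi\cup 1$ and $\lambda$.

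There is essentially no obstacle here: all the substantial work has already been carried out in the proof of Lemma \ref{lem:1part}, and the corollary is a one-line deduction. The only point worth a moment's care is bookkeeping with sizes, since $|\pi'|=|\pi|+1$; but this does not affect the argument, as both scaling identities involve the very same scalar $|\lambda|-|\pi|$. One could equally phrase the statement uniformly for $\pi'=\pi\cup 1^l$ by using the full strength of \eqref{eq:nielubie1A} and Lemma \ref{lem:1part}, which would show that Conjecture \ref{conj:precise} for $(\pi,\lambda)$ implies it for $(\pi\cup 1^l,\lambda)$ for every $l\geq 0$; the stated corollary is the case $l=1$, and iterating it recovers the general case.
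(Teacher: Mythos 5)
Your argument is exactly the paper's proof: both invoke \eqref{eq:nielubie1A} together with Lemma \ref{lem:1part} (equivalently Eq.~\eqref{eq:nielubie1B}) to see that both sides of Conjecture \ref{conj:precise} scale by the same factor $|\lambda|-|\pi|$ when a part $1$ is adjoined. This is correct and complete.
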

\begin{proof}
It is enough to use the recurrence relations \eqref{eq:nielubie1A} and \eqref{eq:nielubie1B}
\end{proof}

\subsection{Recurrence relation for the orientability generating series}
In this section we shall see that the orientability generating series $\newCh^{(\alpha)}_\pi(p \times q)$ 
--- when evaluated on a rectangular Young diagram --- fulfills a
recurrence relation analogous to \eqref{eq:recurrence-lassalle}.

There is an important simplification when we restrict our attention to a rectangular Young
diagram, thanks to the following lemma.
\begin{lemma}
For a rectangular Young diagram $\lambda=p\times q$,
the number of embeddings of a bicolored graph $G$ in $\lambda$
is given by the particularly simple formula
\[N^{(1)}_G(\lambda)=p^{|\V_\bullet(G)|}\ q^{|\V_\circ(G)|}.\]
    \label{lem:NGRectangular}
\end{lemma}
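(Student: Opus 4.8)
The plan is to use the definition of $N_G(\lambda)$ as the number of incidence-preserving embeddings $F$ of $G$ into $\lambda$, and to observe that for a rectangular diagram $\lambda = p\times q$ the incidence constraints become vacuous. Recall that an embedding sends each black vertex to one of the $p$ rows, each white vertex to one of the $q$ columns, and each edge to a box, in such a way that if an edge $E$ is incident to a vertex $V$, then the box $F(E)$ lies in the row or column $F(V)$. For a rectangular diagram $p\times q$, however, \emph{every} row has length exactly $q$ and \emph{every} column has height exactly $p$; consequently the box at the intersection of row $i$ and column $j$ exists for every $1\le i\le p$ and $1\le j\le q$. Hence, once the images of the vertices are fixed, the image of each edge is \emph{forced}: an edge $E$ between a black vertex $V_\bullet$ (sent to row $i$) and a white vertex $V_\circ$ (sent to column $j$) must go to the box $(i,j)$, and this box always exists. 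So there is a bijection between embeddings $F$ of $G$ into $p\times q$ and pairs (a function $V_\bullet(G)\to\{1,\dots,p\}$, a function $V_\circ(G)\to\{1,\dots,q\}$).

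First I would make precise the claim that the edge images are determined: given a bipartite graph, each edge has exactly one black and one white endpoint, so the row (from the black endpoint) and the column (from the white endpoint) are both specified, and in a rectangle the corresponding box is well-defined and automatically satisfies both incidence relations. I would also note the edge case where $G$ has an isolated vertex or a vertex of degree zero — it simply contributes a free factor of $p$ or $q$ with no constraint, which is already accounted for. Counting the pairs then gives $p^{|V_\bullet(G)|}\cdot q^{|V_\circ(G)|}$ directly, since the choices for the black vertices and for the white vertices are independent and unconstrained.

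There is essentially no obstacle here: the only thing to be careful about is the convention that $G$ is bipartite with a fixed bipartition into $V_\bullet(G)$ and $V_\circ(G)$ (which is exactly the setup of Section \ref{SubsectStanleyGeneral}), so that "black vertex $\mapsto$ row" and "white vertex $\mapsto$ column" is the only option and no extra factor of $2$ per component appears. Given that, the proof is a one-line counting argument once the forcing of edge images is observed. I would phrase it as: "For $\lambda = p\times q$, the incidence conditions in the definition of an embedding are automatically satisfied, so an embedding is the same as an arbitrary assignment of a row to each black vertex and a column to each white vertex; there are $p^{|V_\bullet(G)|}q^{|V_\circ(G)|}$ such assignments."
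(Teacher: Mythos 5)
Your argument is correct and complete. The key observation --- that in a rectangular diagram every pair (row $i$, column $j$) with $1\le i\le p$, $1\le j\le q$ corresponds to an existing box, so the incidence constraints never rule anything out and each edge's image is uniquely forced once the vertex images are chosen --- is exactly right, and the resulting bijection with pairs of arbitrary functions $V_\bullet(G)\to[p]$, $V_\circ(G)\to[q]$ gives the count $p^{|V_\bullet(G)|}q^{|V_\circ(G)|}$ immediately.

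The paper itself does not give this argument: it simply states that the lemma is a particular case of \cite[Lemma 3.9]{FeraySniady2011} and stops there. So your route is genuinely different in presentation, though not in substance --- you supply the short self-contained counting argument that the cited lemma would specialize to, whereas the paper defers to the earlier reference. What you gain is a proof that can be read and checked in place without consulting \cite{FeraySniady2011}; what the paper gains is brevity and consistency with its general multirectangular formula (the cited Lemma 3.9 handles the multirectangular case $P\times Q$, of which $p\times q$ is the one-rectangle specialization --- indeed the multirectangular version is invoked again in the proof of Theorem \ref{theo:conseq1}). Your remark about isolated vertices contributing a free factor is a nice sanity check, and your care about the fixed bipartition (no extra factor of $2$) is the right thing to flag.
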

\begin{proof}
    It is a particular case of \cite[Lemma 3.9]{FeraySniady2011}.
\end{proof}
We can now prove the following.
\begin{proposition}
\label{prop:recurrence}
If $\lambda=p\times q$ is a rectangular Young diagram and
$\pi$ is a partition such that $m_1(\pi)=0$ then
\begin{multline}
  \label{eq:recurrence-lassalle-prim}
  \underbrace{\left(\frac{p}{\sqrt{\alpha}}-\sqrt{\alpha} q\right) \sum_{r \ge 1} r\ m_r(\pi)\
\newCh^{(\alpha)}_{\pi\downarrow(r)}(\lambda)}_\text{
(removing a leaf)}
 \\
 + \underbrace{\sum_{r \ge 1} r\ m_r(\pi) \sum_{i=1}^{r-2}
\newCh^{(\alpha)}_{\pi\uparrow(i,r-i-1)}(\lambda)}_\text{ (removing a
straight edge)}\\
- \underbrace{\gamma \sum_{r \ge 1} r(r-1)\ m_r(\pi)\
\newCh^{(\alpha)}_{\pi\downarrow(r)}(\lambda)}_\text{
(removing a twisted edge)} \\
+ \underbrace{\sum_{r,s \ge 1} rs\ m_r(\pi) \big( m_s(\pi)- \delta_{r,s} \big)\
\newCh^{(\alpha)}_{\pi\downarrow(rs)}(\lambda)}_\text{ (removing an interface edge)}\\
= - |\pi|\ \newCh^{(\alpha)}_\pi(\lambda). 
\end{multline}
 \end{proposition}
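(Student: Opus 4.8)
The strategy is to mimic Lassalle's recurrence by a combinatorial bijective argument on the level of maps-with-history. By Lemma \ref{lem:NGRectangular}, on a rectangular diagram $\lambda=p\times q$ the quantity $N_M(\lambda)$ depends only on $|V_\bullet(M)|$ and $|V_\circ(M)|$, so the summand attached to $(M,\prec)$ in the definition of $\newCh^{(\alpha)}_\pi(p\times q)$ becomes a pure monomial in $p/\sqrt{\alpha}$ and $\sqrt{\alpha}q$ times $\weight_{M,\prec}$. The plan is to fix a couple of pairings $(\BC,\WC)$ of type $\pi$ and sum over all pairs $(\E,\prec)$; then classify these pairs according to the \emph{first} edge $E_1$ removed by the history $\prec$. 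The four terms on the left-hand side of \eqref{eq:recurrence-lassalle-prim} will correspond, respectively, to the cases where $E_1$ has a leaf as an extremity (weight $1$, but it decreases $|V_\bullet|$ or $|V_\circ|$ by one, producing the factor $\tfrac{p}{\sqrt\alpha}-\sqrt\alpha q$ after summing over the two leaf colours and over which part of $\pi$ is shortened — this uses Lemma \ref{lem:shortening_polygon}), where $E_1$ is a straight non-bridge edge (weight $1$, splitting a face of size $r$ into sizes $i$ and $r-1-i$, cf.\ Section \ref{subsec:split}, giving the $\pi_{\uparrow(i,r-i-1)}$ sum), where $E_1$ is a twisted edge (weight $\gamma$, shortening a face by one, cf.\ Section \ref{subsec:twist}, giving the $-\gamma\,\pi_{\downarrow(r)}$ sum), and where $E_1$ is an interface edge (weight $\tfrac12$, merging two faces of sizes $r$ and $s$ into one of size $r+s-1$, cf.\ Section \ref{subsec:join}, giving the $\pi_{\downarrow(rs)}$ sum — the $\tfrac12$ cancels against the factor $2$ coming from the ordered choice of the two edge-sides).

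More precisely, I would argue as follows. Write $\newCh^{(\alpha)}_\pi(\lambda)=\tfrac{(-1)^{\ell(\pi)}}{|\pi|!}\sum_{(M,\prec)}w(M,\prec)$, where $w(M,\prec)$ collects the sign, the monomial $(\tfrac{1}{\sqrt\alpha})^{|V_\bullet|}(\sqrt\alpha)^{|V_\circ|}$, the factor $\weight_{M,\prec}$ and $N_M(\lambda)$. Splitting off the first edge $E_1$ of the history and writing $\prec'$ for the induced history on $M\setminus E_1$, one has $\weight_{M,\prec}=\weight_{M,E_1}\cdot\weight_{M\setminus E_1,\prec'}$ by \eqref{eq:weight-map-order}, and $(M\setminus E_1,\prec')$ ranges (with the correct multiplicity) over all maps-with-history whose face-type is the partition obtained from $\pi$ by the appropriate elementary operation. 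The bookkeeping is: for a fixed target $(M',\prec')$ with face-type $\pi'$, count in how many ways it arises as $(M\setminus E_1,\prec')$, and keep track of how $|V_\bullet|$, $|V_\circ|$, the number of edges (hence the $|\pi|!$ versus $|\pi'|!$ normalisation), and $N$ change. The leaf case is exactly the content of the computations already carried out inside the proof of Lemma \ref{lem:1part} (the ``$F^0$'' sub-case there), and the interface case parallels the ``$F^1$'' sub-case; so a good portion of the needed combinatorial lemmas is already available and can be quoted or lightly adapted.

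The technical heart is the precise matching of combinatorial multiplicities with the integer coefficients $r\,m_r(\pi)$, $\sum_{i=1}^{r-2}$, $r(r-1)m_r(\pi)$ and $rs\,m_r(\pi)(m_s(\pi)-\delta_{r,s})$ appearing in \eqref{eq:recurrence-lassalle-prim}. These factors count, respectively: the ways to choose an oriented edge-side inside a face of size $r$ together with the choice of that face among the $m_r(\pi)$ faces of size $r$ (for leaf and twisted cases); the ways to choose the splitting position $i$ inside a face of size $r$ (straight non-bridge case); and the ways to choose an ordered pair of faces of sizes $r$ and $s$ (interface case). I expect the main obstacle to be a careful and honest treatment of \emph{degenerate} configurations — faces of size $2$ (bigons), edges both of whose extremities are leaves, and edges that become bridges — where several of the generic pictures in Section \ref{subsec:effect-of-edge-removal-on-faces} collapse; these must be checked to contribute consistently (or to cancel), exactly as the $i=0$ or $j=0$ degeneracies were handled in Section \ref{subsec:split}. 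Once all first-edge cases are assembled, the identity $\sum$(four left-hand terms)$=-|\pi|\,\newCh^{(\alpha)}_\pi(\lambda)$ should drop out, the factor $|\pi|$ arising because every edge of a map of face-type $\pi$ can play the role of $E_1$ and there are $|\pi|$ edges; equivalently, it reflects the passage from $\tfrac{1}{|\pi|!}$ to $\tfrac{1}{(|\pi|-1)!}$.
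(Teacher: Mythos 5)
Your proposal takes essentially the same approach as the paper's proof: both expand $\newCh^{(\alpha)}_\pi(p\times q)$ as a sum over pairs $(M,\prec)$, split according to the type (leaf extremity, straight non-bridge, twisted, interface) of the first edge in the history, use $\weight_{M,\prec}=\weight_{M,E_1}\cdot\weight_{M\setminus E_1,\prec'}$ together with Lemma \ref{lem:NGRectangular} and the face-size bookkeeping of Section \ref{subsec:effect-of-edge-removal-on-faces}, and match the resulting multiplicities against the coefficients $r\,m_r(\pi)$, etc.; the factor $|\pi|$ on the right-hand side arises exactly as you say, from the passage $\tfrac{1}{|\pi|!}\to\tfrac{1}{(|\pi|-1)!}$. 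You even correctly flag the one degeneracy the paper disposes of explicitly (an edge with both extremities leaves would create a bigon face, impossible since $m_1(\pi)=0$), so the plan is sound and faithful to the paper's argument.
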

The comments under the curly braces concerning the individual summands on the left-hand side of 
this recurrence relation are connected to its proof; see below.

\begin{remark}
Notice that the assumption $m_1(\pi) = 0$ in the proposition above means that all the sums on the left hand side of \eqref{eq:recurrence-lassalle-prim} can, alternatively, run over $r \geq 2$ instead of $r \geq 1$.
\end{remark}

\begin{proof}
Clearly, \eqref{eq:weight-map-order} is equivalent to a recursive
relationship
\[ \weight_{M,\prec} = \weight_{M,E}\cdot \weight_{M\setminus E, \prec'}, \]
where $E$ is the first edge according to the linear order $\prec$ and $\prec'$ is a restriction of $\prec$ to the edges of $\E(M\setminus E)$.

Using \eqref{eq:conjectured-stanley}, \eqref{eq:ng-alpha}, 
\eqref{eq:take-average} and \cref{lem:NGRectangular}, 
the right-hand side of \eqref{eq:recurrence-lassalle-prim} can be written as
\begin{multline}
\label{eq:trzej-krolowie}
-|\pi|\ \newCh^{(\alpha)}_\pi(\lambda) \\
= \frac{(-1)^{\ell(\pi)-1}}{ (|\pi|-1)!} 
\sum_{(M,\prec)}
\left(- \frac{p}{\sqrt{\alpha}}\right)^{|\V_\bullet(M)|}
 \left( q\ {\sqrt{\alpha}}\right)^{|\V_\circ(M)|}\ \weight_{M,\prec},
\end{multline}
where the sum runs over maps of face type $\pi$ with a specific history.
In the following we will use the notation
\[\contribution_{M,\prec}(\lambda) := 
\left(- \frac{p}{\sqrt{\alpha}}\right)^{|\V_\bullet(M)|}
 \left(q\  {\sqrt{\alpha}}\right)^{|\V_\circ(M)|}\ \weight_{M,\prec} \]
for the contribution of the pair $(M,\prec)$ 
to the sum on the right-hand side of \eqref{eq:trzej-krolowie}.

Recall that the summation in \eqref{eq:trzej-krolowie} should be interpreted
as follows: we fix a couple $(\BC,\WC)$ of pairings of type $\pi$
and we sum over all choices of the pairing $\E$ of the same ground set $S$;
we also sum over all choices of the linear order $\prec$ on $\E$.
For such a map $M=(\BC,\WC,\E)$ and a linear order $\prec$,
we denote by $E=\{s_1,s_2\}$ the first edge
according to the linear order $\prec$
and by $\prec'$ the restriction of the
linear order $\prec$ to the edges of $\E \setminus \{E\}$.

The summation over $(M,\prec)$ can be seen alternatively as follows:
we first choose the first edge $E$ and then sum over all choices of the couple $(\E',\prec')$
where $\E'=\E \setminus \{E\}$ is a pairing of $S \setminus E$
and $\prec'$ a linear order on $\E'$.
This summation over $\E'$ can be interpreted as a summation over all choices of the map
$M'=(\BC_E,\WC_E,\E')$ of face-type corresponding to the 
type of the couple $(\BC_E,\WC_E)$.
Note that the map $M'$ corresponds to $M \setminus E$.
We shall use this idea repetitively in the proof.

We will split our sum depending on the type (straight, twisted or interface)
of the edge $E$ in the map $M$. 
Note that, as $\BC$ and $\WC$ are fixed, this type depends only on
the pair $E$, not on the remaining pairs in $\E$.
According to the classification from \cref{subsec:anatomy} there are the following possibilities:
\bigskip

\emph{The edge $E$ is straight and both endpoints of $E$ have degree $1$.}
This is not possible since it would imply that one of the faces of $M$ is a bigon thus $m_1(\pi)\geq 1$.
\bigskip

\emph{The edge $E$ is straight (hence $\weight_{M,E}=1$) and
 only the black (respectively, white) endpoint of $E$ has degree $1$
 (i.e., it is a leaf).}
    In other terms, the pair $E$ belongs also to the pairing $\BC$
    (respectively, $\WC$).
    We consider the map $M\setminus E$;
recall that it has one black (respectively,
white) vertex less than $M$ (the leaf extremity has been removed together with the edge).
It follows that 
\begin{align}	      \contribution_{M,\prec}(\lambda) &= \frac{-p}{\sqrt{\alpha}} 
\contribution_{M\setminus E, \prec'}(\lambda);
\label{eq:remove_black_leaf}\\
\intertext{respectively,} 
\notag
      \contribution_{M,\prec}(\lambda) &=  q\ \sqrt{\alpha}\  
\contribution_{M\setminus E, \prec'}(\lambda).
\end{align}

Fix the black vertex $B=\{b_1,b_2\} \in \BC$ and let us consider the total contribution
of the couples $(\E,\prec)$ such that $B$ is the black endpoint of $E$; in other words $E=B$.
This means that $\E = \E' \sqcup \{B\}$,
where $\E'$ is a pairing of $S \setminus B$, and thus $\BC_B=\BC\setminus \{B\}$. Therefore
$(\BC_E, \WC_E)=(\BC_B, \WC_B)$ is a couple of pairings of $S \setminus B$
of type $\pi_{\downarrow (r)}$, where $2r$ is the number of edge-sides
in the polygon of $\loops(\BC,\WC)$ containing $B$.
It follows that the summation over all choices of the pairing $\E'$ corresponds to 
a summation over all choices of the map $M'$ of face-type $\pi_{\downarrow (r)}$.
By definition, the map $M'=(\BC_B, \WC_B,\E')$ is equal to $M \setminus E$
and its contribution appears as the last factor on the right-hand side of Equation \eqref{eq:remove_black_leaf}
above.

Therefore, for a fixed pair $B \in \BC$ that belongs
to a polygon of size $2r$ of $\loops(\BC,\WC)$, the total contribution
to the right-hand side of \eqref{eq:trzej-krolowie}
of the couples $(\E,\prec)$ as above is given by
\[ \frac{(-1)^{\ell(\pi)-1}}{(|\pi|-1)!} \sum_{(M',\prec')} 
\frac{-p}{\sqrt{\alpha}}  \contribution_{M',\prec'}(\lambda)
= \frac{p}{\sqrt{\alpha}}\ \newCh_{\pi_{\downarrow (r)}}(\lambda). \]
For each $r\geq 2$ there are $m_r(\pi)$ polygons of size $2r$ in $\loops(\BC,\WC)$, and each of them
contains $r$ pairs of $\BC$, 
hence the total contribution of the pairs $(M,\prec)$ such that
the first edge $E$ belongs to $\BC$ (i.e., its black extremity is a leaf) is equal to 
\[ \frac{p}{\sqrt{\alpha}}\ \sum_{r \ge 2} r m_r(\pi)
\ \newCh_{\pi_{\downarrow (r)}}(\lambda).\]

Symmetrically, the total contribution of pairs $(M,\prec)$ such that
the first edge $E$ belongs to $\WC$ is equal to
\[-q\ \sqrt{\alpha}\ \sum_{r \ge 2} r m_r(\pi)
\ \newCh_{\pi_{\downarrow (r)}}(\lambda).\]

Finally, both cases together yield the first term of
the induction relation \eqref{eq:recurrence-lassalle-prim}.
\bigskip

\emph{The edge $E$ is straight (hence $\weight_{M,E}=1$) and
no endpoint of $E$ has degree $1$.} Then
\[ \contribution_{M,\prec}(\lambda) = 
\contribution_{M\setminus E, \prec'}(\lambda). \]
By definition, $E=\{s_1,s_2\}$ being straight means that both of its edge-sides $s_1$ and $s_2$
belong to the same polygon $F \in \loops(\BC,\WC)$.
Besides, there is an even number of edge-sides, let say $2i$ (with $i>0$),
between $s_1$ and $s_2$ if we turn around $F$ in one direction
and also an even number of edge-sides, let say $2j$ (with $j>0$),
if we turn around the face in the other direction.

Fix such a pair $E$ of edge-sides.
Then $(\BC_E,\WC_E)$ is a couple of pairings of $S \setminus E$ of type
$\pi_{\uparrow(i,j)}$ (see \cref{subsec:split}).
As before, the summation over all choices of $(\E,\prec)$
such that $E$ is the first edge is equivalent to a summation over
all choices of the pairing $\E'$ of $S \setminus E$ and all choices of the order $\prec'$ on $\E'$.
By definition, this corresponds to a summation over all choices of $(M',\prec')$,
where $M'=(\BC_E,\WC_E,\E')=M \setminus E$ runs over maps
of face-type $\pi_{\uparrow(i,j)}$.
Therefore, for a fixed $E$, the total contribution of the corresponding pairs
$(M,\prec)$ is equal to 
\[
\sum_{(M',\prec')}
\frac{(-1)^{\ell(\pi)-1}}{ (|\pi|-1)!}
\contribution_{M',\prec'}(\lambda)\\
= \newCh^{(\alpha)}_{\pi_{ \uparrow(i,j)}}(\lambda).\]

Let us count how many pairs $E$ correspond to a given value
of $i$ and $j$.
First, $s_1$ must be chosen in some face $F$ containing $2r$ edge-sides.
There are $m_r(\pi)$ such faces and $2r$ edge-sides in each of them,
so there are $2 r m_r(\pi)$ possible choices for $s_1$.
Once $s_1$ is fixed, there are two possible choices for $s_2$ 
(and only one choice if $i=j$):
we fix arbitrarily a direction to turn around the face $F$
and then $s_2$ must be
the $i+1$-th or $j+1$-th edge-side after $s_1$ in this direction.
As $s_1$ and $s_2$ play identical role and $E$ is a non-ordered pair,
the number of pairs $E$ corresponding to a pair of values $\{i,j\}$
is equal to $(2 - \delta_{i,j}) r m_r(\pi)$.
Hence the total contribution of the couples $(M,\prec)$ such that $E$
is straight and none of its endpoints is a leaf is equal to
\[\sum_{\substack{r \geq 1 \\ \{i,j\} : \\ i,j\geq 1, \\i+j=r-1}} (2 - \delta_{i,j}) r m_r(\pi)
\newCh^{(\alpha)}_{\pi_{\uparrow(i,j)}}(\lambda)
= \sum_{\substack{r \geq 1 \\ i,j\geq 1 \\ i+j =r-1}} r m_r(\pi)
\newCh^{(\alpha)}_{\pi_{\uparrow(i,j)}}(\lambda).\]
Clearly, it is equal to the second summand on the left-hand side of \eqref{eq:recurrence-lassalle-prim}.
\bigskip

\emph{The edge $E$ is twisted and thus $\weight_{M,E}=\gamma$.}
Then, no endpoint of
$E$ has degree $1$, hence
\[ \contribution_{M,\prec}(\lambda) = \gamma
\contribution_{M\setminus E, \prec'}(\lambda).\]
We fix
a pair $E=\{s_1,s_2\}$
such that both edge sides $s_1$ and $s_2$ lie in a polygon $F$
of $\loops(\BC,\WC)$ and are in an odd position.
As above, if we fix the number $2r$ of  the edge-sides in $F$,
there are $2r m_r(\pi)$ possible choices for $s_1$.
Once $s_1$ is fixed, there are $r-1$ possible choices for $s_2$,
which makes $r(r-1)m_r(\pi)$ choices for the pair $\{s_1,s_2\}$
(beware of the symmetry between $s_1$ and $s_2$).

Fix such an edge $E$.
The couple $(\BC_E,\WC_E)$ of pairings of $S \setminus E$ has type
$\pi\downarrow (r)$ (see \cref{subsec:twist}).
Hence the summation over all choices of $(M,\prec)$
such that $E$ is the first edge
is equivalent to
a summation over maps $M \setminus E$ of face-type $\pi\downarrow (r)$.

Finally, the total contribution of the couples $(M,\prec)$
with the first edge twisted is equal to
\begin{multline*} \sum_r r(r-1)\ m_r(\pi) \sum_{(M',\prec')}
\frac{(-1)^{\ell(\pi)-1}}{ (|\pi|-1)!}  \gamma \contribution_{M',\prec'}(\lambda) \\
= - \gamma \sum_r r(r-1)\ m_r(\pi)\ \newCh_{\pi\downarrow (r)}(\lambda), 
\end{multline*}
where the summation on the left-hand side is over maps $M'$ with face-type $\pi\downarrow(r)$. 
Clearly, it is equal to the third summand on the left-hand side of \eqref{eq:recurrence-lassalle-prim}.
\bigskip

\emph{The edge $E$ is interface and thus $\weight_{M,E}=\frac{1}{2}$.}
Then, no
endpoint of
$E$ has degree $1$, hence
\[
\contribution_{M,\prec}(\lambda) = 
\frac{1}{2} \contribution_{M\setminus
E, \prec'}(\lambda).
\]
Fix a pair $E=\{s_1,s_2\}$ of edge-sides lying in two different polygons $F_1$
and $F_2$ of $\loops(\BC,\WC)$.
Suppose $F_1$ contains $2r$ edge-sides, while $F_2$ has $2s$.
Then $(\BC_E,\WC_E)$ has face-type $\pi\downarrow(rs)$ 
(see \cref{subsec:join}).
The summation over all choices of $(M,\prec)$
such that $E$ is the first edge
is equivalent to 
a summation over all choices of the map $M'=M \setminus E$ of face-type $\pi\downarrow (rs)$.
Therefore, for a fixed pair $E$ as above, the total
contribution of the couples $(M,\prec)$ with the first edge equal to $E$ is given by
\[\sum_{(M',\prec')}     
\frac{(-1)^{\ell(\pi)-1}}{ (|\pi|-1)!} \cdot 
\frac{1}{2} \contribution_{M',\prec'}(\lambda) =
\frac{1}{2} \newCh_{\pi\downarrow (rs)}(\lambda).\]

How many pairs $E$ correspond to a given pair $\{r,s\}$?
First, one should choose $s_1$ in a polygon of size $2r$ or $2s$, let us say
$2r$, of $\loops(S_1,S_2)$.
There are $2r m_r(\pi)$ choices for that.
Then we choose $s_2$ in a polygon of size $2s$ of $\loops(S_1,S_2)$
(beware that if $r=s$, this polygon has to be different from the first one):
there are $2s \big(m_s(\pi) -\delta_{r,s}\big)$ choices for that.
If $r=s$ then $s_1$ and $s_2$ play analogous roles 
(if $r \neq s$, we broke the symmetry
by assuming that $s_1$ lies in a polygon of size $2r$),
so one should divide by $2$ in order to count unordered pairs $\{s_1,s_2\}$ instead of ordered pairs.
Finally, we get that the total contribution of the couples $(M,\prec)$
with the first edge being interface is equal to
\begin{multline*}
    \sum_{\{r,s\} }\frac{4}{1+\delta_{r,s}}
rs\ m_r(\pi) \big( m_s(\pi)- \delta_{r,s} \big)\
\frac{1}{2} \newCh_{\pi\downarrow (rs)}(\lambda)\\
=\sum_{r,s }       
rs\ m_r(\pi) \big( m_s(\pi)- \delta_{r,s} \big)\
\newCh_{\pi\downarrow (rs)}(\lambda).
\end{multline*}

Clearly, it is equal to the fourth summand on the left-hand side of \eqref{eq:recurrence-lassalle-prim}.
\bigskip

By bringing together all the contributions of the cases considered above we finish the proof.
\end{proof}

\subsection{Proof of \cref{thm:rectangular-ok}}

\begin{proof}[Proof of \cref{thm:rectangular-ok}]
We will use induction over $|\pi|$. 
For $|\pi|=0$ there is only the empty partition $\pi=\emptyset$; 
clearly in this case $\Ch^{(\alpha)}_{\emptyset}(\lambda) = \newCh^{(\alpha)}_{\emptyset}(\lambda) =1$ holds true. 
Since this is a bit pathological case (empty polygon, empty function, etc.), 
in order to avoid difficulties with the start of the induction, 
we also consider separately the case $|\pi|=1$ for which there is only one partition
$\pi=(1)$; we easily get that
that $\Ch^{(\alpha)}_{1}(\lambda) = \newCh^{(\alpha)}_{1}(\lambda) =
|\lambda|$ indeed holds true.

Let us assume that the inductive assertion holds for all $\pi$ such that $|\pi| < n$ and 
let $\pi$ be a partition with $|\pi|=n$.
In the case when $m_1(\pi)\geq 1$
we apply \eqref{eq:nielubie1A} and \eqref{eq:nielubie1B}
and the inductive assertion implies that \eqref{eq:equality-rectangular} holds true for $\pi$ as well.

In the case when $m_1(\pi)=0$, we compare the left-hand side of \eqref{eq:recurrence-lassalle-prim} 
with the left-hand side of \eqref{eq:recurrence-lassalle}. 
From the inductive assertion it follows that they are equal; so must be their right-hand sides.
This concludes the proof of the inductive step.
\end{proof}

\subsection{Proof of \cref{thm:ogs-ok-for-rectangle}}
\label{sec:proof-OGS-rectangle}

\newcommand{\isotropic}[1]{{#1'}}

\begin{proof}[Proof of \cref{thm:ogs-ok-for-rectangle}] 
The first part of \cref{thm:ogs-ok-for-rectangle} is restated in \cref{thm:rectangular-ok}, 
hence it is enough to prove only the second part.

In the following we shall implicitly view $\alpha$ as a function of $\gamma$, where $\alpha(\gamma)$ 
was defined in the beginning of the proof of \cref{lem:stanley-polynomials-exist}.

Let us fix the values of an integer $\ell\geq 1$ and $1 \leq i \leq \ell$.
We define
\begin{multline*}
    \widetilde{PQ}_i := \Bigg\{\big(\widetilde{P}_i = (p_1,\dots,p_\ell),\ Q = (q_1,\dots,q_\ell),\ \gamma \big) : \\
\text{$p_j = 0$ for all $j \neq i$ and} \\
\sqrt{\alpha} \widetilde{P}_i \times \frac{1}{\sqrt{\alpha}} Q 
\text{ is a Young diagram}\Bigg\}. 
\end{multline*}
Note that all Young diagrams appearing in this set are rectangular.
Thus the first part of \cref{thm:ogs-ok-for-rectangle} implies that 
for any triple $(\widetilde{P}_i,Q,\gamma) \in \widetilde{PQ}_i$ the equality
\begin{equation}
\label{eq:InProof}
\Ch^{(\alpha)}_\pi \left(\sqrt{\alpha} \widetilde{P}_i \times \frac{1}{\sqrt{\alpha}} Q\right) = 
\newCh^{(\alpha)}_\pi \left(\sqrt{\alpha} \widetilde{P}_i \times \frac{1}{\sqrt{\alpha}} Q\right)
\end{equation}
holds true for any partition $\pi$. 

Just like in \cref{def:ogs-property}, we shall view now $P=(p_1,\dots,p_\ell)$ and $Q=(q_1,\dots,q_\ell)$ 
as sequences of indeterminates. Then, by \cref{lem:stanley-polynomials-exist} we know that each of the quantities
\[ \Ch^{(\alpha)}_\pi \left( \sqrt{\alpha}P\times \frac{1}{\sqrt{\alpha}}Q \right) \text{ and } 
\newCh^{(\alpha)}_\pi \left( \sqrt{\alpha}P\times \frac{1}{\sqrt{\alpha}}Q \right)\]
is a polynomial in the indeterminates $\gamma, p_1,\dots,p_\ell,q_1,\dots,q_\ell$. 
By substituting
\[\widetilde{P}_i := (\underbrace{0,\dots,0}_{\text{$i-1$ times}},p_i,\underbrace{0,\dots,0}_{\text{$\ell-i$ times}})\] 
we have that each of the quantities:
\begin{equation}
\label{eq:polynomial-widetilde-A}
\Ch^{(\alpha)}_\pi \left( \sqrt{\alpha}\widetilde{P}_i\times \frac{1}{\sqrt{\alpha}}Q \right)   
\end{equation}
and
\begin{equation}
\label{eq:polynomial-widetilde-B}
\newCh^{(\alpha)}_\pi \left( \sqrt{\alpha}\widetilde{P}_i\times \frac{1}{\sqrt{\alpha}}Q \right)   
\end{equation}
can be expressed as a polynomial in the indeterminates $\gamma, p_i,q_1,\dots,q_\ell$ \linebreak
(\emph{a~priori} the uniqueness of this polynomial might be not obvious; we shall discuss this issue in the following).

Moreover, for any triple $(g,e,f)$ of non-negative integers, 
the following equality between the coefficients of the respective polynomials holds true:
\begin{align*}
\left[\gamma^g p_i^e q_i^f\right] \Ch^{(\alpha)}_\pi \left( \sqrt{\alpha}P\times \frac{1}{\sqrt{\alpha}}Q \right) 
&=
\left[\gamma^g p_i^e q_i^f\right]
\Ch^{(\alpha)}_\pi\left( \sqrt{\alpha}\widetilde{P}_i\times \frac{1}{\sqrt{\alpha}}Q \right) , \\
\intertext{and similarly}
\left[\gamma^g p_i^e q_i^f\right] \newCh^{(\alpha)}_\pi\left(\sqrt{\alpha}P\times \frac{1}{\sqrt{\alpha}}Q\right) 
&=
\left[\gamma^g p_i^e q_i^f\right]
\newCh^{(\alpha)}_\pi\left( \sqrt{\alpha}\widetilde{P}_i\times \frac{1}{\sqrt{\alpha}}Q \right) . 
\end{align*}

Since equality \eqref{eq:InProof} holds for any triple
$(\widetilde{P}_i,Q,\gamma) \in \widetilde{PQ}_i$, 
it follows --- using the same technique as in the uniqueness part of the proof of \cref{lem:stanley-polynomials-exist} 
--- that the quantities
\[ \Ch^{(\alpha)}_\pi \left( \sqrt{\alpha}\widetilde{P}_i\times \frac{1}{\sqrt{\alpha}}Q \right) \text{ and }
 \newCh^{(\alpha)}_\pi \left( \sqrt{\alpha}\widetilde{P}_i\times \frac{1}{\sqrt{\alpha}}Q \right)\] 
are equal as polynomials in the indeterminates $\gamma, p_i,q_1,\dots,q_\ell$
(note that the same argument explains the uniqueness of the polynomials \eqref{eq:polynomial-widetilde-A}
and \eqref{eq:polynomial-widetilde-B}). 
In particular, for any triple $(g,e,f)$ of non-negative integers, 
the following equality between the coefficients of the respective polynomials in the indeterminates 
$\gamma, p_1,\dots,p_\ell,q_1,\dots,q_\ell$ holds true:
\begin{align*}
\left[\gamma^g p_i^e q_i^f\right] \Ch^{(\alpha)}_\pi \left( \sqrt{\alpha}P\times \frac{1}{\sqrt{\alpha}}Q \right) 
&=
\left[\gamma^g p_i^e q_i^f\right] \newCh^{(\alpha)}_\pi\left(\sqrt{\alpha}P\times \frac{1}{\sqrt{\alpha}}Q\right),
\end{align*}
which finishes the proof.
\end{proof}

\section{Support for the conjectures: special values of $\alpha$}
\label{sec:special_alpha}

\subsection{Reformulation of the results from \cite{FeraySniady2011}}
\label{SubsectReformulationFS11}
The purpose of this paragraph is to explain how
Equations~\eqref{eq:alpha=2} and \eqref{eq:alpha=1/2} can be obtained
easily from the results of \cite{FeraySniady2011}, even if the presentation
there is a little bit different.
We use boldface characters for the notation of \cite{FeraySniady2011}.

Firstly,  note the difference in the notation and in the normalization:
\[\Ch_\pi^{(2)}(\lambda) = 
\left(\frac{1}{\sqrt{2}}\right)^{|\pi|-\ell(\pi)} \bm{\varSigma_\pi^{(2)}}. \]
Note also that the roles of black and white vertices in the definition
of $N_G$ are inverted.
Hence \cite[Theorem 5.2]{FeraySniady2011} with the notation 
of the present paper takes the form
\[\Ch_\pi^{(2)}(\lambda)=\left(\frac{1}{\sqrt{2}}\right)^{|\pi|-\ell(\pi)}
\frac{(-1)^{|\pi|}}{2^{\ell(\pi)}} \sum_M (-2)^{|\V_\circ(M)|}\ N^{(1)}_M,\]
where the sum runs over the maps of face-type $\pi$.
This is clearly the same as Equation \eqref{eq:alpha=2}.

Equation \eqref{eq:alpha=1/2} is deduced directly
from \eqref{eq:alpha=2} using the duality relation 
\eqref{EqDuality} and the fact that 
\[N^{(\alpha)}_G(\lambda')=N^{(1/\alpha)}_{G'}(\lambda),\]
where $G'$ is obtained from $G$ by inverting the colors of the vertices.

\subsection{The special cases $\alpha=\frac{1}{2}$ and $\alpha=2$}

\begin{theorem}
\label{thm:special-values-are-ok}
The answer for \cref{conj:precise} is positive for $\alpha=\frac{1}{2}$ and $\alpha=2$.
\end{theorem}
\begin{proof}
The condition $\alpha \in \left\{\frac{1}{2},2\right\}$ is
equivalent to $\gamma^2=\frac{1}{2}$.
This implies, by the same case analysis as in the proof of \cref{lem:how-twisted},
that for any edge $E$ of an arbitrary map $M$
\[ \weight_{M,E} = \gamma^{\big[|\F(M)|-|\V(M)|\big]-
\big[|\F(M\setminus E)|-|\V(M\setminus E)|\big]+1}. \]
For an arbitrary history $\prec$, 
the exponents of $\gamma$ in the product \eqref{eq:weight-map-order} form a telescopic sum, 
thus
\[ \weight_{M,\prec} = \gamma^{|\F(M)|-|\V(M)|+|\E(M)|}=
\gamma^{\ell(\pi)-|\V(M)|+|\pi|},\]
where $\pi$ is the face-type of $M$;
in particular this expression does not depend on the choice of the history,
hence
\[ \weight_{M} = \gamma^{\ell(\pi)-|\V(M)|+|\pi|}.\]

One can thus easily check that $\newCh_\pi^{(2)}$, respectively $\newCh_\pi^{(1/2)}$
coincides with \eqref{eq:alpha=2}, respectively \eqref{eq:alpha=1/2}.
\end{proof}

\section{Link with a positivity conjecture of Lassalle}
\label{sec:link_Lassalle}

In this section we will often use the following parameter introduced by Lassalle:
\[\beta := \alpha - 1.\]

\subsection{Statement of Lassalle's conjecture}

Lassalle stated the following conjecture.
\begin{conjecture}[{\cite[Conjecture 1]{Lassalle2008a}}]
\label{conj:lassalle-stanley}
Let $\pi$ be a partition such that $m_1(\pi) = 0$. Then
$(-1)^{|\pi|}\bm{\vartheta}_{\pi \cup 1^{n-|\pi|}}^{P \times Q}(\alpha)$ 
is a polynomial in $(P, -Q, \beta)$ with non-negative integer coefficients.
\end{conjecture}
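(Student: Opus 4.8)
The statement is Lassalle's conjecture, which we do not expect to settle in general; the plan is therefore to prove its \emph{positivity} part as a consequence of Main Conjecture~\ref{conj:main}, together with the unconditional companion result that $(-1)^{|\pi|}\,\alpha^{(|\pi|-\ell(\pi))/2}\,\newCh_\pi^{(\alpha)}(P\times Q)$ is a polynomial in $(P,-Q,\beta)$ with non-negative rational coefficients (which settles Conjecture~\ref{conj:lassalle-stanley}, apart from integrality, whenever the equality $\Ch_\pi^{(\alpha)}=\newCh_\pi^{(\alpha)}$ is known --- e.g.\ for a single rectangle by Theorem~\ref{thm:rectangular-ok}). For the unconditional part one simply replaces ``the weight $\vagueweight_M$ of Main Conjecture~\ref{conj:main}'' below by $\weight_M$, the only properties used being those guaranteed, respectively, by hypothesis and by Lemma~\ref{lem:how-twisted}.

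First I would substitute $\lambda=P\times Q$ into the formula of Main Conjecture~\ref{conj:main}, pass from $\Ch$ to $\bm{\vartheta}$ via the normalization \eqref{eq:lassalle-normalization-character}, and set $\gamma=(1-\alpha)/\sqrt{\alpha}=-\beta/\sqrt{\alpha}$. Writing $\vagueweight_M(\gamma)=\sum_{j\ge 0}a_{M,j}\,\gamma^{j}$ with $a_{M,j}\in\QQ_{\ge 0}$ supported on those $j\le \Moeb(M)$ with $j\equiv\chi(M)\pmod 2$ (first bullet of the Main Conjecture), one obtains
\[
(-1)^{|\pi|}\,\bm{\vartheta}^{P\times Q}_{\pi\cup 1^{n-|\pi|}}(\alpha)
=(-1)^{|\pi|+\ell(\pi)}\sum_{M,\,j}(-1)^{|V_\bullet(M)|+j}\,a_{M,j}\,\beta^{\,j}\,\alpha^{\,e(M,j)}\,N_M(P\times Q),
\]
the sum being over maps $M$ of face-type $\pi$, where $e(M,j):=\tfrac12\bigl(|\pi|-\ell(\pi)+|V_\circ(M)|-|V_\bullet(M)|-j\bigr)$.

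The crux is the bookkeeping with $e(M,j)$. Using $|\E(M)|=|\pi|$, $|F(M)|=\ell(\pi)$ and $\chi(M)=|V(M)|-|\E(M)|+|F(M)|$ one rewrites $e(M,j)=|V_\circ(M)|-\tfrac12\bigl(\chi(M)+j\bigr)$; the parity restriction $j\equiv\chi(M)$ makes this an integer, and the degree bound $j\le\Moeb(M)=2c(M)-\chi(M)$ (where $c(M)$ denotes the number of connected components) together with the fact that each connected component of $M$ contains at least one white vertex --- being nonempty it contains an edge-side, which lies in a pair of $\WC$ --- gives $e(M,j)\ge|V_\circ(M)|-c(M)\ge 0$. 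Hence $\alpha^{e(M,j)}=(1+\beta)^{e(M,j)}$ expands with non-negative integer coefficients in $\beta$. I would then invoke the multirectangular refinement of Lemma~\ref{lem:NGRectangular}, i.e.\ the known positivity of the functions $N_G$ on multirectangular Young diagrams (see \cite{FeraySniady2011a}): $(-1)^{|V_\circ(M)|}N_M(P\times Q)$ is a polynomial in $P$ and $-Q$ with non-negative integer coefficients. A final sign check closes the argument: since $|V(M)|=\chi(M)+|\pi|-\ell(\pi)$ and $j\equiv\chi(M)\pmod 2$, the sign collected on each term is
\[
(-1)^{|\pi|+\ell(\pi)}\cdot(-1)^{|V_\bullet(M)|+j}\cdot(-1)^{|V_\circ(M)|}=(-1)^{\,|\pi|+\ell(\pi)+|V(M)|+j}=(-1)^{\,2|\pi|+\chi(M)+j}=+1,
\]
the last exponent being even because $\chi(M)\equiv j\pmod 2$. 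Therefore every term of the expansion of $(-1)^{|\pi|}\bm{\vartheta}^{P\times Q}_{\pi\cup 1^{n-|\pi|}}(\alpha)$ in the variables $(P,-Q,\beta)$ has a non-negative rational coefficient.

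I expect two points to be the real issues. First, this is only a \emph{conditional} result: it rests on the open Main Conjecture, and unconditionally it yields Lassalle's positivity only where $\Ch=\newCh$ has already been established. Second --- and this is a genuine limitation of the method rather than a gap in the argument --- the coefficients $a_{M,j}$ are only non-negative \emph{rational} numbers (for $\weight_M$ they are $1/|\pi|!$-averages over histories), so this route cannot deliver the \emph{integrality} part of Conjecture~\ref{conj:lassalle-stanley}; that would require substantially new input. Everything else --- the positivity of $N_G$ on multirectangular diagrams, the computation of $e(M,j)$, and the sign cancellation --- is routine.
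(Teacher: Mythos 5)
Your proposal correctly recognizes that Conjecture~\ref{conj:lassalle-stanley} is genuinely open, and that what can be established is the conditional positivity-with-rational-coefficients statement (the paper's Theorem~\ref{theo:conseq1} and Corollary~\ref{cor:lassalle-for-stanley}). Your argument is essentially the paper's: expand $\vagueweight_M$ in powers of $\gamma=-\beta/\sqrt{\alpha}$, use the multirectangular formula for $N_M$, and verify that the resulting power of $\alpha$ is a non-negative integer via the degree bound $j\le\Moeb(M)$, the parity $j\equiv\chi(M)\pmod 2$, and the inequality $c(M)\le|V_\circ(M)|$; the computations of $e(M,j)$ and the final sign cancellation match (and are in fact spelled out a bit more explicitly than in the paper, which elides the sign check).
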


In the following we will prove (in \cref{cor:lassalle-for-stanley})
that our Main Conjecture \ref{conj:main} implies
a weaker version of \cref{conj:lassalle-stanley},
namely that the coefficients are non-negative \emph{rational} numbers.

\subsection{Positivity in multirectangular coordinates}

Our first step is the following statement.
 \begin{theorem}
 \label{theo:conseq1}
 Let us assume that  
 \cref{conj:main} holds true. Then $(-1)^{|\pi|}\Ch^{(\alpha)}_\pi(P \times Q)$ is a polynomial in
 the variables $(P/\sqrt{\alpha},-\sqrt{\alpha}Q,-\gamma)$ with non-negative
 rational coefficients.
 \end{theorem}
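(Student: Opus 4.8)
The plan is to feed the multirectangular evaluation of the functions $N_G$ into the formula of Main Conjecture~\ref{conj:main} and to track the signs carefully. The starting point is the explicit formula: if $P=(p_1,\dots,p_\ell)$ and $Q=(q_1,\dots,q_\ell)$ with $q_1\ge\dots\ge q_\ell$, then for any bipartite graph $G$ all of whose white vertices have at least one neighbour,
\[N_G(P\times Q)=\sum_{c\colon V_\bullet(G)\to[\ell]}\ \prod_{b\in V_\bullet(G)}p_{c(b)}\ \prod_{w\in V_\circ(G)}q_{\max\{c(b)\,:\,b\text{ adjacent to }w\}},\]
the sum ranging over all colourings of the black vertices by block indices; this is (the relevant case of) \cite[Lemma~3.9]{FeraySniady2011}, of which Lemma~\ref{lem:NGRectangular} is the case $\ell=1$. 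The hypothesis on white vertices is harmless, since a vertex of a map is a polygon with at least one incident edge and therefore always has a neighbour. In particular $N_G(P\times Q)$ is a polynomial in $P$ and $Q$ each of whose monomials is, with coefficient $1$, a product of exactly $|V_\bullet(G)|$ of the $p_i$ and exactly $|V_\circ(G)|$ of the $q_j$.

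Next I would substitute this into the formula of Main Conjecture~\ref{conj:main}, multiplied by $(-1)^{|\pi|}$, obtaining, with $G$ the underlying graph of $M$ and $m_c(w):=\max\{c(b):b\text{ adjacent to }w\}$,
\[(-1)^{|\pi|}\Ch_\pi^{(\alpha)}(P\times Q)=(-1)^{|\pi|+\ell(\pi)}\sum_{M}\sum_{c}\left(-\frac{1}{\sqrt{\alpha}}\right)^{|V_\bullet(M)|}\left(\sqrt{\alpha}\right)^{|V_\circ(M)|}\vagueweight_M(\gamma)\prod_{b}p_{c(b)}\prod_{w}q_{m_c(w)},\]
the outer sum running over maps $M$ of face-type $\pi$. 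The key bookkeeping step is to split the signs and powers of $\sqrt{\alpha}$ across the vertices: each black vertex contributes $-p_{c(b)}/\sqrt{\alpha}=(-1)\cdot\bigl(p_{c(b)}/\sqrt{\alpha}\bigr)$ and each white vertex contributes $\sqrt{\alpha}\,q_{m_c(w)}=(-1)\cdot\bigl(-\sqrt{\alpha}\,q_{m_c(w)}\bigr)$, so that the product of all vertex contributions equals $(-1)^{|V(M)|}$ times a monomial in the variables $P/\sqrt{\alpha}$ and $-\sqrt{\alpha}Q$. Hence the $(M,c)$-summand equals
\[(-1)^{|\pi|+\ell(\pi)+|V(M)|}\,\vagueweight_M(\gamma)\ \prod_b\frac{p_{c(b)}}{\sqrt{\alpha}}\ \prod_w\left(-\sqrt{\alpha}\,q_{m_c(w)}\right).\]

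The final step absorbs the remaining sign into the weight using the parity clause of Main Conjecture~\ref{conj:main}. Since a map of face-type $\pi$ has $|\E(M)|=|\pi|$ edges and $|F(M)|=\ell(\pi)$ faces, one has $\chi(M)=|V(M)|-|\pi|+\ell(\pi)$, whence $(-1)^{|\pi|+\ell(\pi)+|V(M)|}=(-1)^{\chi(M)}$. If $\chi(M)$ is even then $\vagueweight_M$ is an even polynomial and $(-1)^{\chi(M)}\vagueweight_M(\gamma)=\vagueweight_M(\gamma)=\vagueweight_M(-\gamma)$; if $\chi(M)$ is odd then $\vagueweight_M$ is an odd polynomial and $(-1)^{\chi(M)}\vagueweight_M(\gamma)=-\vagueweight_M(\gamma)=\vagueweight_M(-\gamma)$. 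In both cases $(-1)^{\chi(M)}\vagueweight_M(\gamma)=\vagueweight_M(-\gamma)$, which is a polynomial in $-\gamma$ with non-negative rational coefficients. Therefore
\[(-1)^{|\pi|}\Ch_\pi^{(\alpha)}(P\times Q)=\sum_{M}\sum_{c}\vagueweight_M(-\gamma)\ \prod_b\frac{p_{c(b)}}{\sqrt{\alpha}}\ \prod_w\left(-\sqrt{\alpha}\,q_{m_c(w)}\right)\]
is a finite sum of monomials in the variables $(P/\sqrt{\alpha},\,-\sqrt{\alpha}Q,\,-\gamma)$ with non-negative rational coefficients, which is exactly the claim. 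The proof is short once Main Conjecture~\ref{conj:main} is granted; the one point that requires care — and the only place the parity clause of the Main Conjecture enters — is matching the global sign $(-1)^{\chi(M)}$ against the parity of $\vagueweight_M$, so I would organise the write-up around that identity.
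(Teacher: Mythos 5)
Your proof is correct and follows essentially the same route as the paper: substitute the multirectangular evaluation of $N_M$ from \cite[Lemma 3.9]{FeraySniady2011}, distribute the signs over vertices to produce a factor $(-1)^{|V(M)|}$, observe $(-1)^{|\pi|+\ell(\pi)+|V(M)|}=(-1)^{\chi(M)}$, and use the parity clause of Main Conjecture~\ref{conj:main} to rewrite $(-1)^{\chi(M)}\vagueweight_M(\gamma)=\vagueweight_M(-\gamma)$. You are more explicit than the paper about the harmlessness of the degree hypothesis in the cited lemma and about the sign-absorption identity, but the argument is the same.
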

 
\begin{proof}
For a Young diagram $P\times Q$ given in the multirectangular coordinates, 
the number of  embeddings $N^{(1)}_M(P\times Q)$
takes a particularly simple form (see \cite[Lemma 3.9]{FeraySniady2011} where the notations are slightly different), 
for this reason, \cref{eq:conjectural-mysterious-formula} would imply that
\begin{multline*}
\Ch^{(\alpha)}_\pi(P \times Q) = (-1)^{\ell(\pi)} \sum_{M}\
\vagueweight_M(\gamma)\ \times \\
\left[
\sum_{\varphi\colon \V_\bullet(M) \to \N^\star}\
\prod_{l \in \V_\bullet(M)}\left( \frac{-p_{\varphi(l)}}{\sqrt{\alpha}}\right) \cdot
\prod_{l' \in \V_\circ(M)}\left(\sqrt{\alpha}\ q_{\psi(l')}\right)
\right], 
\end{multline*}
where the first sum is over all bicolored maps $M$ of the face-type $\pi$ and
$\psi(l')$ is defined as the maximum of $\varphi(l)$ over all white neighbors $l$ of the black vertex $l'$. 

The quantity $\vagueweight_M(\gamma)$ is a polynomial in $\gamma$ with non-negative rational
 coefficients of the same parity as the Euler characteristic $\chi(M)$,
 that is the same parity as $|\pi|+\ell(\pi)+|\V(M)|$.
  Rewriting the equation above as
\begin{multline*}
    (-1)^{|\pi|} \Ch^{(\alpha)}_\pi(P \times Q) = \sum_{M}\
\vagueweight_M(-\gamma)\ \times \\
\left[
\sum_{\varphi\colon \V_\bullet(M) \to \N^\star}\
\prod_{l \in \V_\bullet(M)}\left( \frac{p_{\varphi(l)}}{\sqrt{\alpha}}\right) \cdot
\prod_{l' \in \V_\circ(M)}\left(-\sqrt{\alpha}\ q_{\psi(l')}\right)
\right]
\end{multline*}
finishes the proof.
\end{proof}
 
 \begin{corollary}
\label{cor:lassalle-for-stanley}
Let us assume that 
\cref{conj:main} holds true.
Let $\pi$ be an arbitrary partition. Then
$(-1)^{|\pi|}\bm{\vartheta}_{\pi \cup 1^{n-|\pi|}}^{P \times Q}(\alpha)$ is a polynomial in $(P, -Q, \beta)$ with non-negative rational coefficients. 
\end{corollary}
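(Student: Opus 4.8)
The plan is to deduce the statement from Theorem~\ref{theo:conseq1} together with the normalization relation~\eqref{eq:lassalle-normalization-character}, which reads
\[(-1)^{|\pi|}\bm{\vartheta}_{\pi \cup 1^{n-|\pi|}}^{P \times Q}(\alpha) = \alpha^{\frac{|\pi|-\ell(\pi)}{2}}\,(-1)^{|\pi|}\,\Ch^{(\alpha)}_\pi(P\times Q).\]
First I would insert into the right-hand side the explicit expansion of $(-1)^{|\pi|}\Ch^{(\alpha)}_\pi(P\times Q)$ established in the proof of Theorem~\ref{theo:conseq1}: under Main Conjecture~\ref{conj:main}, this quantity is a sum, over all maps $M$ of face-type $\pi$ and all embeddings $\varphi$, of $\vagueweight_M(-\gamma)$ times $\prod_{l\in V_\bullet(M)}\bigl(p_{\varphi(l)}/\sqrt{\alpha}\bigr)\prod_{l'\in V_\circ(M)}\bigl(-\sqrt{\alpha}\,q_{\psi(l')}\bigr)$. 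Writing $\vagueweight_M(\gamma)=\sum_{c}w_{M,c}\,\gamma^{c}$ with $w_{M,c}\in\QQ_{\ge0}$ (this non-negativity being exactly the first bullet of Main Conjecture~\ref{conj:main}) and using $-\gamma=\beta/\sqrt\alpha$, the contribution of a triple $(M,c,\varphi)$ to $(-1)^{|\pi|}\bm{\vartheta}_{\pi \cup 1^{n-|\pi|}}^{P \times Q}(\alpha)$ becomes $w_{M,c}\,\beta^{c}\,\alpha^{m(M,c)}\,\prod_{l\in V_\bullet(M)}p_{\varphi(l)}\,\prod_{l'\in V_\circ(M)}(-q_{\psi(l')})$, where $m(M,c):=\tfrac12\bigl(|\pi|-\ell(\pi)-c-|V_\bullet(M)|+|V_\circ(M)|\bigr)$.

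The key step will be to show that $m(M,c)$ is a non-negative integer whenever $w_{M,c}\neq0$. Using $|\pi|=|\E(M)|$, $\ell(\pi)=|F(M)|$, the definition $\chi(M)=|V(M)|-|\E(M)|+|F(M)|$, and $\Moeb(M)=2k-\chi(M)$ with $k$ the number of connected components of $M$, one rewrites the exponent as
\[m(M,c)=|V_\circ(M)|-k+\frac{\Moeb(M)-c}{2}.\]
Now the first bullet of Main Conjecture~\ref{conj:main} does the rest: it gives $\deg\vagueweight_M\le\Moeb(M)$, so $w_{M,c}\neq0$ forces $c\le\Moeb(M)$, and it gives that $\vagueweight_M$ is an even (respectively, odd) polynomial exactly when $\chi(M)$ is even (respectively, odd), so $c\equiv\chi(M)\equiv\Moeb(M)\pmod 2$; hence $\tfrac12(\Moeb(M)-c)\in\mathbb{Z}_{\ge0}$, and since each connected component of $M$ contains at least one white vertex, $m(M,c)\ge|V_\circ(M)|-k\ge0$.

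Finally I would assemble the pieces. As $\alpha=1+\beta$ and $m(M,c)\in\mathbb{Z}_{\ge0}$, the factor $\alpha^{m(M,c)}=(1+\beta)^{m(M,c)}$ expands with non-negative integer coefficients in $\beta$, so the contribution of each triple $(M,c,\varphi)$ is a non-negative rational combination of monomials in the variables $\{p_i\}$, $\{-q_j\}$, $\beta$; summing over all triples preserves this property, yielding that $(-1)^{|\pi|}\bm{\vartheta}_{\pi \cup 1^{n-|\pi|}}^{P \times Q}(\alpha)$ is a polynomial in $(P,-Q,\beta)$ with non-negative rational coefficients. I expect the only genuinely delicate point to be the lower bound $m(M,c)\ge0$, i.e.\ making the half-integer powers of $\sqrt\alpha$ disappear in favour of non-negative integer powers of $\alpha$; everything else is the normalization identity of~\eqref{eq:lassalle-normalization-character} and routine bookkeeping.
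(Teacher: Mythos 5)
Your proof is correct and follows essentially the same route as the paper: both insert the normalization relation \eqref{eq:lassalle-normalization-character} into the multirectangular expansion from the proof of Theorem~\ref{theo:conseq1}, and then use the degree bound $\deg\vagueweight_M\le\Moeb(M)$, the parity condition from Main Conjecture~\ref{conj:main}, and the inequality (number of connected components) $\le |V_\circ(M)|$ to conclude that the residual power of $\sqrt\alpha$ is a non-negative \emph{even} integer, whence a polynomial in $\beta$ with non-negative coefficients. Your write-up is simply more explicit about the exponent bookkeeping that the paper leaves to the reader; note also that the paper's proof labels the degree bound $\chi(M)$ while writing the formula for $\Moeb(M)$ — a harmless slip that your version avoids.
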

 
 \begin{proof}
Using \eqref{eq:lassalle-normalization-character}
we obtain:
 \begin{multline*}
 (-1)^{|\pi|}\bm{\vartheta}_{\pi \cup 1^{n-|\pi|}}^{P \times Q}(\alpha) =  
\sum_{M}\ 
\sqrt{\alpha}^{2|\V_\circ(M)|+|\pi|-\ell(\pi)  - |\V(M)|}
\vagueweight_M(-\gamma)\\
  \times \left[
 \sum_{\varphi\colon \V_\bullet(M) \to \N^\star}\
 \prod_{l \in \V_\bullet(M)} p_{\varphi(l)} \cdot
 \prod_{l' \in \V_\circ(M)}\left(- q_{\psi(l')}\right)
 \right].
 \end{multline*}
Recall that
$\vagueweight_M(\gamma)$ is a polynomial in $\gamma$ of degree at most 
 \[\chi(M)= 2 (\text{number of connected components of $M$}) - \ell(\pi) + |\pi| - |\V(M)|\] 
and with the same parity as $\chi(M)$.
The number of connected components of $M$ is at most equal to the number of white vertices, and since $-\gamma = \frac{\beta}{\sqrt{\alpha}}$ and $\alpha= \beta+1$ we have that
 \[ \sqrt{\alpha}^{2|\V_\circ(M)|+|\pi|-\ell(\pi)  - |\V(M)|}
 \vagueweight_M(-\gamma) \]
 is a polynomial in $\beta$ with non-negative rational coefficients, which concludes the proof.
\end{proof}

\begin{remark}
    If \cref{conj:main} is true with a weight $\vagueweight_M$ with integer coefficients 
    (as polynomial in $\gamma$),
    then it also implies the integrality statements in \cite[Conjecture 1]{Lassalle2008a}
    and \cite[Conjecture 1.2]{Lassalle2009}.
\end{remark}

\section{Computer exploration and the counterexample}
\label{sec:Conj_Is_False}

\subsection{Counterexample $\pi=(9)$}

For $\pi=(9)$ a computer calculation shows that
\begin{multline}
\label{eq:counterexample}
\newCh^{(\alpha)}_{(9)}(P\times Q)-
\Ch^{(\alpha)}_{(9)}(P\times Q)= \\
\frac{41}{70} (2 \gamma^2-1) \sum_{i<j<k} p_{i} p_{j} p_{k} (q_k-q_j)(q_i-q_j) q_{k}
\end{multline}
which might be non-zero for multirectangular Young diagrams consisting of at least $\ell\geq 3$ rectangles.
It is worth pointing out that this is \emph{not} a counterexample for \cref{conj:2-rectangle}. 
However, it shows that the answer to \cref{conj:precise} might be negative for some specific choices of
$\alpha$ and $\lambda$.

For \eqref{eq:counterexample}, 
the quantity $\newCh^{(\alpha)}_\pi(P\times Q)$ was computed using the
very definition given in this article.
Computing $\Ch^{(\alpha)}_\pi(P \times Q)$ is a bit harder (while shorter in practice):
we used some data made available by Lassalle \cite{LassalleData},
that express it in terms of the free cumulants
(Lassalle gave an algorithm to do this computation \cite[Section 9]{Lassalle2009},
but as his data was made available, we did not implement it again).
Then the free cumulant $R_k(P \times Q)$ can be computed
using the recursive structure of bicolored planted plane trees 
(see \cite[Equations (10), (11) and (12)]{Rattan2007}).

The calculation of \eqref{eq:counterexample} took a week of computer time.
Finding this counterexample was only possible because the theoretical results
in this paper
and some additional tricks allow to reduce the computational complexity
(the naive algorithm which lists all maps with all histories would have to consider 
$17!! \cdot 9!\approx 1.25\times 10^{13}$ cases).
An analogous calculation for $\pi=(10)$ would be, for the moment, rather challenging.

\subsection{Another weight}
We have also been testing numerically
another candidate for the weight in \cref{conj:main}.
The idea was to define $\weight'_M$ as $\weight_{M,<}$ for some specific
history $<$.
We chose this history as follows:
\begin{itemize}
    \item first erase the edge containing the edge-side with
        the smallest label (denote it $s_0$);
    \item then remove the edges containing 
        \[\E \circ \WC (s_0), (\E \circ \WC)^2(s_0), \cdots \]
        until you reach $s_0$ again 
        (here the pairings $\E$ and $\WC$ are viewed as fixpoint-free involutions);
    \item then start again with the edge-side with the smallest label among the
        remaining edges.
\end{itemize}
This definition was inspired by the work of La Croix on $b$-Conjecture
\cite[Section 4.1]{La2009}.
This new candidate weight $\weight'_M$ is much easier to evaluate as we do not need to consider
all possible histories.

This new weight $\weight'_M$ gives the correct answer in the cases 
$\alpha\in\left\{1/2,2\right\}$, just as $\weight_M$ does (\cref{thm:special-values-are-ok}).
Indeed, we have proved that in this case $\weight_{M,\prec}$
does not depend on the choice of the history $\prec$, thus
any specific choice of history (or any mean over some set of histories)
works fine.

We have observed numerically that this weight is a solution to 
\cref{conj:main} for any $\pi$ of size at most $8$ as well as for $\pi=(9)$,
but \textbf{not} for $\pi=(10)$ and $\pi=(5,4)$.

Also, this weight seems to work for rectangular shapes,
but we are unable to prove it.
Numerical data suggests that it also
works for a superposition of (at most) two rectangles 
$\lambda=(p_1,p_2) \times (q_1,q_2)$.

\section*{Acknowledgments}

We thank Michael La Croix for a very interesting discussion concerning $b$-Conjecture.

We also thank Michel Lassalle for 
making his data available on his web-page \cite{LassalleData}.
Computer exploration for this paper was partly driven with
the open-source
mathematical software \texttt{Sage}~\cite{sage} and its algebraic
combinatorics features developed by the \texttt{Sage-Combinat}
community~\cite{Sage-Combinat}.

M.D.'s research has been supported by a grant of \emph{Narodowe Centrum Nauki} (2011/03/N/ST1/00117).
In the initial phase of research, P.\'S.~was a holder of a fellowship of \emph{Alexander von Humboldt-Stiftung}.
P.\'S.'s research has been supported by a grant of \emph{Deutsche Forschungsgemeinschaft} (SN 101/1-1).
V.F.'s research is partially supported by \emph{ANR} grant \textsc{psyco} (ANR-11-JS02-001).


\newcommand{\etalchar}[1]{$^{#1}$}
\def\cprime{$'$}

\end{document}